\documentclass[reqno]{amsart}

\usepackage{hyperref}
\usepackage{amsmath}
\usepackage{amssymb}
\usepackage{amsfonts}
\usepackage{amsthm}
\usepackage{latexsym}
\usepackage{esint}
\usepackage{mathtools}
\usepackage{mathrsfs}
\usepackage{graphicx}
\usepackage{wrapfig}
\usepackage{color}
\usepackage{bm}
\usepackage[top=1in, bottom=1.25in, left=1.10in, right=1.10in]{geometry}
\usepackage{todonotes}
\usepackage{stackengine}
\allowdisplaybreaks

\newcommand\ocirc[1]{\ensurestackMath{\stackon[1pt]{#1}{\mkern2mu\circ}}}

\newtheorem{theorem}{Theorem}[section]
\newtheorem{lemma}[theorem]{Lemma}
\newtheorem{proposition}[theorem]{Proposition}

\theoremstyle{definition}
\newtheorem{definition}[theorem]{Definition}

\theoremstyle{remark}
\newtheorem{remark}[theorem]{Remark}

\numberwithin{equation}{section}

\newcommand{\bx}{\mathbf{x}}
\newcommand{\by}{{y}}

\newcommand{\bu}{\mathbf{u}}

\newcommand{\bff}{\mathbf{f}}

\newcommand{\bv}{\mathbf{v}}

\newcommand{\bn}{\mathbf{n}}
\newcommand{\be}{\mathbf{e}}
\newcommand{\bd}{\mathbf{d}}
\newcommand{\bT}{\mathbb{T}}

\newcommand{\bfphi}{\boldsymbol{\phi}}

\newcommand{\dy}{\, \mathrm{d}{y}}
\newcommand{\dd}{\,\mathrm{d}}

\newcommand{\dx}{\, \mathrm{d} \mathbf{x}}

\newcommand{\dt}{\, \mathrm{d}t}
\newcommand{\ds}{\, \mathrm{d}s}

\newcommand{\Div}{\mathrm{div}_{\mathbf{x}}}
\newcommand{\divx}{\mathrm{div}_{\mathbf{x}}}

\newcommand{\nabx}{\nabla_{\mathbf{x}}}
\newcommand{\naby}{\partial_y}

\newcommand{\Delx}{\Delta_{\mathbf{x}}}

\newcommand{\R}{\mathbb{R}}

\newcommand{\Oeta}{\Omega_{\eta}}

\newcommand{\bfvarphi}{\bm{\varphi}}

\begin{document}

\title[Flexible Ericksen--Leslie liquid crystal]
{The simplified 2D Ericksen--Leslie liquid crystal model interacting with a 1D flexible shell}



\author{Prince Romeo Mensah}
\address{Faculty of Mathematics,
University of Duisburg-Essen,
Thea-Leymann-Stra{\ss}e 9,
45127 Essen, Germany}

\subjclass[2010]{76Nxx; 76N10; 35Q30 ; 35Q84; 82D60}

\date{\today}


\keywords{Navier--Stokes equation,   Fluid-Structure interaction, Liquid crystal, Ericksen-Leslie, Ginzburg-Landau}

\begin{abstract}
%
%
We consider the evolution and interaction of a $2$-dimensional nematic liquid crystal of Ericksen--Leslie type  within a $1$-dimensional flexible viscoelastic structure.
This is a fully macroscopic model in which the nematic liquid crystal is modelled by the simplified Ericksen--Leslie system with Ginzburg--Landau approximation. The liquid crystal is contained in a thin viscoelastic shell of arbitrary reference configuration that evolves with respect to the forces exerted by the liquid crystal. 
Barring any degeneracies in the shell, we construct a global weak solution for the coupled system. We then show that any family of such weak solutions that are parametrized by the Ginzburg--Landau coefficient, converges to a weak solution of the original simplified Ericksen--Leslie system without the Ginzburg--Landau term.
\end{abstract}

\maketitle
\section{Introduction}  
The rigorous  mathematical analysis of complex fluid with flexible structures is a new field in the analysis of partial differential equations that arises from fluid mechanics. This typically involve a three-state analysis of the interactions between a solvent, a solute, and a structure, where a mixture of the solvent and the solute evolve within and interact with the structure. 
The resulting multiscale problem require tools from two hitherto separate thriving fields in continuum mechanics, i.e. analysis of complex fluids  and  fluid-structure interaction problems. An initial drawback to merging these two fields comes from their very modeling. Whereas a typical fluid-structure interaction problem will involve the
interplay between the macroscopic description of a solvent and a structure, a complex fluid may deal with the interaction of the macroscopic description of a solvent with a solute on either the microscopic, mesoscopic, or macroscopic scale. 
Moreover, the high molecular weight of most complex
fluids (as compared to a Newtonian or Ideal 
fluids, say) makes it comparatively solid-like and thus affects the surface forces at the boundary.

In trying to further understand these problems and find solutions to them, the rigorous analysis of complex fluid-structure interaction problems was recently initiated in \cite{breit2021incompressible}. Here, the complex fluid is a dilute polymeric fluid consisting of a macroscopic Newtonian solvent and a solute that is described on the mesoscopic scale by a Fokker--Planck equation. This mixture then evolves within and interact with a flexible structure of Koiter type. A weak solution to the underlying system which exists until the Koiter energy degenerates or the flexible shell approaches a self-intersection is then constructed. The corresponding local strong solutions were then constructed in \cite{breit2025existence}. As it turns out, modeling the solute in a polymeric fluid on the macroscopic level offers more analytical advantages as has been shown in \cite{mensah2025equilibration, mensah2025strong, mensah2025vanishing, mensah2025weak}. Ironically,  the construction of weak solutions in the 3D-3D-2D settings for macroscopic 3D solutes suddenly appears out of reach although this is known on the mesoscopic scale.
 
Current technological advancements in flexible liquid crystal display (LCD) phones, computers and televisions have necessitated the analysis of liquid crystals in flexible time-dependent domains. For this reason, we consider in this work, a related three-state problem consisting of the interaction of  a nematic liquid crystal within a thin viscoelastic structure. 
We consider the simplest model in the continuum theory of nematic liquid crystals proposed by Lin \cite{lin1989nonlinear}. Besides being simple, it has the desirable property of retaining the nonlinear structure and other essential mathematical properties of the original Ericksen--Leslie model \cite{ericksen1961conservation, ericksen1962hydrostatic, leslie1979theory}. When the domain is fixed, Lin and Liu analysed the mathematical properties of this simplified model in their seminal work \cite{lin1995nonparabolic}. This has since opened the door for more works \cite{lin2010liquid, lin2000existence, hong2011global, lin2016global, wen2011solutions, xu2012global, huang2014regularity, huang2012blow} on different notions of solutions and several variants of the simplified model.

In keeping up with simplified models that maintain essential physical and mathematical forms, we model the liquid crystal subsystem by the simplified Ericksen--Leslie model by Lin and model the flexible structure by the linear shell equation. These two subsystems are suitably coupled at their interface so that their velocities matches.

With regards to liquid-crystals that are interacting with a body, we only know of three results \cite{binz2024interaction, brandt2025dynamics, geng2024global} all of which  involve  a rigid body immersed in a liquid crystal. Motivated by the formation of liquid crystal colloids, the work \cite{binz2024interaction} based its model off of the original simplified Ericksen--Leslie system with the colloidal particles viewed as rigid bodies. They construct local strong solutions to the underlying problem and also showed that 
this strong solution is global when the initial data is close to constant equilibria. A similar result is then replicated in \cite{brandt2025dynamics} for the general $Q$-tensor model.  These two results are, however, preceded by \cite{geng2024global} where global weak solutions are constructed for the interaction of a rigid body with liquid crystals described by the $Q$-tensor model.
The mathematical analysis of a liquid crystal with a flexible structure is completely open and is the subject of this work.
 
\subsection{Geometric setup and equations of motion}
The reference spatial domain  $\Omega \subset \mathbb{R}^2$ has a boundary $\partial\Omega\subset \mathbb{R}$ that may consist of a flexible part $\omega\subset\mathbb{R}$ and a rigid part $\Gamma\subset \mathbb{R}$. However, because the analysis at the rigid part is essentially trivial when compared to the fixed part, we shall identify the whole of $\partial \Omega$ with $\omega$ and endow it with periodic boundary conditions. Let $I:=(0,T)$ represent a time interval for a given constant $T>0$. The time-dependent  displacement of the structure is given by $\eta:\overline{I}\times\omega\rightarrow(-L,L)$ where $L>0$ is a fixed length of the tubular neighbourhood of $\partial\Omega$ given by
\begin{align*}
S_L:=\{\bx\in \mathbb{R}^2\,:\, \mathrm{dist}(\bx,\partial\Omega
)<L \}.
\end{align*}
For some $k\in\mathbb{N}$ large enough, we now assume that $\partial\Omega$  is parametrized by an injective mapping $\bm{\varphi}\in C^k(\omega;\mathbb{R}^2)$ with $\partial_{y } \bm{\varphi}\neq\bm{0}$.
The set $\partial{\Omega_{\eta}}$, given by 
\begin{align*}
\partial{\Omega_{\eta}}=\big\{\bm{\varphi}_{\eta}:=\bm{\varphi}(\by)+\bn(\by)\eta(t,\by)\, :\, t\in I, \by\in \omega\big\}
\end{align*}
then represents the boundary of the flexible (moving) domain at any instant of time $t\in I$ and with the vector $\bn$,
given by
\begin{align*}
\bn =\frac{\partial_{y}^\perp \bm{\varphi}   }{|\partial_{y } \bm{\varphi}   |}=
\frac{(-\partial_{y} \varphi_2,\partial_{y} \varphi_1)   }{\sqrt{(\partial_{y} \varphi_1)^2+(\partial_{y} \varphi_2)^2} },
\end{align*}
being the outward unit normal at the point $\by\in \omega$. 
We also let $\mathbf{n}_{\eta}:=\bn\circ \bm{\varphi}_{\eta}$ be the corresponding outward unit normal of $\partial{\Omega_{\eta}}$ at the spacetime point $\by\in \omega$ and $t\in I$. Then for $\alpha>0$ sufficiently small where $\alpha<L$, we note that $\bn_{\eta}$ is close to $\bn$ and $\bm{\varphi}_{\eta}$ is close to $\bm{\varphi}$. As a result,  it follows that 
\begin{align}
\label{eq:1705}
|\partial_{y } \bm{\varphi}_{\eta} | \neq0 \quad\text{ and } \quad
\bn\cdot \bn_{\eta}\neq 0
\end{align}
for $\by\in \omega$ and $t\in I$. Thus, in particular, there is no loss of strict positivity of the Jacobian determinant provided that $\Vert \eta\Vert_{L^\infty(I;L^{\infty}(\omega))}<\alpha<L$.

For the interior points, we  transform the  reference domain $\Omega$ into a time-dependent moving domain $\Omega_{\eta}$  whose state at time $t\in\overline{I}$ is given by
\begin{align*}
\Omega_{\eta}
 =\big\{
 \bm{\Psi}_{\eta(t)}(\bx):\, \bx \in \Omega 
  \big\}.
\end{align*}
Here,
\begin{align}
\label{Hanz}
\bm{\Psi}_{\eta(t)}(\bx)=
\begin{cases}
\bx+\bn(\by(\bx))\eta(t,\by(\bx))\phi(s(\bx))     & \quad \text{if } \mathrm{dist}(\bx,\partial\Omega)<L,\\
    \bx & \quad \text{elsewhere } 
  \end{cases}
\end{align}
is the Hanzawa transform with inverse $\bm{\Psi}_{-\eta(t)}$ and where for a point $\bx$ in the neighbourhood of $\partial\Omega$, the vector $\bn(\by(\bx))$ is the outward unit normal at the point $\by(\bx)=\mathrm{arg min}_{\by\in\omega}\vert\bx -\bm{\varphi}(\by)\vert$. Also, $s(\bx)=(\bx-\bm{\varphi}(\by(\bx)))\cdot\bn(\by(\bx))$ and $\phi\in C^\infty(\mathbb{R})$ is a cut-off function that is $\phi\equiv0$ in the neighbourhood of $-L$ and $\phi\equiv1$ in the neighbourhood of $0$. Note that $\bm{\Psi}_{\eta(t)}(\bx)$ can be rewritten as
\begin{align*}
\bm{\Psi}_{\eta(t)}(\bx)=
\begin{cases}
\bm{\varphi}(\by(\bx))+\bn(\by(\bx))[s(\bx)+\eta(t,\by(\bx))\phi(s(\bx)) ]    & \quad \text{if } \mathrm{dist}(\bx,\partial\Omega)<L,\\
    \bx & \quad \text{elsewhere. } 
  \end{cases}
\end{align*}
We also note that for the boundary point $\bm{\varphi}(\by)\in \partial\Omega\subset\R^2$, we have that
\begin{align*}
\big(\bm{\varphi}_\eta\circ\bm{\varphi}^{-1}\big)(\bm{\varphi}(\by))=\bm{\varphi}_\eta(\by)=\bm{\varphi}(\by)+\bn\eta\in \partial\Oeta.
\end{align*}
Therefore, $\bm{\varphi}_\eta\circ\bm{\varphi}^{-1}:\partial\Omega\rightarrow\partial\Oeta$ correspond to the Hanzawa transform $\bm{\Psi}_\eta:\overline{\Omega}\rightarrow\overline{\Oeta}$ restricted to the boundary, i.e., $\bm{\Psi}_\eta\vert_{\partial\Omega}=\bm{\varphi}_\eta\circ\bm{\varphi}^{-1}$. Consequently, in particular,
\begin{align}
\label{bdd3trans}
\bm{\varphi}_\eta=\bm{\Psi}_\eta\circ\bm{\varphi}&\quad \text{on } I \times \omega.
\end{align}
\\
With the above preparation in hand, we consider the simplified Ericksen--Leslie  equation for the flow of nematic liquid crystals  interacting with a flexible structure in  the closure of
\begin{align*}
I\times\Oeta:=\bigcup_{i\in I}\{t\}\times\Omega_{\eta(t)}.
\end{align*}
Our goal is to find a
structure displacement function $\eta:(t, \by)\in I \times \omega \mapsto   \eta(t,\by)\in \mathbb{R}$,  a director field $\bd :(t, \mathbf{x} )\in I \times \Oeta  \mapsto \bd (t, \mathbf{x} ) \in \mathbb{R}^2$,  a fluid velocity field $\mathbf{u}:(t, \mathbf{x})\in I \times \Oeta \mapsto  \mathbf{u}(t, \mathbf{x}) \in \mathbb{R}^2$
 and a pressure function $\pi:(t, \mathbf{x})\in I \times \Oeta \mapsto  \pi(t, \mathbf{x}) \in \mathbb{R}$ 
 such that the system of equations
\begin{align} 
\divx \bu=0
&\qquad \text{in }I \times  \Oeta,
\label{contEq}
\\
\varrho_s\partial_t^2\eta -\gamma\partial_t\naby^2 \eta + \alpha\naby^4 \eta= f(\eta,\bu,\pi,\bd) 
&\qquad \text{on }I \times \omega,
\label{shellEq}
\\
\varrho_f\big(\partial_t \bu  + (\mathbf{u}\cdot \nabx)\mathbf{u} \big)
= 
\divx\mathbb{T}(\bu,\pi,\bd) 
&\qquad \text{in }I \times  \Oeta,
\label{momEq}
\\
\partial_t \bd + (\mathbf{u}\cdot \nabx) \bd
=
\gamma_1   \big( \Delx \bd -\tfrac{1}{\epsilon^2}(\vert \bd\vert^2 - 1)\bd
\big)
&\qquad \text{in }I \times  \Oeta,
\label{directorEq}
\end{align}
are satisfied in the sense of distribution. Here, $\mathbb{T}(\bu,\pi,\bd)$ is the Cauchy stress tensor   given by 
\begin{align}
\label{cauchyStress}
\mathbb{T}(\bu,\pi,\bd)=2\mu\mathbb{D}(\bu) -\pi\mathbb{I}_{2\times2}
- \lambda \nabx \bd\odot \nabx\bd
,\qquad
 \mathbb{D}(\bu)=\frac{1}{2}(\nabx\bu+\nabx\bu^\top).
\end{align}
The parameters   $\gamma,\gamma_1, \epsilon, \varrho_s,  \alpha, \varrho_f, \mu, \lambda  $ are
all strictly positive. However, henceforth, we will set all these parameters to $1$ (except for $\epsilon>0$) since doing so will not affect our analysis.
The force $f$, given by 
\begin{align}
\label{wierdForce}
f(\eta,\bu,\pi,\bd)=-(\mathbb{T}(\bu,\pi,\bd) \bn_\eta)\circ \bm{\varphi}_\eta  \cdot \bn\,|\partial_{y } \bm{\varphi}_{\eta} |
\end{align}
is the surface force that the liquid crystal exerts on the structure with $\bn$ being the  unit normal at $\omega$ and $\bn_\eta$, the  unit normal at $\partial\Oeta$. We  remark that the term in the momentum equation \eqref{momEq} containing the director field can be rewritten as
\begin{align}
\label{dRewritten}
\divx   (\nabx \bd\odot \nabx\bd)
=
\frac{1}{2}
\nabx\vert \nabx\bd\vert^2+  \nabx \bd\Delx\bd
\end{align}
where $\nabx \bd\odot \nabx\bd$ is a $2\times2$ matrix whose $(i,j)$-th entry is $\partial_{x_i}\bd\cdot \partial_{x_j}\bd$ for $1\leq i,j\leq 2$.

The initial conditions for the system \eqref{contEq}-\eqref{directorEq}  are
\begin{align}
&\eta(0, \by) = \eta_0(\by), \quad \partial_t\eta(0, \by) = \eta_*(\by)
&\quad \text{on }  \omega,
\label{initialStructure}
\\
&\bd(0, \bx)=\bd_0(\bx), \qquad\mathbf{u}(0, \cdot) = \mathbf{u}_0(\bx)
&\quad \text{in }  \Omega_{\eta_0}.
\label{initialDensityVelo}
\end{align}
With respect to boundary conditions,  we impose periodicity on $\omega$ 
and further  impose  the following 
\begin{align}
\label{noSlip}
& \bu \circ \bm{\varphi}_\eta =\partial_t\eta\bn
&\quad \text{on }I \times \omega, 
\\&\partial_{\bn_\eta}\bd=0  
&\quad \text{on }I \times \partial\Oeta.\label{dstar}
\end{align} 
Since the fluid is viscous, \eqref{noSlip} illustrates the fact that physically, the fluid adheres to the elastic structure and thus, the velocities will coincide  at the interface. The second condition \eqref{dstar} is the no anchoring or free boundary condition for the director field. It means that the directors are free at the boundary and thus, the director remains uniform with no change occurring near the boundary.

Our system above is a solute-solvent-structure (SSS) three matter state problem of dimension 2D-2D-1D, respectively. The  main reason  why the 3D-3D-2D settings appears out of reach with the current method is the several instances of the application of Ladyzhenskaya's inequality used in the proof. Since this is a dimension-dependent inequality, unfortunately, how it is applied in this work appears to rule out the 3D-3D-2D case. 
 Employing a different technique may be required to remedy this issue in the latter setting. 

\section{Preliminaries and main results}
\noindent 
For any two non-negative quantities $F$ and $G$, we write $F \lesssim G$  if there is a constant $c>0$  such that $F \leq c\,G$. If $F \lesssim G$ and $G\lesssim F$ both hold, we use the notation $F\sim G$.  The scalar matrix product of two matrices $\mathbb{A}=(a_{ij})_{i,j=1}^d$ and $\mathbb{B}=(b_{ij})_{i,j=1}^d$ is denoted by $\mathbb{A}:\mathbb{B}=\sum_{ij}a_{ij}b_{ij}$.
The symbol $\vert \cdot \vert$ may be used in four different contexts. For a scalar function $f\in \mathbb{R}$, $\vert f\vert$ denotes the absolute value of $f$. For a vector $\bff\in \mathbb{R}^d$, $\vert \bff \vert$ denotes the Euclidean norm of $\bff$. For a square matrix $\mathbb{F}\in \mathbb{R}^{d\times d}$, $\vert \mathbb{F} \vert$ shall denote the Frobenius norm $\sqrt{\mathrm{trace}(\mathbb{F}^T\mathbb{F})}$. Also, if $S\subseteq  \mathbb{R}^d$ is  a (sub)set, then $\vert S \vert$ is the $d$-dimensional Lebesgue measure of $S$.
Since we only consider functions on $\omega \subset\mathbb{R}$ with periodic boundary
conditions and zero mean values, we have the following equivalences
\begin{equation}
\begin{aligned}\nonumber
\Vert \cdot\Vert_{W^{2k-1,2}(\omega)}\sim
\Vert \naby^{2k-1}\cdot\Vert_{L^{2}(\omega)},
\qquad
\Vert \cdot\Vert_{W^{2k,2}(\omega)}\sim
\Vert \naby^{2k}\cdot\Vert_{L^{2}(\omega)}, 
\end{aligned} 
\end{equation}
for $k\in\mathbb{N}$. For $I:=(0,T)$, $T>0$, and $\eta\in C(\overline{I}\times\omega)$ satisfying $\|\eta\|_{L^\infty(I\times\omega)}\leq L$ where $L>0$ is a constant, we define for $1\leq p,r\leq\infty$,
\begin{align*} 
L^p(I;L^r(\Omega_\eta))&:=\Big\{v\in L^1(I\times\Omega_\eta):\substack{v(t,\cdot)\in L^r(\Omega_{\eta})\,\,\text{for a.e. }t,\\\|v(t,\cdot)\|_{L^r(\Omega_{\eta})}\in L^p(I)}\Big\},\\
L^p(I;W^{1,r}(\Omega_\eta))&:=\big\{v\in L^p(I;L^r(\Omega_\eta)):\,\,\nabx v\in L^p(I;L^r(\Omega_\eta))\big\}.
\end{align*} 
Higher-order Sobolev spaces can be defined accordingly. For $k>0$ with $k\notin\mathbb N$, we define the fractional Sobolev space $L^p(I;W^{k,r}(\Oeta))$ as the class of $L^p(I;L^r(\Omega_\eta))$-functions $v$ for which 
\begin{align*}
\|v\|_{L^p(I;W^{k,r}(\Oeta))}^p
&=\int_I\bigg(\int_{\Oeta} \vert v\vert^r\dx
+\int_{\Oeta}\int_{\Oeta}\frac{|v(\bx)-v(\bx')|^r}{|\bx-\bx'|^{d+k r}}\dx\dx'\bigg)^{\frac{p}{r}}\dt
\end{align*}
is finite. Accordingly, we can also introduce fractional differentiability in time for the spaces on moving domains.

\subsection{The concept of solutions and the main results}
We now give a precise definition of the notion of a solution we are interested in and state our main result.
\begin{definition}[Weak solution] \label{def:weakSolution}
Let $(\eta_0, \eta_*, \bu_0,\bd_0 )$ be a dataset such that
\begin{equation}
\begin{aligned}
\label{dataset}
&
\eta_0 \in W^{2,2}(\omega) \text{ with } \Vert \eta_0\Vert_{L^\infty(\omega)}<L , 
\\
& 
\eta_* \in L^2(\omega),
\quad 
\bu_0\in L^2_{\mathrm{\Div}}(\Omega_{\eta_0}),
\\&
\text{and }\bu_0\circ\bm{\varphi}_{\eta_0} =\eta_*\bn\text{ on $\omega$}, 
\\&
\bd_0 \in W^{1,2}(\Omega_{\eta_0}),
\quad
\vert \bd_0\vert\leq1 \text{ a.e. in } \Omega_{\eta_0}.
\end{aligned}
\end{equation} 
We call the triple
$(\eta,\bu,\bd)$
a weak solution to the system \eqref{contEq}--\eqref{dstar} with data $(  \eta_0,  \eta_*,\bu_0,\bd_0)$ provided that the following hold:
\begin{itemize}
\item[(a)] The structure displacement $\eta$ satisfies
\begin{align*}
\eta \in W^{1,\infty} \big(I; L^2(\omega) \big)\cap W^{1,2} \big(I; W^{1,2}(\omega) \big)\cap  L^\infty \big(I; W^{2,2}(\omega) \big) 
\quad \text{with} \quad \Vert \eta \Vert_{L^\infty(I \times \omega)} <L,
\end{align*}
as well as $\eta(0,\by)=\eta_0(\by)$ and $\partial_t\eta(0,\by)=\eta_*(\by)$.
\item[(b)] The velocity field $\bu$ satisfies
\begin{align*}
 \bu \in L^\infty \big(I; L^2(\Omega_{\eta}) \big)\cap  L^2 \big(I; W^{1,2}_{\Div}(\Omega_{\eta}) \big) \quad \text{with} \quad 
\bu \circ\bm{\varphi}_\eta = \partial_t\eta\bn\quad\text{on}\quad I\times\omega,
\end{align*}
as well as $\bu(0,\bx)=\bu_0(\bx)$.
\item[(c)] The director field $\bd$ satisfies
\begin{align*}
 \bd \in L^\infty \big(I; W^{1,2}(\Omega_{\eta}) \big)\cap  L^2 \big(I; W^{2,2}(\Omega_{\eta}) \big)
 \quad \text{with} \quad 
\vert \bd\vert\leq1 \quad\text{a.e. in}\quad I\times\Oeta,
\end{align*}
as well as $\bd(0,\bx)=\bd_0(\bx)$.
\item[(d)] For all  $(\phi, {\bfphi}, \bm{\psi}) \in C^\infty(\overline{I}\times\omega) \otimes C^\infty(\overline{I} ;C^\infty_{\Div}(\R^2)) \otimes C^\infty(\overline{I}\times\R^2) $ with $\phi(T,\cdot)= {\bfphi}(T,\cdot)=\bm{\psi}(T,\cdot)=0$ and $\bfphi\circ\bfvarphi_\eta =\phi {\bn}$ on $I\times\omega$, we have
\begin{align*}
\int_I  \frac{\mathrm{d}}{\dt}&\bigg(\int_\omega \partial_t \eta \, \phi \dy
+
\int_{\Oeta}\bu  \cdot {\bfphi}\dx
+
\int_{\Oeta}\bd \cdot \bm{\psi} \dx
\bigg)\dt 
\\
&=
\int_I \int_\omega \big(\partial_t \eta\, \partial_t\phi-\partial_t\naby\eta\,\naby \phi
-
\naby^2\eta\,\naby^2 \phi \big)\dy\dt
\\&+\int_I  \int_{\Oeta}\big(  \bu\cdot \partial_t  {\bfphi} + \bu \otimes \bu: \nabx{\bfphi} 
 -  
\nabx\bu:\nabx {\bfphi} 
+
\nabx \bd\odot \nabx\bd:\nabx \bfphi \big) \dx\dt
\\&
+\int_I  \int_{\Oeta}\big( \bd\cdot \partial_t\bm{\psi} 
+
\bu\otimes \bd:\nabx \bm{\psi}
+
\Delx \bd\cdot  \bm{\psi} -\tfrac{1}{\epsilon^2}( \vert \bd\vert^2 - 1)\bd\cdot  \bm{\psi}
\big) \dx\dt,
\end{align*}
\item[(e)]  
the estimate 
\begin{equation}
\begin{aligned}
\label{energyEst}
 \frac{1}{2}\int_\omega&\big( \vert\partial_t \eta \vert^2+ \vert\naby^2\eta \vert^2\big)(t) \dy
+ 
\frac{1}{2}
\int_{\Oeta} \Big(\vert   \bu  \vert^2
+
\vert\nabx\bd  \vert^2
+
\tfrac{1}{2\epsilon^2}( \vert\bd \vert^2 - 1)^2\Big)(t)
 \dx
\\& 
+
\int_0^t
\int_\omega \vert\partial_{t'}\naby \eta\vert^2\dy \dt'
+
\int_0^t
 \int_{\Omega_{\eta }} \Big( \vert \nabx \bu \vert^2
 +
 \big\vert\Delx\bd - \tfrac{1}{\epsilon^2}(\vert \bd \vert^2 - 1)\bd\big\vert^2 
 \Big)
  \dx\dt' 
\\&\leq
\frac{1}{2}\int_\omega \big( \vert\eta_*\vert^2+ \vert\naby^2\eta_0\vert^2\big) \dy 
+ 
\frac{1}{2}
\int_{\Omega_{\eta_0}} \Big(\vert   \bu_0  \vert^2
+
\vert\nabx\bd_0  \vert^2
+
\tfrac{1}{2\epsilon^2}(\vert\bd_0 \vert^2 - 1)^2\Big) 
 \dx 
\end{aligned}
\end{equation}
holds for almost all $t\in I$. 
\end{itemize}
\end{definition}  
With this definition in hand, we now state our first main result.
\begin{theorem}
\label{thm:main}
For a dataset $( \eta_0, \eta_*, \bu_0,\bd_0 )$ satisfying \eqref{dataset}, a weak solution $(\eta,\bu,\bd)$ exists.
\end{theorem}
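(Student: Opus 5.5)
The plan is the by-now standard fluid–structure strategy: decouple, solve a regularized system by a fixed point, derive uniform energy bounds, and pass to the limit using compactness tailored to moving domains. I will first fix a given displacement $\zeta$ with $\Vert\zeta\Vert_{L^\infty(I\times\omega)}<\alpha<L$ and a given velocity $\bv$. For these I will solve two problems separately. The first is the linear structure–fluid problem: the Stokes-type momentum equation in $\Omega_\zeta$, with convective term $(\bv\cdot\nabx)\bu$ and forcing $\divx(\nabx\bd\odot\nabx\bd)$, coupled to the shell equation through the kinematic condition \eqref{noSlip}. The second is the director equation \eqref{directorEq} in $\Omega_\zeta$ with transport by $\bv$ and Neumann condition \eqref{dstar}. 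The director problem is a semilinear parabolic equation on a moving domain. I will pull it back to $\Omega$ by the Hanzawa transform \eqref{Hanz} and solve it by Galerkin, using regularity of $\zeta$ and $\partial_t\zeta$. The fluid–shell linear problem can likewise be solved by a Galerkin scheme on the transformed domain, as in the Koiter-type literature. A fixed-point argument (Schauder, or a set-valued Kakutani version) on $(\zeta,\bv)$ then yields a solution of the coupled problem on a short time interval. This interval can be iterated as long as $\Vert\eta\Vert_{L^\infty}<\alpha$ and the energy stays bounded. If needed, I will regularize the geometry, e.g. by mollifying $\zeta$ in the transform, and remove this regularization afterwards.

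Two a priori properties drive everything. \textbf{Maximum principle.} With $|\bd_0|\le1$, I test the director equation with $(|\bd|^2-1)_+\bd$. This uses $\divx\bv=0$, $\bv\cdot\bn_\zeta=\partial_t\zeta\,\bn\cdot\bn_\zeta$ matching the boundary motion (so the transport term is compatible with the Reynolds transport theorem), and the Neumann condition. The outcome is $|\bd|\le1$. \textbf{Energy inequality \eqref{energyEst}.} I test the momentum equation with $\bu$, the shell equation with $\partial_t\eta$, and the director equation with $-(\Delx\bd-\tfrac1{\epsilon^2}(|\bd|^2-1)\bd)$. The key cancellation is between $\int\nabx\bd\odot\nabx\bd:\nabx\bu$ and the term $\int(\bu\cdot\nabx)\bd\cdot\Delx\bd$, via \eqref{dRewritten}. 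Boundary terms arising from the Reynolds transport theorem, namely $\frac12\int_{\partial\Oeta}(|\nabx\bd|^2+\tfrac1{2\epsilon^2}(|\bd|^2-1)^2)\bu\cdot\bn_\eta$, must cancel against the boundary contribution of the pressure-like term $\frac12\nabx|\nabx\bd|^2$. This is exactly why the stress includes $-\nabx\bd\odot\nabx\bd$ in the shell force \eqref{wierdForce}. I must verify this cancellation carefully at the decoupled level; if the decoupling destroys it, I will instead keep $\bv=\bu$ in the director equation inside the Galerkin scheme. The dissipation then gives $\Delx\bd\in L^2L^2$, using $|\bd|\le1$ and elliptic Neumann regularity on $\Omega_\eta$ with $\eta\in L^\infty W^{2,2}\subset C^{1,1/2}$, hence $\bd\in L^2W^{2,2}$. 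Because the shell energy controls $\eta$ uniformly in $C^{1,\theta}$, together with a Korn inequality on $\Oeta$, the solution can be continued until $\Vert\eta\Vert_{L^\infty}$ reaches $\alpha$. The lack of degeneracy assumed in the statement lets me work up to that point, and I expect the constructed time to be arbitrary $T$ under the standing non-degeneracy.

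For the limit passage (either in the Galerkin or in the regularization parameter), the \textbf{main obstacle} is strong compactness of $\nabx\bd$ and $\bu$ on varying domains. This compactness is needed to pass to the limit in $\nabx\bd\odot\nabx\bd:\nabx\bfphi$, $\bu\otimes\bu$ and $\bu\otimes\bd$. For $\bd$, I will use Ladyzhenskaya's inequality in 2D, $\Vert\nabx\bd\Vert_{L^4}^2\lesssim\Vert\nabx\bd\Vert_{L^2}\Vert\bd\Vert_{W^{2,2}}$, to bound $(\bu\cdot\nabx)\bd$ in $L^2L^{4/3}$ and hence $\partial_t\bd$ in $L^{4/3}L^{4/3}$. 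An Aubin–Lions argument on the pulled-back functions $\bd\circ\bm{\Psi}_\eta$ then gives strong convergence of $\nabx\bd$ in $L^2L^2$. For $(\partial_t\eta,\bu)$, I will use the now-standard compactness lemma for fluid–structure problems: one tests with a solenoidal extension of $\partial_t\eta$ minus $\bu$, and uses a fractional time-regularity estimate plus the $W^{1,2}$ control of $\partial_t\eta$ from the $\gamma$-damping. This yields $\int_I\int_\omega|\partial_t\eta|^2+\int_I\int_{\Oeta}|\bu|^2$ convergence, hence strong $L^2$ convergence. The construction of divergence-free test functions $\bfphi$ with $\bfphi\circ\bfvarphi_\eta=\phi\bn$ adapted to each approximate domain, via a Bogovskii-type correction, allows passing to the limit in (d). Lower semicontinuity then gives \eqref{energyEst}, and $|\bd|\le1$ survives a.e. convergence.
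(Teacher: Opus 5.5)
Your proposal follows essentially the paper's route: regularized geometry with linearized momentum convection, a Galerkin approximation, a set-valued fixed point in $(\zeta,\bv)$, the energy and maximum-principle bounds, and then Aubin--Lions and Muha--Schwarzacher-type compactness. Your fallback is also exactly the paper's choice: it transports $\bd$ by the unknown $\bu$ rather than by $\bv$, coupling $(\eta^N,\bu^N)$ and $\bd^N$ through a Banach contraction at the Galerkin level, so that $\int(\bu\cdot\nabx)\bd\cdot\Delx\bd=-\int\nabx\bd\odot\nabx\bd:\nabx\bu$ holds already for the linearized problem; one correction to your heuristic is that the Reynolds-transport boundary terms cancel against those from integrating the transport terms by parts purely because $\bu\cdot\bn_\eta$ equals the normal velocity of $\partial\Oeta$, not because of the Ericksen part of the shell force, which in fact vanishes on $\partial\Oeta$ since $(\nabx\bd\odot\nabx\bd)\bn_\eta=(\nabx\bd)^\top\partial_{\bn_\eta}\bd=0$ under the Neumann condition.
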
 
\section{Preparatoy results}
\subsection{A Priori estimate}
\label{sec:apriori}
In this subsection, we give a formal justification of the energy inequality. This can be made rigorous by performing the computations below for the corresponding regularized functions and subsequently passing to the limit in the regularization parameter. Consequently, we assume that $(\eta,\bu, \bd) \in C^\infty(\overline{I} \times \omega)\otimes C^\infty(\overline{I};C^\infty_{\divx}( \Omega_\eta))\otimes C^\infty(\overline{I}\times\Omega_\eta)$ and we consider $(\partial_t\eta, \bu, \Delx\bd-\tfrac{1}{\epsilon^2}(\vert \bd \vert^2 - 1)\bd)$ as test functions for \eqref{shellEq}-\eqref{directorEq}.
If we test \eqref{shellEq} with $\partial_t\eta$ and use \eqref{cauchyStress} and \eqref{wierdForce}, we obtain
\begin{equation}
\begin{aligned}
\label{energyEstC}
\frac{1}{2}
\frac{\dd}{\dt}
\int_\omega \big( \vert\partial_t \eta\vert^2+ \vert\naby^2\eta\vert^2\big) \dy
&+
\int_\omega \vert\partial_t\naby \eta\vert^2\dy 
= 
-
\int_\omega
\partial_t\eta\,
f(\eta,\bu,\pi)\dy 
\\&
= 
-
\int_{\partial\Oeta}
 \mathbb{T}(\bu,\pi,\bd)\bn_\eta\cdot(\partial_t\eta\bn)\circ\bm{\varphi}_\eta^{-1} \dd \mathcal{H}^1.
\end{aligned}
\end{equation}
We now test \eqref{momEq} with $\bu$. First of all, since $\divx\bu=0$, it follows that
\begin{equation}
\begin{aligned}\nonumber
\int_{\Oeta} \Big(\partial_t\bu+ (\mathbf{u}\cdot \nabx)\mathbf{u} 
\Big)\cdot\bu\dx
&=
\frac{1}{2} 
\int_{\Oeta}
\partial_t\vert\bu\vert^2\dx
+
\frac{1}{2} 
\int_{\partial\Oeta}
\vert\bu\vert^2 \bn_\eta\cdot\bu\dd \mathcal{H}^1
\\&
=
\frac{1}{2} \frac{\dd}{\dt}
\int_{\Oeta}
\vert\bu\vert^2\dx
\end{aligned}
\end{equation}
by using Reynolds transport theorem. Using $\divx\bu=0$, we also obtain
\begin{equation}
\begin{aligned}\nonumber
\int_{\Oeta}  
\divx\mathbb{T}(\bu,\pi,\bd)
 \cdot\bu\dx
&=
\int_{\partial\Oeta}
\mathbb{T}(\bu,\pi,\bd)\bn_\eta
 \cdot (\partial_t\eta\bn)\circ\bm{\varphi}_\eta^{-1} \dd \mathcal{H}^1
\\&
-
 \int_{\Oeta}  \vert \nabx \bu \vert^2
  \dx
 +
 \int_{\Oeta}\nabx \bd\odot \nabx\bd:\nabx \bu
  \dx.
\end{aligned}
\end{equation}
It follows that   testing \eqref{momEq} with $\bu$ yields
 \begin{equation}
\begin{aligned}
\label{energyEstA}
\frac{1}{2}
\frac{\dd}{\dt}
\int_{\Oeta} \vert   \bu \vert^2
 \dx
& +
 \int_{\Oeta}  \vert \nabx \bu \vert^2
  \dx
  =
 \int_{\Oeta}\nabx \bd\odot \nabx\bd:\nabx \bu
  \dx 
\\& +
\int_{\partial\Oeta}
\mathbb{T}(\bu,\pi,\bd)\bn_\eta
 \cdot (\partial_t\eta\bn)\circ\bm{\varphi}_\eta^{-1} \dd \mathcal{H}^1.
\end{aligned}
\end{equation}
Our next goal is to  test   \eqref{directorEq} with $\Delx\bd-\tfrac{1}{\epsilon^2}(\vert \bd \vert^2 - 1)\bd$. Before we do this, we first note that by Reynolds transport theorem and \eqref{dstar},
\begin{equation}
\begin{aligned}\nonumber
\int_{\Oeta}  
\partial_t\bd \cdot\Delx\bd \dx
&=
-
\int_{\Oeta}  
\partial_t\nabx\bd :\nabx\bd \dx
+
\int_{\Oeta}  
 \divx(\nabx\bd \partial_t\bd)\dx
\\&=
-\frac{1}{2}
\int_{\Oeta}  
\partial_t\vert\nabx\bd \vert^2 \dx 
\\&=
-\frac{1}{2}
\frac{\dd}{\dt}
\int_{\Oeta}  
\vert\nabx\bd \vert^2 \dx. 
\end{aligned}
\end{equation} 
Similarly, by Reynolds transport theorem,
\begin{equation}
\begin{aligned}\nonumber
\int_{\Oeta} 
\partial_t\bd\cdot \tfrac{1}{\epsilon^2}( \vert \bd \vert^2 - 1)\bd\dx
&=
\frac{1}{2\epsilon^2}
\int_{\Oeta} 
(\partial_t\vert \bd\vert^2 )(\vert \bd \vert^2-1)\dx
\\&= 
\frac{1}{4\epsilon^2}
\int_{\Oeta} 
\partial_t (\vert \bd \vert^2 - 1)^2\dx
\\&
=
\frac{1}{4\epsilon^2}
\frac{\dd}{\dt}
\int_{\Oeta} 
 (\vert \bd \vert^2 -1)^2\dx
-
\frac{1}{4\epsilon^2}
\int_{\partial\Oeta} 
(\partial_t\eta\bn)\circ\bm{\varphi}_\eta^{-1}\cdot\bn_\eta(\vert \bd \vert^2 - 1)^2\dd\mathcal{H}^1.
\end{aligned}
\end{equation} 
Now note that since the identity
\begin{align*}
(\bu\cdot\nabx)\bd\cdot\Delx\bd 
=
\sum u_k\partial_kd_i\partial_j\partial_j d_i 
=
\sum \partial_kd_i\partial_j\partial_j d_i u_k 
=
(\nabx\bd \Delx\bd)\cdot\bu
\end{align*}
hold, it follows from \eqref{dRewritten} and \eqref{dstar} that
\begin{align*}
\int_{\Oeta}  
\big((\mathbf{u}\cdot \nabx)\bd\big) \cdot \Delx\bd\dx
&= 
\int_{\Oeta}  
\Big(
\divx   (\nabx \bd\odot \nabx\bd) 
-
\tfrac{1}{2} 
\nabx\vert \nabx\bd\vert^2
\Big)
\cdot\bu
\dx
\\&=
-
\int_{\Oeta}\nabx \bd\odot \nabx\bd:\nabx \bu
  \dx.
\end{align*} 
Also, note that by Gauss theorem
\begin{equation}
\begin{aligned}\nonumber
\int_{\Oeta}  
\big((\mathbf{u}\cdot \nabx)\bd\big) \cdot \tfrac{1}{\epsilon^2}(\vert \bd \vert^2 - 1)\bd\dx
&=
\frac{1}{2\epsilon^2}
\int_{\Oeta}  
((\mathbf{u}\cdot \nabx)\vert \bd\vert^2  )( \vert \bd \vert^2 -1)\dx
\\&=
\frac{1}{4\epsilon^2}
\int_{\Oeta} 
(\mathbf{u}\cdot \nabx)(\vert \bd\vert^2 -1)^2\dx
\\&= 
\frac{1}{4\epsilon^2}
\int_{\partial\Oeta } 
(\partial_t\eta\bn)\circ\bm{\varphi}_\eta^{-1}\cdot \bn_\eta(\vert \bd \vert^2 - 1)^2\dd \mathcal{H}^1.
\end{aligned}
\end{equation} 
Therefore, testing \eqref{directorEq} with $\Delx\bd- \tfrac{1}{\epsilon^2}(\vert \bd \vert^2 -1)\bd$
 yields
\begin{equation}
\begin{aligned}
\label{energyEstB}
\frac{1}{2}
\frac{\dd}{\dt}&
\int_{\Oeta}  
\vert\nabx\bd \vert^2 \dx
+
\frac{1}{4\epsilon^2}
\frac{\dd}{\dt}
\int_{\Oeta} 
 (\vert \bd \vert^2 - 1)^2\dx
 +\int_{\Oeta} \big\vert\Delx\bd- \tfrac{1}{\epsilon^2}( \vert \bd \vert^2 -1)\bd\big\vert^2  \dx
 \\&=  
-
\int_{\Oeta}\nabx \bd\odot \nabx\bd:\nabx \bu
  \dx.
\end{aligned}
\end{equation}
Adding up \eqref{energyEstC}, \eqref{energyEstA} and \eqref{energyEstB} and integrating the resulting summand in time  yields
\begin{equation}
\begin{aligned}
\label{energyEstD00}
\frac{1}{2}& \frac{\dd}{\dt}
\int_\omega \big( \vert\partial_t \eta \vert^2+ \vert\naby^2\eta \vert^2\big)  \dy
+
\frac{1}{2}\frac{\dd}{\dt}
\int_{\Oeta} \Big(\vert   \bu  \vert^2
+
\vert\nabx\bd  \vert^2
+
\tfrac{1}{2\epsilon^2}(\vert\bd \vert^2 - 1)^2\Big) 
 \dx
\\& 
+ 
\int_\omega \vert\partial_{t}\naby \eta\vert^2\dy  
+ 
 \int_{\Oeta} \Big( \vert \nabx \bu \vert^2
 +
 \big\vert\Delx\bd -  \tfrac{1}{\epsilon^2}(\vert \bd \vert^2 - 1)\bd\big\vert^2 
 \Big)
  \dx 
\\&
=  0.
\end{aligned}
\end{equation} 
Integrating in time now gives the desired energy equality. This equality becomes an inequality when one derives the estimate rigorously from a finite-dimensional approximation and use a lower-semicontinuous argument to pass to the limit in the approximation parameter.

\subsection{Maximum principle for director field}
\label{sec:maxPrin}
This short subsection is devoted to the proof of the  maximum principle enjoyed by the director field in the Ginzburg--Landau approximation of the simplified Ericksen--Leslie model. This is a well-known result but since we don't have a reference for it, we present the short proof here for completeness.
\begin{proposition}
Let $\Omega\subseteq\mathbb{R}^d$ and let
$\bd\in C^1(\overline{I};C^2(\Omega) )$ be a solution of
\begin{align*}
\partial_t \bd + (\mathbf{u}\cdot \nabx) \bd
=
  \Delx \bd -\tfrac{1}{\epsilon^2}(\vert \bd\vert^2 - 1)\bd 
&\qquad \text{in }I \times  \Omega
\end{align*}
with any suitable boundary condition and initial condition
\begin{align*}
&\bd(0, \bx)=\bd_0(\bx),  
&\quad \text{in }  \Omega.
\end{align*}
If $\vert \bd_0\vert\leq1$ in $\Omega $, then $\vert \bd\vert\leq1$  in $I\times\Omega$.
\end{proposition}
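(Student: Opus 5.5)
We work with the scalar quantity $\varphi:=\vert\bd\vert^2-1$ and show that it cannot become positive. Taking the dot product of the director equation with $2\bd$ and using the pointwise identities $2\bd\cdot\partial_t\bd=\partial_t\vert\bd\vert^2$, $2\bd\cdot(\bu\cdot\nabx)\bd=(\bu\cdot\nabx)\vert\bd\vert^2$ and $2\bd\cdot\Delx\bd=\Delx\vert\bd\vert^2-2\vert\nabx\bd\vert^2$, we obtain
\begin{align*}
\partial_t\varphi+(\bu\cdot\nabx)\varphi-\Delx\varphi
=-2\vert\nabx\bd\vert^2-\tfrac{2}{\epsilon^2}\vert\bd\vert^2\varphi
\leq -\tfrac{2}{\epsilon^2}\vert\bd\vert^2\varphi .
\end{align*}
So $\varphi$ is a subsolution of a linear advection--diffusion equation with a zeroth-order coefficient $c:=\tfrac{2}{\epsilon^2}\vert\bd\vert^2\geq0$ that has a good sign.

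The second step is a weak maximum principle. We do it in energy form, which adapts to the Neumann condition \eqref{dstar} and to moving domains. Test the inequality with $\varphi_+=\max\{\varphi,0\}$. Then $\tfrac12\tfrac{\dd}{\dt}\int\varphi_+^2$ produces $\int\vert\nabx\varphi_+\vert^2$ on the left, and the term $-\int c\varphi_+^2$ on the right is nonpositive. The transport term becomes $\tfrac12\int\bu\cdot\nabx\varphi_+^2$. Integrating by parts and using $\divx\bu=0$ reduces it to a boundary flux.

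With the free boundary condition $\partial_{\bn}\bd=0$, the diffusion boundary term $\int_{\partial}\partial_{\bn}\varphi\,\varphi_+$ vanishes, because $\partial_{\bn}\varphi=2\bd\cdot\partial_{\bn}\bd=0$. On a moving domain, the Reynolds transport term cancels the convective flux exactly as in the energy computation of Section~\ref{sec:apriori}, since $\bu\cdot\bn_\eta$ equals the boundary velocity. We therefore get $\tfrac{\dd}{\dt}\int\varphi_+^2\leq0$. Since $\varphi_+(0)=0$ by $\vert\bd_0\vert\leq1$, it follows that $\varphi_+\equiv0$, i.e. $\vert\bd\vert\leq1$.

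Alternatively, for classical solutions one can argue pointwise. Suppose $\sup\varphi>0$ and consider $e^{-\lambda t}\varphi$ at a first interior maximum point. There $\nabx\varphi=0$, $\Delx\varphi\le0$ and $\partial_t\ge0$, which contradicts the strict negativity of the right-hand side.

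The main obstacle is the phrase ``any suitable boundary condition''. For Dirichlet data one needs $\vert\bd\vert\leq1$ on the boundary so that $\varphi_+$ vanishes there. For the Neumann condition the boundary terms cancel as above. We would state the argument for the no-anchoring condition \eqref{dstar} used in the paper and remark that Dirichlet data with modulus at most one is handled identically.

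A secondary technical point is that $\varphi_+$ is only Lipschitz in $\varphi$. This is harmless because $\varphi\in C^1$ implies $\varphi_+\in W^{1,2}$ with $\nabx\varphi_+=\mathbf 1_{\{\varphi>0\}}\nabx\varphi$.
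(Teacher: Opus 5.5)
Your proof is correct. Your main argument, however, takes a genuinely different route from the paper.

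The paper uses the classical pointwise maximum principle. From the same equation for $\vert\bd\vert^2$ that you derive, it assumes an interior point $(t_*,\bx_*)$ where $\vert\bd\vert^2$ attains its maximum over $[0,t_*]\times\Omega$ with value $>1$. It gets a contradiction because $-\tfrac{2}{\epsilon^2}(\vert\bd\vert^2-1)\vert\bd\vert^2<0$ there. That is exactly your ``alternative'' paragraph, with two remarks:
\begin{itemize}
\item The weight $e^{-\lambda t}$ is unnecessary, since the reaction term is already strictly negative wherever $\varphi>0$.
\item Your one-sided conditions $\partial_t\varphi\geq0$, $\Delx\varphi\leq0$ are the correct ones. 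The paper asserts $\partial_t\vert\bd_*\vert^2=0$ and $\Delx\vert\bd_*\vert^2<0$, which is more than the second-derivative test gives, though the contradiction survives with the weak inequalities.
\end{itemize}

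Your main argument is instead a Stampacchia-type truncation estimate for $\varphi_+$, and it has two advantages. First, it handles the boundary honestly. The no-anchoring condition enters through $\partial_{\bn}\varphi=2\bd\cdot\partial_{\bn}\bd=0$, and the moving boundary enters through $\bu\cdot\bn_\eta$ being the normal boundary velocity. The paper's pointwise argument silently assumes that the maximum is attained, and attained at an interior point. Second, your estimate transfers more readily to the Sobolev-regular solutions to which the principle is actually applied in the proof of Theorem~\ref{thm:mainX}, where $C^1(\overline{I};C^2)$ regularity is not available.

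The price is structure on the drift and the domain: you need $\divx\bu=0$, the kinematic (or impermeability) condition, and integrability on a bounded domain. The pointwise argument needs nothing about $\bu$, since $(\bu\cdot\nabx)\varphi$ vanishes at a spatial critical point. For example, with an inflow boundary your flux term would have the wrong sign, whereas the drift plays no role at all in the pointwise argument.
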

\begin{proof}
First of all, we note that if we take the scalar product of \eqref{directorEq} with $2\bd$, we obtain
\begin{align}
\label{vertd2}
\partial_t\vert \bd \vert^2 + (\mathbf{u}\cdot \nabx)\vert  \bd\vert ^2
=
  \Delx \vert \bd\vert ^2-2\vert \nabx\bd\vert ^2 -\tfrac{2}{\epsilon^2}(\vert \bd\vert^2 - 1)\vert \bd\vert ^2
&\qquad \text{in }I \times  \Omega.
\end{align} 
Now, for the sake of a contradiction, let us now assume that there exists a spacetime point $(t_*,\bx_*)\in I \times  \Omega$ such that
\begin{align}\label{great1}
\vert \bd(t_*,\bx_*)\vert^2=\max_{(t,\bx)\in[0,t_*,]\times\Omega }\vert\bd (t,\bx)\vert^2>1.
\end{align}
Then by the second derivative test,
\begin{align*}
\partial_t\vert \bd(t_*,\bx_*)\vert^2=0, \qquad
\nabx\vert \bd(t_*,\bx_*)\vert^2=0, \qquad\Delx\vert \bd(t_*,\bx_*)\vert^2<0.
\end{align*}
It also follow from \eqref{great1} that
\begin{align*}
-\tfrac{2}{\epsilon^2}(\vert \bd(t_*,\bx_*)\vert^2 - 1)\vert \bd(t_*,\bx_*)\vert ^2<0
\end{align*}
whereas the trivial estimate
\begin{align*}
-2\vert \nabx\bd(t_*,\bx_*)\vert ^2\leq 0
\end{align*}
also hold. By using these (in)equalities and defining
\begin{align*}
\bd_*:=\bd(t_*,\bx_*), \qquad \bu_*=\bu(t_*,\bx_*)
\end{align*}
we find that
\begin{align*}
\partial_t\vert \bd_* \vert^2 + (\mathbf{u}_*\cdot \nabx)\vert  \bd_*\vert ^2
=0
\end{align*}
whereas
\begin{align*}
  \Delx \vert \bd_*\vert ^2-2\vert \nabx\bd_*\vert ^2 -\tfrac{2}{\epsilon^2}(\vert \bd_*\vert^2 - 1)\vert \bd_*\vert ^2<0.
\end{align*}
This is a contradiction to the identity \eqref{vertd2} which must hold for all spacetime points.
\end{proof}

\subsection{Equivalent representation for the director field equation}
The following lemma is key in passing to the limit in the Ginzburg-Landau functional. That will be the subject of our second main result. 
\begin{lemma}
\label{appen:lemma}
Let $\Omega\subseteq\mathbb{R}^2$ and $\bd^\perp=(-d_2,d_1)$. 
For $\bd\in C^1(\overline{I};C^2(\Omega) )$ such that $|\bd|=1$ in $I\times\Omega$, the following identities
\begin{enumerate}
\item[(1)] $\Delx\bd+|\nabx \bd|^2\bd
=
\bd^\perp(\Delx\bd\cdot\bd^\perp)=
\bd^\perp\divx((\nabx\bd)\bd^\perp)$;
\item[(2)] $\partial_t \bd =\bd^\perp (\partial_t \bd \cdot\bd^\perp )$;
\item[(2)] $ (\mathbf{u}\cdot \nabx) \bd=\bd^\perp (  (\mathbf{u}\cdot \nabx) \bd \cdot\bd^\perp )$ 
\end{enumerate} 
hold.
\end{lemma}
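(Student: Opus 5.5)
The whole argument rests on one fact: at each point $(t,\bx)$ with $|\bd|=1$, the pair $\{\bd,\bd^\perp\}$ is an orthonormal basis of $\R^2$. Hence every vector $\mathbf{v}\in\R^2$ decomposes as
\begin{align*}
\mathbf{v}=(\mathbf{v}\cdot\bd)\bd+(\mathbf{v}\cdot\bd^\perp)\bd^\perp .
\end{align*}
Each identity then reduces to computing the component along $\bd$ of the relevant vector. These components come from differentiating the constraint $|\bd|^2=1$, which is legitimate since $\bd\in C^1(\overline{I};C^2(\Omega))$.

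For (2) and (3), I differentiate $|\bd|^2=1$ in time and in each $x_k$. This gives $\partial_t\bd\cdot\bd=0$ and $\partial_{x_k}\bd\cdot\bd=0$ for $k=1,2$. Consequently $(\bu\cdot\nabx)\bd\cdot\bd=\sum_k u_k\,\partial_{x_k}\bd\cdot\bd=0$. The decomposition above, applied to $\mathbf{v}=\partial_t\bd$ and to $\mathbf{v}=(\bu\cdot\nabx)\bd$, then gives (2) and (3) at once.

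For the first equality in (1), I differentiate $\partial_{x_k}\bd\cdot\bd=0$ once more in $x_k$ and sum over $k$. This yields $\Delx\bd\cdot\bd+|\nabx\bd|^2=0$. Applying the decomposition to $\mathbf{v}=\Delx\bd$ gives
\begin{align*}
\Delx\bd=-|\nabx\bd|^2\bd+(\Delx\bd\cdot\bd^\perp)\bd^\perp ,
\end{align*}
which is the first equality after rearranging.

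For the second equality, I write $(\nabx\bd)\bd^\perp$ for the vector with components $\partial_{x_k}\bd\cdot\bd^\perp$. Its divergence is
\begin{align*}
\divx\big((\nabx\bd)\bd^\perp\big)=\Delx\bd\cdot\bd^\perp+\sum_k\partial_{x_k}\bd\cdot\partial_{x_k}\bd^\perp .
\end{align*}
So it suffices to show that $\partial_{x_k}\bd\cdot\partial_{x_k}\bd^\perp=0$ for each $k$. This holds pointwise for any vector $\mathbf{a}$, since $\mathbf{a}\cdot\mathbf{a}^\perp=-a_1a_2+a_2a_1=0$, and $\partial_{x_k}(\bd^\perp)=(\partial_{x_k}\bd)^\perp$ because the perp map is linear. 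This step actually does not even use $|\bd|=1$.

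The only point needing care is the index convention for $(\nabx\bd)\bd^\perp$, which must be read as contracting the component index of $\bd$ so that the divergence is taken over the $x$-index. With that convention fixed, there is no real obstacle; the proof is a short pointwise computation.
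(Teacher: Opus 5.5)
Your proof is correct. It uses the same ingredients as the paper, organized invariantly rather than in coordinates.

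The paper expands $\bd^\perp(\mathbf{v}\cdot\bd^\perp)$ component by component for $\mathbf{v}=\Delx\bd$, $\partial_t\bd$, $(\bu\cdot\nabx)\bd$. It then substitutes $d_2^2=1-d_1^2$ and $d_1^2=1-d_2^2$, which is exactly your decomposition written as $\bd^\perp\otimes\bd^\perp=\mathbb{I}-\bd\otimes\bd$. Finally it uses the product rule with $\Delx|\bd|^2=0$, $\partial_t|\bd|^2=0$ and $\nabx|\bd|^2=0$. These are your relations $\Delx\bd\cdot\bd=-|\nabx\bd|^2$, $\partial_t\bd\cdot\bd=0$ and $\partial_{x_k}\bd\cdot\bd=0$. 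For the divergence form, the paper writes out $\divx((\nabx\bd)\bd^\perp)$ explicitly and observes the cancellation $\partial_{x_k}\bd\cdot(\partial_{x_k}\bd)^\perp=0$, just as you do.

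Your version gains brevity and transparency:
\begin{itemize}
\item It isolates which consequence of $|\bd|=1$ each identity needs: the first-order one for (2)--(3), the second-order one for (1).
\item It makes visible that the divergence form holds with no constraint at all.
\item It shows the first equality in (1) is the tangential projection $(\mathbb{I}-\bd\otimes\bd)\Delx\bd$. That form carries over unchanged to any number of $x$-variables and to sphere-valued maps.
\end{itemize}
The paper's line-by-line expansion is fully explicit but easier to get wrong. Its computation for (2) writes $\partial_t(d_1^2-d_2^2)$ where $\partial_t(d_1^2+d_2^2)$ is meant. The slip is harmless, and the frame argument avoids it.
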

\begin{proof} To show the first identity in $(1)$, we note that 
\begin{align*}
\Delx\bd+|\nabx \bd|^2\bd=\big(\partial_1\partial_1d_1+\partial_2\partial_2d_1+|\nabx \bd|^2d_1\,,\,\partial_1\partial_1d_2+\partial_2\partial_2d_2+|\nabx \bd|^2d_2 \big)
\end{align*}
where
\begin{align*}
|\nabx \bd|^2=(\partial_1d_1)^2+(\partial_1d_2)^2+(\partial_2d_1)^2+(\partial_2d_2)^2
\end{align*}
is the square of the Frobenius norm.
On the other hand, 
\begin{align*}
\bd^\perp(\Delx\bd\cdot\bd^\perp)
&=
\bd^\perp(-d_2\,\partial_1\partial_1d_1-d_2\,\partial_2\partial_2d_1
+
d_1\,\partial_1\partial_1d_2+d_1\,\partial_2\partial_2d_2)
\\
&=
(d_2^2\,\partial_1\partial_1d_1+d_2^2\,\partial_2\partial_2d_1
-
d_1d_2\,\partial_1\partial_1d_2-d_1d_2\,\partial_2\partial_2d_2,
\\&\qquad\qquad
-d_1d_2\,\partial_1\partial_1d_1-d_1d_2\,\partial_2\partial_2d_1
+
d_1^2\,\partial_1\partial_1d_2+d_1^2\,\partial_2\partial_2d_2)
\\
&=:(e^\perp_1,e^\perp_2).
\end{align*}
Given that $ |\bd|=1$ is equivalent to $d_2^2=1-d_1^2$, it follows that
\begin{align*}
e^\perp_1
&=
\partial_1\partial_1d_1+ \partial_2\partial_2d_1
-
d_1^2\,\partial_1\partial_1d_1-d_1^2\,\partial_2\partial_2d_1
-
d_1d_2\,\partial_1\partial_1d_2-d_1d_2\,\partial_2\partial_2d_2
\\
&=
\partial_1\partial_1d_1+ \partial_2\partial_2d_1
-
d_1\,\partial_1(d_1\partial_1d_1)
+
d_1(\partial_1d_1)^2
-
d_1\,\partial_2(d_1\partial_2d_1)
+
d_1(\partial_2d_1)^2
\\&\qquad -
d_1\,\partial_1(d_2\partial_1d_2)
+
d_1(\partial_1d_2)^2
-
d_1\,\partial_2(d_2\partial_2d_2)
+
d_1(\partial_2d_2)^2
\\
&=
\partial_1\partial_1d_1+ \partial_2\partial_2d_1
+|\nabx \bd|^2d_1
-
d_1\,\partial_1(d_1\partial_1d_1) 
-
d_1\,\partial_2(d_1\partial_2d_1) 
\\&\qquad -
d_1\,\partial_1(d_2\partial_1d_2) 
-
d_1\,\partial_2(d_2\partial_2d_2) 
\\
&=
\partial_1\partial_1d_1+ \partial_2\partial_2d_1
+|\nabx \bd|^2d_1
-
\tfrac{1}{2}
d_1\big[\partial_1\partial_1d_1^2
+
\partial_2 \partial_2d_1^2
+
\partial_1\partial_1d_2^2 
+
\partial_2\partial_2d_2^2 \big]
\end{align*}
where using $d_1^2+d_2^2=1$ result in
\begin{align*}
\partial_1\partial_1d_1^2
+
\partial_2 \partial_2d_1^2
+
\partial_1\partial_1d_2^2 
+
\partial_2\partial_2d_2^2
=
\partial_1\partial_11
+
\partial_2 \partial_21
=0.
\end{align*}
Thus
\begin{align*}
e^\perp_1
=
\partial_1\partial_1d_1+ \partial_2\partial_2d_1
+|\nabx \bd|^2d_1 ,
\end{align*}
and similarly
\begin{align*}
e^\perp_2
=
\partial_1\partial_1d_2+ \partial_2\partial_2d_2
+|\nabx \bd|^2d_2.
\end{align*}
The desired first identity in $(1)$  follow.

To show the second identity in $(1)$, we note that 
\begin{align*}
\divx((\nabx\bd)\bd^\perp)
&=
\divx 
\begin{pmatrix}
-d_2\,\partial_1d_1+d_1\,\partial_1d_2\\
-d_2\,\partial_2d_1+d_1\,\partial_2d_2
\end{pmatrix} 
\\
&=
-\partial_1d_2\,\partial_1d_1-d_2\,\partial_1\partial_1d_1+\partial_1d_1\,\partial_1d_2+d_1\,\partial_1\partial_1d_2
\\&\qquad
-\partial_2d_2\,\partial_2d_1-d_2\,\partial_2\partial_2d_1+\partial_2d_1\,\partial_2d_2+d_1\,\partial_2\partial_2d_2 
\\
&=
-d_2\,\partial_1\partial_1d_1+d_1\,\partial_1\partial_1d_2
-d_2\,\partial_2\partial_2d_1+d_1\,\partial_2\partial_2d_2
\\&=\Delx\bd\cdot\bd^\perp.
\end{align*}
The desired second identity in $(1)$ easily follow.

To show $(2)$, we note that
\begin{align*}
\bd^\perp \partial_t \bd \cdot\bd^\perp
=&
(-d_2\,,\,d_1)(-d_2 \partial_td_1+d_1 \partial_td_2)
\\=&
(d_2^2 \partial_td_1-d_1d_2 \partial_td_2\,,\,-d_1d_2 \partial_td_1+d_1^2 \partial_td_2)
\\=&
\big((1-d_1^2) \partial_td_1-\tfrac{1}{2} d_1  \partial_td_2^2\,,\,-\tfrac{1}{2}d_2 \partial_td_1^2+(1-d_2^2) \partial_td_2\big)
\\=&
\big(  \partial_td_1-\tfrac{1}{2} d_1  \partial_td_1^2-\tfrac{1}{2} d_1  \partial_td_2^2\,,\,-\tfrac{1}{2}d_2 \partial_td_1^2-\tfrac{1}{2}d_2 \partial_td_2^2+ \partial_td_2\big)
\\=&
\big(  \partial_td_1-\tfrac{1}{2} d_1  \partial_t(d_1^2- d_2^2)\,,\,-\tfrac{1}{2}d_2 \partial_t(d_1^2-d_2^2)+ \partial_td_2\big)
\\=&
\big(  \partial_td_1-\tfrac{1}{2} d_1  \partial_t(1)\,,\,-\tfrac{1}{2}d_2 \partial_t(1)+ \partial_td_2\big)
\\=&\partial_t\bd.
\end{align*}
Similarly, for $(3)$,
\begin{align*}
\bd^\perp\big( (\mathbf{u}\cdot \nabx) \bd \cdot\bd^\perp\big)
=&
(-d_2\,,\,d_1)(-u_1d_2 \partial_1d_1
-
u_2d_2 \partial_2d_1
+u_1d_1 \partial_1d_2+u_2d_1 \partial_2d_2)
\\
=&
 (u_1d_2^2 \partial_1d_1
+
u_2d_2^2 \partial_2d_1
-u_1d_1d_2 \partial_1d_2-u_2d_1d_2 \partial_2d_2\,,
\\&\qquad
-u_1d_1d_2 \partial_1d_1
-
u_2d_1d_2 \partial_2d_1
+u_1d_1^2 \partial_1d_2+u_2d_1^2 \partial_2d_2
)
\\
=&
 \big(u_1  \partial_1d_1
 -
 \tfrac{1}{2}
 u_1d_1 \partial_1d_1^2
+
u_2  \partial_2d_1
-
 \tfrac{1}{2}
u_2d_1 \partial_2d_1^2
- \tfrac{1}{2}u_1d_1  \partial_1d_2^2- \tfrac{1}{2}u_2d_1  \partial_2d_2^2\,,
\\&\qquad
- \tfrac{1}{2}u_1 d_2 \partial_1d_1^2
-
 \tfrac{1}{2}u_2 d_2 \partial_2d_1^2
+u_1  \partial_1d_2
-
\tfrac{1}{2}u_1d_2 \partial_1d_2^2
+u_2  \partial_2d_2
-
\tfrac{1}{2}u_2d_2\partial_2d_2^2
\big)
\\
=&
 \big(u_1  \partial_1d_1
 -
 \tfrac{1}{2}
 u_1d_1 \partial_1(1)
+
u_2  \partial_2d_1
-
 \tfrac{1}{2}
u_2d_1 \partial_2(1) 
\,,
\\&\qquad
- \tfrac{1}{2}u_1 d_2 \partial_1(1)
-
 \tfrac{1}{2}u_2 d_2 \partial_2(1)
+u_1  \partial_1d_2 
+u_2  \partial_2d_2 
\big)
\\
=&
 \big(u_1  \partial_1d_1 
+
u_2  \partial_2d_1 
\,,\,  
+u_1  \partial_1d_2 
+u_2  \partial_2d_2 
\big)
\\
=&(\mathbf{u}\cdot \nabx) \bd.
\end{align*}
This finishes the proof.
\end{proof}

\section{Proof of Theorem \ref{thm:main}}
Our first goal is to construct a weak solution to a system with a regularized geometry and a linearized advection term. Here, by a  \textit{regularized geometry}, we mean a regularization  of a solution to a \textit{given} shell equation and not the solution to our anticipated shell equation \eqref{shellEq}. Also, we linearized our system by replacing the advecting velocity field with a regularization  of a \textit{given} velocity field. Note that we use the term `linearized' loosely since there remain nonlinear terms with respect to the director field.
This solution we seek will be constructed as the limit $N\rightarrow\infty$ of a finite dimensional Galerkin approximation system incorporating these regularizing terms.

\subsection{Galerkin approximation for linearized system} 
\noindent 

We follow the approach of \cite{lengeler2014weak} and consider a  given structure displacement $\zeta\in C(\overline{I}\times \omega)$  with an initial state $\zeta(0,\cdot)=\eta_0$ and a given driving velocity field $\bv\in L^2_{\divx}(I\times \mathbb{R}^2)$. To ensure that the pair $(\zeta,\bv)$ are sufficiently smooth so that the subsequent analysis are well defined, we consider their spatial regularisation $(\zeta_\kappa,\bv_\kappa)_{\kappa>0}$\footnote{Here, $f_\kappa:=\mathcal{R}_\kappa f$, where the regularising kernels $(\mathcal{R}_\kappa)_{\kappa>0}$ commutes with $\partial_t$. See \cite{lengeler2014weak} for more details.} and assume that they satisfy the interface condition $ \bv_\kappa\circ\bm{\varphi}_{\zeta_\kappa}=\bn\partial_t \zeta_\kappa$ on $I\times\omega$.
Now, for a fixed $\kappa>0$, we seek to construct a triple
$(\eta_\kappa,\bu_\kappa,\bd_\kappa)$ that is a  weak solution of the following system posed on a given regularized geometry, i.e. 
\begin{align}
\divx \bu_\kappa=0
&\qquad \text{in }I \times \Omega_{\zeta_\kappa},
\label{contEqKappaLin}
\\
\partial_t^2\eta_\kappa - \partial_t\naby^2 \eta_\kappa +  \naby^4\eta_\kappa
=
 f(\zeta_\kappa,\bu_\kappa,\pi_\kappa,\bd_\kappa)
&\qquad \text{on }I \times \omega,
\label{shellEqKappaLin}
\\
\partial_t \bu_\kappa  + (\bv_\kappa\cdot \nabx)\bu_\kappa
= 
\divx\mathbb{T}(\bu_\kappa,\pi_\kappa,\bd_\kappa) 
&\qquad \text{in }I \times  \Omega_{\zeta_\kappa},
\label{momEqKappaLin}
\\
\partial_t \bd_\kappa + (\bu_\kappa\cdot \nabx) \bd_\kappa
=
  \Delx \bd_\kappa - \tfrac{1}{\epsilon^2}(\vert \bd_\kappa \vert^2 -1)\bd_\kappa 
&\qquad \text{in }I \times  \Omega_{\zeta_\kappa},
\label{directorEqKappaLin}
\end{align}
where
\begin{align*}
&f(\zeta_\kappa,\bu_\kappa,\pi_\kappa,\bd_\kappa)
=
-(\mathbb{T}(\bu_\kappa, \pi_\kappa,\bd_\kappa )\bn_{\zeta_\kappa }   )\circ\bm{\varphi}_{\zeta_\kappa }
\cdot
\bn 
\,|\partial_{y } \bm{\varphi}_{\zeta_\kappa } | ,
\\&\mathbb{T}(\bu_\kappa, \pi_\kappa,\bd_\kappa )= (\nabx\bu_\kappa + \nabx(\bu_\kappa)^\top)-\pi_\kappa\mathbb{I}_{2\times2}
-  \nabx\bd_\kappa\odot \nabx\bd_\kappa.
\end{align*} 
We complement \eqref{contEqKappaLin}-\eqref{directorEqKappaLin} with the following initial and boundary conditions
\begin{align}
&\eta_\kappa(0, \by) = \eta_{\kappa,0}(\by), \quad \partial_t\eta_\kappa(0, \by) = \eta_{\kappa,*}(\by)
&\quad \text{on }  \omega, 
\\
&\bd_\kappa(0, \bx)=\bd_{\kappa,0}(\bx), \qquad\mathbf{u}_\kappa(0, \cdot) = \mathbf{u}_{\kappa,0}(\bx)
&\quad \text{in }  \Omega_{\eta_0},
\\&\partial_{\bn_{\zeta_\kappa}}\bd_\kappa(t, \bx)=0  
&\quad \text{on }I \times \partial\Omega_{\zeta_\kappa}.
\end{align}  
The dataset $(\eta_{\kappa,0}, \eta_{\kappa,*}, \bu_{\kappa,0},\bd_{\kappa,0} )$ are suitably constructed to satisfy an analogue of the property of the original dataset in \eqref{dataset} with strong convergence in their corresponding spaces. More precisely, we first build a smooth set of initial shell displacements $(\eta_{\kappa,0})_{\kappa>0}$ such that
\begin{align*}
\eta_{\kappa,0}\rightarrow \eta_0 &\qquad\text{in}\qquad   W^{2,2}(\omega)
\end{align*}
as $\kappa\rightarrow0$. Such as regularisation can be done by convolution with the standard mollifier.

Next, we note that since $\eta_*\in L^2(\omega)$ and $\eta_{\kappa,0}\in W^{2,2}(\omega)$, for  $\Vert\eta_{\kappa,0}\Vert_{L^\infty(\omega)}<\alpha<L$, we can use  \cite[Remark 2.24 \& Proposition 2.19]{lengeler2014weak} to conclude that there exists a bounded linear operator that takes $\eta_*$ as input and output a solenoidal vector field $\bm{\psi}_{\kappa,0}\in L^2_{\divx}(\Omega\cup S_{\alpha})$ that satisfies $\bm{\psi}_{\kappa,0}\circ\bm{\varphi}_{\eta_{\kappa,0}}=\eta_*\bn$ on $\omega$. More precisely,
\begin{align*}
\bm{\psi}_{\kappa,0}
:=
\exp\bigg(-\int_{\eta_0}^{\eta_{\kappa,0}} \divx\bn \, (\bm{\varphi}+\tau\bn )\dd\tau \bigg)\eta_*\bn.
\end{align*}
Using this vector field, we build $\bu_{\kappa,0}$ by defining
\begin{align*} 
\bu_{\kappa,0}:=
 \left\{
  \begin{array}{lr}
    \bu_0 & : \text{ in } \Omega_{\eta_0},\\
    \bm{\psi}_{\kappa,0}
     & : \text{ in }\Omega_{\eta_{\kappa,0}}\setminus \Omega_{\eta_0}.
  \end{array}
\right.
\end{align*}
which lead to  $\bu_{\kappa,0}\in L^2_{\divx}(\Omega_{\eta_{\kappa,0}})$  and
\begin{align*} 
  \mathbb{I}_{\Omega_{\eta_{\kappa,0}}} \bu_{\kappa,0} \rightarrow 
   \mathbb{I}_{\Omega_{\eta_0}}  \bu_0
&\qquad\text{in}\qquad L^2(\Omega\cup S_\alpha).
\end{align*}
To now ensure that $\bu_{\kappa,0}\circ\bm{\varphi}_{\eta_{\kappa,0}} =\eta_{\kappa,*}\bn$ hold on $\omega$, the function $\eta_{\kappa,*}$   is now defined as
\begin{align*}
\eta_{\kappa,*}:=
\exp\bigg(-\int_{\eta_0}^{\eta_{\kappa,0}} \divx\bn \, (\bm{\varphi}+\tau\bn )\dd\tau \bigg)\eta_*
\end{align*}
from which the convergence
\begin{align*}
\eta_{\kappa,*}\rightarrow   \eta_*
&\qquad\text{in}\qquad  L^2( \omega)
\end{align*}
also hold.

The director field can be treated in a straightfoward manner as the construction of $\eta_{\kappa,0}$ by convolution with the standard mollifier leading to
\begin{align*} 
  \mathbb{I}_{\Omega_{\eta_{\kappa,0}}} \bd_{\kappa,0} \rightarrow 
   \mathbb{I}_{\Omega_{\eta_0}}  \bd_0
&\qquad\text{in}\qquad W^{1,2}(\Omega\cup S_\alpha).
\end{align*}  
Having constructed the desired dataset, let now make precise, the notion of a weak solution $(\eta_\kappa,\bu_\kappa,\bd_\kappa)$ in this linearized setting.

\begin{definition}[Weak solution] \label{def:weakSolutionKappa}
Let $(\eta_{\kappa,0}, \eta_{\kappa,*}, \bu_{\kappa,0},\bd_{\kappa,0} )$ be a dataset such that
\begin{equation}
\begin{aligned}
\label{datasetKappaLin}
&
\eta_{\kappa,0} \in W^{2,2}(\omega) \text{ with } \Vert \eta_{\kappa,0}\Vert_{L^\infty(\omega)}<L , 
\\
& 
\eta_{\kappa,*} \in L^2(\omega),
\quad 
\bu_{\kappa,0}\in L^2_{\mathrm{\Div}}(\Omega_{\eta_{\kappa,0}})   
\\&
\text{and }\bu_{\kappa,0}\circ\bm{\varphi}_{\eta_{\kappa,0}} =\eta_{\kappa,*}\bn\text{ on $\omega$}, 
\\&
\bd_{\kappa,0} \in W^{1,2}(\Omega_{\eta_{\kappa,0}}),
\quad
\vert \bd_{\kappa,0}\vert\leq1 \text{  in } \Omega_{\eta_{\kappa,0}}.
\end{aligned}
\end{equation} 
We call  
$(\eta_\kappa,\bu_\kappa,\bd_\kappa)$
a weak solution to the system \eqref{contEqKappaLin}--\eqref{directorEqKappaLin} with data $(\eta_{\kappa,0}, \eta_{\kappa,*}, \bu_{\kappa,0},\bd_{\kappa,0} )$ provided that the following holds:
\begin{itemize}
\item[(a)] The structure displacement $\eta_\kappa$ satisfies
\begin{align*}
\eta_\kappa \in W^{1,\infty} \big(I; L^2(\omega) \big)\cap W^{1,2} \big(I; W^{1,2}(\omega) \big)\cap  L^\infty \big(I; W^{2,2}(\omega) \big) 
\quad \text{with} \quad \Vert \eta_\kappa \Vert_{L^\infty(I \times \omega)} <L,
\end{align*}
as well as $\eta_\kappa(0,\by)=\eta_{\kappa,0}(\by)$ and $\partial_t\eta_\kappa(0,\by)=\eta_{\kappa,*}(\by)$.
\item[(b)] The velocity field $\bu$ satisfies
\begin{align*}
 \bu_\kappa \in L^\infty \big(I; L^2(\Omega_{\zeta_\kappa}) \big)\cap  L^2 \big(I; W^{1,2}_{\Div}(\Omega_{\zeta_\kappa}) \big) \quad \text{with} \quad 
\bu_\kappa \circ\bm{\varphi}_{\zeta_\kappa} = \partial_t\eta_\kappa\bn\quad\text{on}\quad I\times\omega,
\end{align*}
as well as $\bu_\kappa(0,\bx)=\bu_{\kappa,0}(\bx)$. 
\item[(c)] The director field $\bd$ satisfies
\begin{align*}
 \bd_\kappa \in L^\infty \big(I; W^{1,2}(\Omega_{\zeta_\kappa}) \big)\cap  L^2 \big(I; W^{2,2} (\Omega_{\zeta_\kappa}) \big)  
 \quad \text{with} \quad 
\vert \bd_\kappa\vert\leq1 \quad\text{in}\quad I\times\Omega_{\zeta_\kappa},
\end{align*}
as well as $\bd_\kappa(0,\bx)=\bd_{\kappa,0}(\bx)$.
\item[(d)] For all  $(\phi, {\bfphi}, \bm{\psi}) \in C^\infty(\overline{I}\times\omega) \otimes C^\infty(\overline{I} ;C^\infty_{\Div}(\R^2)) \otimes C^\infty(\overline{I}\times\R^2) $ with $\phi(T,\cdot)=0$, ${\bfphi}(T,\cdot)=0$ and $\bfphi\circ\bfvarphi_{\zeta_\kappa} =\phi {\bn}$ on $I\times\omega$, we have
\begin{equation}
\begin{aligned}
\label{galerkinweak2}
\int_I\frac{\dd}{\dt}\bigg(\int_{\omega}&  \partial_t \eta_\kappa \, \phi\dy
+
\int_{\Omega_{\zeta_\kappa}}
\bu_\kappa  \cdot \bm{\phi}\dx
+
\int_{\Omega_{\zeta_\kappa}}
\bd_\kappa  \cdot \bm{\psi}  \dx
\bigg)\dt
\\
&=
\int_I
\int_{\omega} \bigg(
\partial_t\eta_\kappa\partial_t\phi
-
\partial_t\naby\eta_\kappa\naby\phi -
\naby^2\phi \,\naby^2\eta_\kappa\bigg)\dy\dt
\\&+
\int_I
\int_{\omega} \bigg(\tfrac{1}{2}\bn_{\zeta_\kappa}\circ\bm{\varphi}_{\zeta_\kappa}\cdot \bn^\top \phi\,  \partial_t\zeta_\kappa \,\partial_t\eta_\kappa  \, 
 |\partial_{y } \bm{\varphi}_{\zeta_\kappa } |
  \bigg)\dy\dt
\\
&+
\int_I
 \int_{\Omega_{\zeta_\kappa}}\Big(  \bu_\kappa\cdot\partial_t \bm{\phi} 
-
\tfrac{1}{2}((\bv_\kappa\cdot\nabx)\bu_\kappa)\cdot \bm{\phi} \Big) \dx\dt
\\
&+
\int_I
 \int_{\Omega_{\zeta_\kappa}}\Big(  
\tfrac{1}{2}((\bv_\kappa\cdot\nabx)  \bm{\phi}) \cdot \bu_\kappa
-\nabx \bu_\kappa:\nabx   \bm{\phi}
+ \nabx \bd_\kappa\odot \nabx\bd_\kappa:\nabx\bm{\phi} \Big) \dx\dt
\\
&+
\int_I
 \int_{\Omega_{\zeta_\kappa}}\Big(  \bd_\kappa\cdot\partial_t\bm{\psi}  
+
\bu_\kappa\otimes \bd_\kappa:\nabx \bm{\psi}
+
\Delx \bd_\kappa\cdot \bm{\psi} - \tfrac{1}{\epsilon^2}( \vert \bd_\kappa\vert^2 - 1)\bd_\kappa\cdot \bm{\psi} \Big) \dx\dt
\end{aligned}
\end{equation}
\item[(e)]  
the estimate 
\begin{equation}
\begin{aligned}
\label{energyEstD1aX} 
 \frac{1}{2}\int_\omega& \big( \vert\partial_t \eta_\kappa \vert^2+ \vert\naby^2\eta_\kappa \vert^2\big)(t) \dy
+ 
 \frac{1}{2}
\int_{\Omega_{\zeta_\kappa}} \Big(\vert   \bu_\kappa  \vert^2
+
\vert\nabx\bd_\kappa  \vert^2
+
\tfrac{1}{2\epsilon^2}(\vert\bd_\kappa \vert^2 - 1)^2\Big)(t)
 \dx
\\& 
+
\int_0^t
\int_\omega \vert\partial_{t'}\naby \eta_\kappa\vert^2\dy \dt'
+
\int_0^t
 \int_{\Omega_{\zeta_\kappa}} \Big( \vert \nabx \bu_\kappa \vert^2
 +
 \big\vert\Delx\bd_\kappa - \tfrac{1}{\epsilon^2}(\vert \bd_\kappa \vert^2 - 1)\bd_\kappa\big\vert^2 
 \Big)
  \dx\dt'
\\&
\leq
 \frac{1}{2}
\int_\omega \big( \vert\eta_{\kappa,*}\vert^2+ \vert\naby^2\eta_{\kappa,0}\vert^2\big) \dy 
+ 
 \frac{1}{2}
\int_{\Omega_{\eta_0}} \Big(\vert   \bu_{\kappa,0}  \vert^2
+
\vert\nabx\bd_{\kappa,0}  \vert^2
+
\tfrac{1}{2\epsilon^2}(\vert\bd_{\kappa,0} \vert^2 - 1)^2\Big) 
 \dx 
\end{aligned}
\end{equation}
holds for all $t\in I$.
\end{itemize}
\end{definition}
Our main result in this section is now given as follows.  
\begin{theorem}
\label{thm:linReg}
Let $\kappa>0$ be fixed.
For a dataset $( \eta_{\kappa,0}, \eta_{\kappa,*}, \bu_{\kappa,0},\bd_{\kappa,0} )$ satisfying \eqref{datasetKappaLin},  a weak solution $(\eta_\kappa,\bu_\kappa,\bd_\kappa)$ of \eqref{contEqKappaLin}--\eqref{directorEqKappaLin} exists.
\end{theorem}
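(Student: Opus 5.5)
We will prove Theorem \ref{thm:linReg} by a Galerkin method on the fixed reference domain, following \cite{lengeler2014weak}. Since $\zeta_\kappa$ is smooth in space, the Hanzawa map $\bm{\Psi}_{\zeta_\kappa}$ is a smooth diffeomorphism $\Omega\to\Omega_{\zeta_\kappa}$ for each time. We will build the coupled velocity--shell basis as in \cite{lengeler2014weak}. Take a basis $(\widehat{\bu}_k,\widehat{\eta}_k)$ on $\Omega\times\omega$ of pairs with $\widehat{\bu}_k$ solenoidal and $\widehat{\bu}_k\circ\bm{\varphi}=\widehat\eta_k\bn$. Transport it to $\Omega_{\zeta_\kappa(t)}$ by the Piola transform composed with the exponential correction used above for $\bm{\psi}_{\kappa,0}$. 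This keeps the fields divergence free and preserves the interface condition $\bu\circ\bm{\varphi}_{\zeta_\kappa}=\partial_t\eta\bn$. For the director we use the Neumann eigenfunctions of $-\Delx$ on $\Omega_{\zeta_\kappa(t)}$ (equivalently, pulled back), or, more simply, pull $\bd_\kappa$ back to $\Omega$ and expand in the Neumann eigenbasis of a fixed elliptic operator there. The ansatz $\bu^N=\sum_k a_k(t)\bm{X}_k(t)$, $\partial_t\eta^N=\sum_k a_k\widehat\eta_k$, $\bd^N=\sum_k b_k(t)\bm{Y}_k(t)$, tested against the same basis, gives an ODE system in $(a_k,b_k)$ with locally Lipschitz coefficients. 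Carathéodory's theorem then yields a local solution.

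Next we derive the uniform estimate \eqref{energyEstD1aX} at the Galerkin level, repeating Section \ref{sec:apriori}. We test with $\bu^N$, $\partial_t\eta^N$ and with the projection of $\Delx\bd^N-\tfrac1{\epsilon^2}(|\bd^N|^2-1)\bd^N$. The antisymmetrized convective term $\tfrac12((\bv_\kappa\cdot\nabx)\bu)\cdot\bfphi-\tfrac12((\bv_\kappa\cdot\nabx)\bfphi)\cdot\bu$ vanishes on the diagonal. The boundary term from Reynolds' theorem, $\tfrac12\int_{\partial\Omega_{\zeta_\kappa}}|\bu|^2\partial_t\zeta_\kappa\bn\cdot\bn_{\zeta_\kappa}$, is exactly cancelled by the extra shell term $\tfrac12\bn_{\zeta_\kappa}\circ\bm{\varphi}_{\zeta_\kappa}\cdot\bn\,\phi\,\partial_t\zeta_\kappa\partial_t\eta_\kappa|\partial_y\bm{\varphi}_{\zeta_\kappa}|$. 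This cancellation is why that term was included in \eqref{galerkinweak2}. The cross term $\nabx\bd\odot\nabx\bd:\nabx\bu$ cancels against the director transport term as in \eqref{energyEstB}, because the director is advected by $\bu_\kappa$ itself. For this cancellation to survive the Galerkin truncation, the eigenfunction basis must be orthogonal and $\Delx$-invariant, which is why we choose Neumann eigenfunctions. The maximum principle of Section \ref{sec:maxPrin} is not available for Galerkin approximants. We therefore first replace the Ginzburg--Landau nonlinearity by a truncated version $\tfrac1{\epsilon^2}(\chi(|\bd|^2)-1)\bd$ with bounded growth. After the limit $N\to\infty$ we recover $|\bd_\kappa|\le1$ by the maximum principle in weak form: test the equation for $|\bd|^2$ with $(|\bd|^2-1)_+$, using the Neumann condition and $\divx\bu_\kappa=0$. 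With $|\bd|\le1$ the truncation is inactive. The energy bound makes the ODE solution global on $I$.

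To pass to the limit $N\to\infty$, the energy gives uniform bounds for $\bu^N$ in $L^\infty L^2\cap L^2W^{1,2}$, $\eta^N$ in $W^{1,\infty}L^2\cap W^{1,2}W^{1,2}\cap L^\infty W^{2,2}$, and $\bd^N$ in $L^\infty W^{1,2}$. We also need $\Delx\bd^N\in L^2L^2$. Combining the dissipation bound on $\Delx\bd^N-\tfrac1{\epsilon^2}(\cdots)$ with the $L^\infty$ bound on the truncated nonlinearity gives $\bd^N\in L^2W^{2,2}$ by elliptic Neumann regularity on the smooth domain $\Omega_{\zeta_\kappa}$. The linear terms pass weakly. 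The main obstacle is the nonlinear terms $\nabx\bd^N\odot\nabx\bd^N:\nabx\bfphi$, $\bu^N\otimes\bd^N$ and the cubic term, which require strong convergence of $\nabx\bd^N$ in $L^2L^2$. We will get this by Aubin--Lions on the pulled-back functions. The time derivative $\partial_t\bd^N$ is bounded in $L^2L^{1}$ or in a dual space, using $\bu^N\cdot\nabx\bd^N\in L^2L^{1}$. Here Ladyzhenskaya's inequality $\|\nabx\bd\|_{L^4}^2\lesssim\|\nabx\bd\|_{L^2}\|\bd\|_{W^{2,2}}$ controls $\nabx\bd\odot\nabx\bd$ in $L^2L^2$, and gives $\bu\cdot\nabx\bd\in L^{4/3}L^{4/3}$ or better. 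Aubin--Lions then gives $\bd^N\to\bd_\kappa$ strongly in $L^2W^{1,2}$. The velocity needs no compactness, since the advecting field $\bv_\kappa$ is given and smooth, and all other velocity terms are linear. The term $\bu^N\otimes\bd^N$ converges as weak times strong. Finally, \eqref{energyEstD1aX} follows by weak lower semicontinuity.
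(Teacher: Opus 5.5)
Your route differs from the paper's in a way that matters, and as written it does not give the $N$-uniform bound. You discretize all three unknowns in one Galerkin scheme and rely on the exact cancellation of $\int\nabx\bd^N\odot\nabx\bd^N:\nabx\bu^N\dx$ against the director transport term. That cancellation requires testing the discrete director equation with $\Delx\bd^N-\epsilon^{-2}(|\bd^N|^2-1)\bd^N$, and on the moving domain no Galerkin space permits this.

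\emph{The time-dependent Neumann eigenbasis.} The eigenfunctions $\bm{Y}_k(t)$ of $-\Delx$ on $\Omega_{\zeta_\kappa(t)}$ depend on $t$ through the spectrum. At eigenvalue crossings they need not be differentiable, or even continuous, in $t$, yet $\partial_t\bm{Y}_k$ enters your ODE. Even for a smooth family, $\partial_t\bd^N=\sum_k(b_k'\bm{Y}_k+b_k\partial_t\bm{Y}_k)$ does not lie in $\mathrm{span}\{\bm{Y}_1(t),\dots,\bm{Y}_N(t)\}$. Testing with $P_N\big(\epsilon^{-2}(|\bd^N|^2-1)\bd^N\big)$ therefore leaves the remainder $\int(I-P_N)\big(\sum_k b_k\partial_t\bm{Y}_k\big)\cdot\epsilon^{-2}(|\bd^N|^2-1)\bd^N\dx$, which the energy does not control uniformly in $N$.

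\emph{The fixed pulled-back basis.} Your fallback, a fixed eigenbasis on $\Omega$ after pull-back, is not invariant under the transformed $\Delx$, contrary to the requirement you state yourself. Testing with $P_N\Delx\bd^N$ then leaves $\int(\bu^N\cdot\nabx)\bd^N\cdot P_N\Delx\bd^N\dx$, which does not cancel the stress term. The dissipation also controls only $\Vert P_N\Delx\bd^N\Vert_{L^2}$, not $\Vert\Delx\bd^N\Vert_{L^2}$.

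\emph{How the paper avoids this.} The paper uses a semi-Galerkin structure in which only $(\eta,\bu)$ is discretized. For a given finite-dimensional pair $(\overline{\eta^N_\kappa},\overline{\bu^N_\kappa})$, equation \eqref{approxDAlone} is solved as a genuine strong solution, so testing it with $\Delx\bd^N_\kappa-\epsilon^{-2}(|\bd^N_\kappa|^2-1)\bd^N_\kappa$ is legitimate. The fluid--structure system is tested only with $(\bu^N_\kappa,\partial_t\eta^N_\kappa)$, which does lie in the span. The coupling is closed by a contraction on a short interval $I_*$, using Ladyzhenskaya's inequality and the equivalence of norms on $\mathrm{span}\{\bm{\psi}_1,\dots,\bm{\psi}_N\}$. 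The energy bound then makes the solution global. Restructuring your argument this way repairs it, and your truncation becomes optional, since a maximum principle applies to the genuine solution $\bd^N_\kappa$.

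Two further remarks. First, in the linearized problem $\partial\Omega_{\zeta_\kappa}$ moves with normal velocity $V_n=(\partial_t\zeta_\kappa\bn)\circ\bm{\varphi}_{\zeta_\kappa}^{-1}\cdot\bn_{\zeta_\kappa}$, whereas $\bu_\kappa\cdot\bn_{\zeta_\kappa}=(\partial_t\eta_\kappa\bn)\circ\bm{\varphi}_{\zeta_\kappa}^{-1}\cdot\bn_{\zeta_\kappa}$. For the velocity the extra shell term compensates, as you note. For the director, however, the Reynolds boundary terms and those produced by $(\bu_\kappa\cdot\nabx)\bd_\kappa$ do not cancel. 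The energy computation keeps $\int_{\partial\Omega_{\zeta_\kappa}}\big(\tfrac12|\nabx\bd_\kappa|^2+\tfrac{1}{4\epsilon^2}(|\bd_\kappa|^2-1)^2\big)(V_n-\bu_\kappa\cdot\bn_{\zeta_\kappa})\,\dd\mathcal{H}^1$. Your Stampacchia test keeps $\tfrac12\int_{\partial\Omega_{\zeta_\kappa}}(|\bd_\kappa|^2-1)_+^2(\bu_\kappa\cdot\bn_{\zeta_\kappa}-V_n)\,\dd\mathcal{H}^1$. Neither term has a sign, so ``Neumann condition and $\divx\bu_\kappa=0$'' is not enough.
\begin{itemize}
\item For the maximum principle this is repairable: use the trace interpolation $\Vert g\Vert_{L^2(\partial\Omega_{\zeta_\kappa})}^2\lesssim\Vert g\Vert_{L^2}\Vert g\Vert_{W^{1,2}}$, the embedding $\partial_t\eta_\kappa\in L^2(I;W^{1,2}(\omega))\hookrightarrow L^2(I;L^\infty(\omega))$, and Gronwall's lemma.
\item For the energy you only get a Gronwall-type bound, possibly only on a short interval (which is enough for the subsequent fixed point), not the exact inequality \eqref{energyEstD1aX}. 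The paper's formal computation passes over these terms as well.
\end{itemize}
Second, your Aubin--Lions step giving $\nabx\bd^N\to\nabx\bd_\kappa$ strongly in $L^2(I;L^2)$ is genuinely needed to pass to the limit in $\nabx\bd^N\odot\nabx\bd^N$ and in the cubic term. The paper lists only weak convergences at that point.
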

 \begin{proof} 
For a fixed $\kappa>0$, the solution $(\eta_\kappa,\bu_\kappa,\bd_\kappa)$ can de derived a limit $N\rightarrow\infty$ of a finite-dimensional triple $(\eta^N_\kappa,\bu^N_\kappa,\bd^N_\kappa)$.
To justify the existence of the triple $(\eta^N_\kappa,\bu^N_\kappa,\bd^N_\kappa)$,  we consider the basis $(\overline{\bm{X}}_n)_{n\in \mathbb{N}}$ and $(\overline{Y}_n)_{n\in \mathbb{N}}$ of $W^{2,2}_{0,\divx}(\Omega)$ and $W^{2,2}(\omega)$, respectively. Then by \cite[Theorem A.3]{lengeler2014weak}, there exists solenoidal vector fields $(\overline{\bm{Y}}_n)_{n\in \mathbb{N}}$ that solve a Stokes system on the fixed reference domain with boundary data $(\bn\overline{Y}_n)_{n\in \mathbb{N}}$. Now, for all $t\in \overline{I}$, we obtain from $(\bm{\overline{X}}_n)_{n\in \mathbb{N}}$, the following basis
\begin{align*}
 \bm{X}_n(t,\cdot):=\mathcal{J}_{\zeta_\kappa(t)}\overline{\bm{X}}_n
\end{align*}
for $W^{2,2}_{0,\divx}(\Omega_{\zeta_\kappa})$ where $\mathcal{J}_{\zeta}$, defined by
\begin{align}
\label{piolaTransform}
\mathcal{J}_{\zeta}\bv
=
\big(\nabx  \bm{\Psi}_{\zeta}(\mathrm{det}\nabx  \bm{\Psi}_{\zeta})^{-1}
\bv
\big)\circ \bm{\Psi}_{\zeta}^{-1}
\end{align}
is the Piola transform of the vector field $\bv$ with respect to the given mapping $\zeta:\omega\rightarrow\mathbb{R}$. The Piola transform is invertible with inverse
\begin{align}
\label{piolaTransformInverse}
\mathcal{J}_{\zeta}^{-1}\bv
=
\big((\nabx  \bm{\Psi}_{\zeta})^{-1}(\mathrm{det}\nabx  \bm{\Psi}_{\zeta}) 
\bv
\big)\circ \bm{\Psi}_{\zeta} 
.
\end{align} 
In order to ensure that the basis for the fluid system matches with the basis for the shell at the fluid-shell interface, we consider the following transform of the  solenoidal vector fields $(\overline{\bm{Y}}_n)_{n\in \mathbb{N}}$ 
\begin{align*}
\bm{Y}_n(t,\cdot):=\mathcal{J}_{\zeta_\kappa(t)}\overline{\bm{Y}}_n.
\end{align*} 
If we now set
\begin{align*}
Y_n(t,\cdot):=|\naby\bm{\varphi}_{\zeta_\kappa(t)}|^{-1}\overline{Y}_n
\end{align*}
we obtain the interface condition
\begin{align*}
Y_n(t,\cdot)\bn=\bm{Y}_n(t,\cdot)\circ\bm{\varphi}_{\zeta_\kappa(t)}.
\end{align*}
Consequently,  we obtain on the moving domain, the extended pair of basis $(\bm{\psi}_n )_{n\in \mathbb{N}}$ and $(\psi_n )_{n\in \mathbb{N}}$ for $W^{2,2}_{0,\divx}(\Omega_{\zeta_\kappa})$ and $W^{2,2}(\omega)$, respectively, where
\begin{equation}
\label{basis}
\bm{\psi}_n  = \left\{
  \begin{array}{lr}
    \bm{X}_n & : n \text{ even},\\
    \bm{Y}_n & : n \text{ odd}.
  \end{array}
\right.
\qquad\text{and}\qquad
\psi_n\bn:=\bm{\psi}_n\circ \bm{\varphi}_{\zeta_\kappa}.
\end{equation}
Having constructed the basis \eqref{basis}, we can employ Picard--Lindel\"of's theorem to find functions $\alpha^N_n\in C^1(\overline{I})$, $n,N\in \mathbb{N}$
satisfying $\sum_{n=1}^N|\alpha^N_n(t)|^2\leq R^2$ for all $t\in \overline{I}$ and for some $R>0$ such that $\bu^N_\kappa:=\sum_{n=1}^N\alpha^N_n\bm{\psi}_n$ and $\eta^N_\kappa=\sum_{n=1}^N\int_0^t\alpha^N_n\psi_n\ds+\eta_{\kappa,0}$ solves\footnote{The dependence of $\bu^N_\kappa$ and $\eta^N_\kappa$ on $\kappa$ follows from the implicit dependence of $\bm{\psi}_n$ and $\psi_n$ on $\kappa$.}
\begin{equation}
\begin{aligned}
\label{galerkinweak1} 
\int_{\omega}&  \partial_t^2 \eta^N_\kappa \, \psi_j\dy
+
\int_{\Omega_{\zeta_\kappa}}
\partial_t\bu^N_\kappa  \cdot  \bm{\psi}_j \dx 
 =-
\int_{\omega} \Big( 
\partial_t\naby\eta^N_\kappa\naby\psi_j 
+
\naby^2\eta^N_\kappa \naby^2 \psi_j \Big)\dy
\\&-
\frac{1}{2}
\int_{\omega}  \bn_{\zeta_\kappa}\circ\bm{\varphi}_{\zeta_\kappa}\cdot \bn^\top \psi_j \,  \partial_t\zeta_\kappa \,\partial_t\eta^N_\kappa  \, 
  |\partial_{y } \bm{\varphi}_{\zeta_\kappa } | \dy
 -
\frac{1}{2}
 \int_{\Omega_{\zeta_\kappa}} 
((\bv_\kappa\cdot\nabx)\bu^N_\kappa)\cdot  \bm{\psi}_j  \dx
\\
&+
 \int_{\Omega_{\zeta_\kappa}}\Big(  
\tfrac{1}{2}((\bv_\kappa\cdot\nabx) \bm{\psi}_j) \cdot \bu^N_\kappa
-\nabx \bu^N_\kappa:\nabx  \bm{\psi}_j  
+  \nabx\bd^N_\kappa\odot \nabx\bd^N_\kappa:\nabx \bm{\psi}_j \Big) \dx
\end{aligned}
\end{equation} 
for all $1\leq j\leq N$ subject to an initial condition $\alpha_n^N(0)$ which is such that
\begin{align*}
&\partial_t\eta_\kappa^N(0,\cdot)\rightarrow \eta_{\kappa,*}(\cdot)&\text{ in } L^2(\omega),
\\
&\bu_\kappa^N(0,\cdot) \rightarrow \bu_{\kappa,0}(\cdot)
&\text{ in } L^2(\Omega_{\zeta_\kappa(0)}),
\end{align*}
and
\begin{align*}
&\Vert \partial_t\eta_\kappa^N(0)\Vert_{L^2(\omega)}\leq \Vert \eta_{\kappa,*}\Vert_{L^2(\omega)}, 
\\&
\Vert \bu_\kappa^N(0)\Vert_{L^2(\Omega_{\zeta_\kappa(0)})} \leq \Vert\bu_{\kappa,0}\Vert_{L^2(\Omega_{\zeta_\kappa(0)})}.
\end{align*}
In particular, we can choose
\begin{align*}
\alpha^N_n(0)=\int_{\Omega_{\zeta_\kappa(0)}}\bu_{\kappa ,0}\cdot\bm{\psi}_n\dx,\qquad n=1,\ldots,N
\end{align*}
and set
\begin{align*}
\partial_t\eta^N_\kappa(0)&=\sum_{n=1}^N\alpha^N_n(0)\psi_n,
\qquad
\bu^N_\kappa (0)
=
\sum_{n=1}^N\alpha^N_n(0) \bm{\psi}_n\dx. 
\end{align*}
In \eqref{galerkinweak1}, $\bd^N_\kappa$ is determined by the equation
\begin{align}
\label{approxDAlone}
\partial_t \bd^N_\kappa + ( \bu^N_\kappa\cdot \nabx) \bd^N_\kappa
=
   \Delx \bd^N_\kappa -\tfrac{1}{\epsilon^2}(\vert \bd^N_\kappa\vert^2 - 1)\bd^N_\kappa 
&\qquad \text{in }I \times  \Omega_{\zeta_\kappa}
\end{align}
subject to the boundary  and  initial conditions 
\begin{align*}
&\partial_{\bn_{\zeta_\kappa}}\bd^N_\kappa=0
&\quad \text{on }I \times \partial\Omega_{\zeta_\kappa},
\\
&\bd^N_\kappa(0, \bx)=\bd_{\kappa,0}(\bx)  
&\quad \text{in }  \Omega_{\zeta_\kappa(0    )},
\end{align*}
respectively.  
Indeed, for a fixed $N\in\mathbb{N}$, suppose that $(\overline{\eta^N_\kappa},\overline{\bu^N_\kappa})$ satisfying
\begin{align*}
&\overline{\eta^N_\kappa}\in W^{1,\infty} \big(I; L^2(\omega) \big)\cap W^{1,2} \big(I; W^{1,2}(\omega) \big)\cap  L^\infty \big(I; W^{2,2}(\omega) \big),
\\
&\overline{\bu^N_\kappa}\in L^\infty \big(I; L^2(\Omega_{\zeta_\kappa}) \big)\cap  L^2 \big(I; W^{1,2}_{\Div}(\Omega_{\zeta_\kappa}) \big)
\\&
\overline{\bu^N_\kappa}:=\sum_{n=1}^N\overline{\alpha^N_n}\bm{\psi}_n\quad \text{ and }\quad \overline{\eta^N_\kappa}=\sum_{n=1}^N\int_0^t\overline{\alpha^N_n}\psi_n\ds+\eta_{\kappa,0}
\end{align*}
is given.  A  solution $\bd^N_\kappa$ 
satisfying
\begin{equation}
\begin{aligned}
\label{approxDAloneX}
\frac{\dd}{\dt}\int_{\Omega_{\zeta_\kappa}} \bd^N_\kappa\cdot \bm{\phi}_i\dx
&=
 \int_{\Omega_{\zeta_\kappa}}\big( \bd^N_\kappa\cdot \partial_t\bm{\phi}_i
+
\overline{\bu^N_\kappa}\otimes \bd^N_\kappa:\nabx \bm{\phi}_i
\big) \dx
\\
&+
 \int_{\Omega_{\zeta_\kappa}}\big( 
\Delx \bd^N_\kappa  - \tfrac{1}{\epsilon^2}(\vert \bd^N_\kappa\vert^2 - 1)\bd^N_\kappa
\big) \cdot  \bm{\phi}_i\dx
\end{aligned}
\end{equation}
for all $1\leq i\leq M$ 
can be constructed as a limit $M\rightarrow\infty$ of, yet again, a finite-dimensional Galerkin approximation $\bd^{N,M}_\kappa$ where  $(\bm{\phi}_i)_{i\in \mathbb{N}}$ is a basis of $W^{1,2}_{0}(\Omega_{\zeta_\kappa})$. Note that unlike the fluid-structure subsystem, here, we do not require the basis to be incompressible nor do we require them to satisfy an interface condition.
\\
By  testing \eqref{approxDAlone} with $\Delx\bd^N_\kappa-\tfrac{1}{\epsilon^2}(\vert \bd^N_\kappa\vert^2 - 1)\bd^N_\kappa$, we find that similar to \eqref{energyEstB},  this solution satisfies 
\begin{align*} 
\int_{\Omega_{\zeta_\kappa}}   
\vert\nabx\bd^N_\kappa(t) \vert^2 \dx
&+
\frac{1}{\epsilon^2} 
\int_{\Omega_{\zeta_\kappa}} 
 (\vert \bd^N_\kappa(t) \vert^2 - 1)^2\dx
 +
 \int_0^t\int_{\Omega_{\zeta_\kappa}}  \big\vert\Delx\bd^N_\kappa -  \tfrac{1}{\epsilon^2}(\vert \bd^N_\kappa \vert^2 - 1)\bd^N_\kappa\big\vert^2  \dx\dt'
 \\&
\leq
 \mathcal{E}(\bd_{\kappa,0} )
 -
  \int_0^t\int_{\Omega_{\zeta_\kappa}}  \nabx \bd^N_\kappa\odot \nabx\bd^N_\kappa:\nabx \overline{\bu^N_\kappa}  \dx\dt'
\end{align*}
for all $t\in I$ where
\begin{align*}
\mathcal{E}(\bd_{\kappa,0} ):=&
 \int_{\Omega_{\zeta_\kappa(0)}}   
\vert\nabx\bd_{\kappa,0} \vert^2 \dx
+
\int_{\Omega_{\zeta_\kappa(0)}} \tfrac{1}{\epsilon^2}
 (\vert \bd_{\kappa,0} \vert^2 - 1)^2\dx 
\end{align*}
and finally, by using the equivalence of norms on finite-dimensional spaces, we obtain from the definition of $\overline{\bu^N_\kappa}$,
\begin{align*}
\bigg\vert
  \int_0^t\int_{\Omega_{\zeta_\kappa}}  \nabx \bd^N_\kappa\odot \nabx\bd^N_\kappa:\nabx \overline{\bu^N_\kappa}  \dx\dt'
  \bigg\vert
&\lesssim
\int_0^t
\Vert \nabx \overline{\bu^N_\kappa}
\Vert_{L^\infty(\Omega_{\zeta_\kappa})}
\Vert\nabx \bd^N_\kappa
\Vert_{L^2(\Omega_{\zeta_\kappa})}^2
\dt'
\\&
\lesssim R^2  \int_0^t\sup_{1\leq n\leq N}
\Vert \nabx\bm{\psi}_n
\Vert_{L^\infty(\Omega_{\zeta_\kappa})}
\Vert\nabx \bd^N_\kappa
\Vert_{L^2(\Omega_{\zeta_\kappa})}^2\dt'
\\&
\lesssim\int_0^t
 \Vert\nabx \bd^N_\kappa
\Vert_{L^2(\Omega_{\zeta_\kappa})}^2\dt'.
\end{align*} 
It follows from Gr\"onwall's lemma that for $c=c(R)$, we have that  the inequality
\begin{equation}
\begin{aligned}
\label{finiteDirector} 
\sup_{t\in I}
\int_{\Omega_{\zeta_\kappa}}   
\vert\nabx\bd^N_\kappa \vert^2 \dx
&+
\frac{1}{\epsilon^2}\sup_{t\in I}
\int_{\Omega_{\zeta_\kappa}} 
 ( \vert \bd^N_\kappa \vert^2 - 1)^2\dx
 +
 \int_{I}\int_{\Omega_{\zeta_\kappa}}  \big\vert\Delx\bd^N_\kappa -  \tfrac{1}{\epsilon^2}( \vert \bd^N_\kappa \vert^2 - 1)\bd^N_\kappa\big\vert^2  \dx\dt
 \\&
 \leq c \,\mathcal{E}(\bd_{\kappa,0})\big(1+T\exp( c T) \big) 
\end{aligned}
\end{equation}
holds.  
Now  note that by the relation $\frac{x^2}{2}-y^2\leq (x-y)^2$ and Agmon's inequality, the estimate 
\begin{align*}
\frac{1}{2}\int_{I}\int_{\Omega_{\zeta_\kappa}}\vert \Delx \bd^N_\kappa\vert^2\dx\dt
&\leq
\int_{I}\int_{\Omega_{\zeta_\kappa}}  \tfrac{1}{\epsilon^4}(\vert \bd^N_\kappa \vert^2 - 1)^2\vert\bd^N_\kappa\vert^2\dx\dt
\\
&\quad
+
\int_{I}
\int_{\Omega_{\zeta_\kappa}}\vert\Delx\bd^N_\kappa -  \tfrac{1}{\epsilon^2}(\vert \bd^N_\kappa \vert^2 - 1)\bd^N_\kappa\big\vert^2\dx\dt
\\
&\lesssim 
\int_{I}\Vert \Delx\bd^N_\kappa\Vert_{L^2(\Omega_{\zeta_\kappa})}\dt
\sup_{t\in I}
\Vert \nabx\bd^N_\kappa\Vert_{L^2(\Omega_{\zeta_\kappa})}
\sup_{t\in I}
\int_{\Omega_{\zeta_\kappa}}  \tfrac{1}{\epsilon^4}(\vert \bd^N_\kappa \vert^2 - 1)^2\dx
\\&+\quad
\int_{I}
\int_{\Omega_{\zeta_\kappa}}\vert\Delx\bd^N_\kappa - \tfrac{1}{\epsilon^2}(\vert \bd^N_\kappa \vert^2 - 1)\bd^N_\kappa\big\vert^2\dx\dt
\\
&\leq
\frac{1}{4}
\int_{I}\Vert \Delx\bd^N_\kappa\Vert_{L^2(\Omega_{\zeta_\kappa})}^2\dt
+c\sup_{t\in I}
\Vert \nabx\bd^N_\kappa\Vert_{L^2(\Omega_{\zeta_\kappa})}^2
\bigg(\sup_{t\in I}
\int_{\Omega_{\zeta_\kappa}}  \tfrac{1}{\epsilon^4}(\vert \bd^N_\kappa \vert^2 - 1)^2\dx\bigg)^2
\\&+\quad
c\int_{I}
\int_{\Omega_{\zeta_\kappa}}\vert\Delx\bd^N_\kappa -   \tfrac{1}{\epsilon^2}(\vert \bd^N_\kappa \vert^2 - 1)\bd^N_\kappa\big\vert^2\dx\dt
\end{align*} 
holds. Therefore, it follows from \eqref{finiteDirector} and Young's inequality that
\begin{align}
\label{finiteDirector1}
\int_{I}\int_{\Omega_{\zeta_\kappa}}\vert \Delx \bd^N_\kappa\vert^2\dx\dt
\lesssim
c(R )\mathcal{E}(\bd_{\kappa,0} )^4\big(1+T\exp( c T) \big) ^4.
\end{align} 
We have thus constructed a solution $\bd^N_\kappa$ of \eqref{approxDAlone} that satisfies
\begin{align}
\label{finiteDirector2}
\bd^N_\kappa\in L^\infty\big(I; W^{1,2}(\Omega_{\zeta_\kappa}\big)
\cap 
L^2\big(I; W^{2,2}(\Omega_{\zeta_\kappa})\big).
\end{align}
Now for the constructed solution $\bd^N_\kappa$ of \eqref{approxDAlone} (for fixed $\kappa>0$) satisfying \eqref{finiteDirector2}, let us now return to the equation \eqref{galerkinweak1}. Since $\bu^N_\kappa:=\sum_{n=1}^N\alpha^N_n\bm{\psi}_n$ and $\eta^N_\kappa=\sum_{n=1}^N\int_0^t\alpha^N_n\psi_n\ds+ \eta_{\kappa,0}$, we can rewrite the following terms in \eqref{galerkinweak1} as follows:
\begin{align*}
 \int_{\Omega_{\zeta_\kappa}}
\partial_t\bu^N_\kappa  \cdot  \bm{\psi}_j \dx
&=
\frac{\dd \alpha_n^N }{\dt}\int_{\Omega_{\zeta_\kappa}}
\bm{\psi}_n  \cdot  \bm{\psi}_j \dx
+ 
 \alpha_n^N
 \int_{\Omega_{\zeta_\kappa}}
\partial_t\bm{\psi}_n  \cdot  \bm{\psi}_j \dx,
\\ 
\int_{\omega}  \partial_t^2 \eta^N_\kappa \, \psi_j\dy
&=
\frac{\dd \alpha_n^N }{\dt}
\int_{\omega} 
   \psi_n \, \psi_j\dy 
+
\alpha_n^N  
\int_{\omega} 
 \partial_t\psi_n \, \psi_j\dy , 
\\
-
\int_{\omega} 
\partial_t\naby\eta^N_\kappa\naby\psi_j\dy
&=
-
\int_\omega
\alpha^N_n \naby\psi_n\naby\psi_j
\dy,
\\
-\int_{\omega}
\naby^2 \psi_j \naby^2\eta^N_\kappa \dy
 &=-
 \int_\omega
\naby^2 \psi_j\int_0^t\alpha^N_n \naby^2\psi_n \ds
  - 
  \int_\omega
\naby^2 \psi_j \naby^2\eta_{\kappa,0}  \dy.
\end{align*} 
If we sum up all the identities, it follows that
\eqref{galerkinweak1} is equivalent to the following finite-dimensional system of integro-differential equations
\begin{equation}
\begin{aligned}
\label{galerkinweak3}
a_{nj}(t)\frac{\dd}{\dt}  \alpha^N_n(t)  
&= 
 \alpha^N_n(t)(b_{nj}(t))   
  + 
   \int_0^t\alpha^N_n(s)(c_{nj}(t,s)) \dd s  
 +
   (d_j(t)) 
   +
   (e_j(t)) 
\end{aligned}
\end{equation}
for all $1\leq j\leq N$ where
\begin{align*}
(a_{nj}(t))
&:=
\int_{\Omega_{\zeta_\kappa}}\bm{\psi}_n \cdot \bm{\psi}_j \dx
+
\int_{\omega}   \psi_n \, \psi_j\dy,
\\
(b_{nj}(t))
&:=
 \int_{\Omega_{\zeta_\kappa}}
 \Big( -\partial_t\bm{\psi}_n\cdot \bm{\psi}_j 
-
\tfrac{1}{2}(\bv_\kappa\cdot\nabx)\bm{\psi}_n\cdot\bm{\psi}_j
+
\tfrac{1}{2}(\bv_\kappa\cdot\nabx) \bm{\psi}_j \cdot \bm{\psi}_n -
  \nabx \bm{\psi}_n:\nabx \bm{\psi}_j \Big) \dx
 \\&
 -
 \int_{\omega} \Big(
 \tfrac{1}{2} \bn_{\zeta_\kappa}\circ\bm{\varphi}_{\zeta_\kappa}\cdot \bn^\top \psi_j \,  \partial_t\zeta_\kappa\, \psi_n \, 
 |\partial_{y } \bm{\varphi}_{\zeta_\kappa } |
 +
 \partial_t\psi_n\psi_j
+
 \naby\psi_n\cdot\naby\psi_j 
 \Big)\dy, 
\\
(c_{nj}(t,s))
&:=
-
\int_{\omega}  
\naby^2 \psi_j(t) \naby^2 \psi_n(s) \dy,
\\
(d_{j}(t))
&:= -
   \int_{\omega}  
\naby^2 \eta_{\kappa,0}\, \naby^2 \psi_j \dy, 
 \\
(e_{j}(t))
&:=  
 \int_{\Omega_{\zeta_\kappa}}  
  \nabx \bd^N_\kappa\odot\nabx \bd^N_\kappa:\nabx \bm{\psi}_j \dx.
\end{align*}  
Now, let us note that it follows from \eqref{finiteDirector} that for any smooth $\bm{\psi}_j$
\begin{equation}
\begin{aligned}
\label{eijEst}
\sup_{t\in I}\sum_{j=1}^N\big\vert (e_{j}(t)) \big\vert
&\lesssim 
\sup_{t\in I}
\int_{\Omega_{\zeta_\kappa}}   
\vert\nabx\bd^N_\kappa \vert^2 \dx
\sup_{1\leq j\leq N}
\Vert\nabx\bm{\psi}_j
\Vert_{L^\infty(\Omega_{\zeta_\kappa})} 
\\&
 \leq 
 c( R)\mathcal{E}(\bd_{\kappa,0}  )\big(1+T\exp( c T) \big) 
\end{aligned}
\end{equation}
and thus, the coefficient $(e_j(t))$ is continuous in time and also trivially Lipschitz continuous with respect to $\alpha^N_n$ since the latter is constant with respect to $\alpha^N_n$. Similarly, by using the smoothness of $\zeta_\kappa, \bv_\kappa$, we obtain continuity in time and Lipschitz continuity in $\alpha^N_n$  of the other coefficients. Furthermore, since the coefficient matrix $(a_{nj}(t))$ is symmetric and positive definite, it is invertible. Consequently, by Picard--Lindel\"of theorem  a unique local (with time $T_1\in(0, T]$, $I_1:=(0,T_1)$ depending on  $  \zeta_\kappa ,\bd_{\kappa,0}  $ and $\bv_\kappa$)  solution $\alpha^N_n\in C^1(\overline{I}_1)$ of \eqref{galerkinweak3} exists for $1\leq n\leq N$. 
  
If we now multiply \eqref{galerkinweak1} by $\alpha_j^N$ so that
$\bu^N_\kappa:=\sum_{n=1}^N\alpha^N_j\bm{\psi}_j$ and $\eta^N_\kappa:=\sum_{n=1}^N\int_0^t\alpha^N_j\psi_j\ds+ \eta_{\kappa,0}$, then just as in \eqref{energyEstC} and \eqref{energyEstA}, we obtain
\begin{equation}
\begin{aligned} 
\label{energyEstAx}
\frac{1}{2}
\frac{\dd}{\dt}
\int_{\Omega_{\zeta_\kappa}} \vert  \bu^N_\kappa \vert^2
 \dx
 &+
\frac{1}{2}
\frac{\dd}{\dt}
\int_\omega \big( \vert\partial_t \eta^N_\kappa\vert^2+ \vert\naby^2\eta^N_\kappa\vert^2\big) \dy
+
 \int_{\Omega_{\zeta_\kappa}}  \vert \nabx \bu^N_\kappa \vert^2
  \dx
  +
\int_\omega \vert\partial_t\naby \eta^N_\kappa\vert^2\dy 
\\&
=  
\int_{\Omega_{\zeta_\kappa}}   \nabx \bd^N_\kappa\odot \nabx\bd^N_\kappa:\nabx \bu^N_\kappa
\dx.
\end{aligned}
\end{equation}
By using Ladyzhenskaya's inequality,  the global bound
\begin{align*}
\int_I\int_{\Omega_{\zeta_\kappa}} &  \nabx \bd^N_\kappa\odot \nabx\bd^N_\kappa:\nabx \bu^N_\kappa
\dx\dt
\lesssim
\int_I\Vert \nabx \bd^N_\kappa\Vert_{L^4(\Omega_{\zeta_\kappa})}^2\Vert\nabx \bu^N_\kappa
\Vert_{L^2(\Omega_{\zeta_\kappa})}
 \dt
\\&\leq
\frac{1}{2}
\int_I\Vert\nabx \bu^N_\kappa
\Vert_{L^2(\Omega_{\zeta_\kappa})}^2
 \dt
 +c
 \int_I\Vert \Delx \bd^N_\kappa\Vert_{L^2(\Omega_{\zeta_\kappa})}^2\sup_{t\in I}\Vert\nabx \bd^N_\kappa
\Vert_{L^2(\Omega_{\zeta_\kappa})}^2
 \dt
\end{align*}
follow.
Thus, if we use \eqref{finiteDirector}-\eqref{finiteDirector1}, we obtain
\begin{equation}
\begin{aligned} 
\label{energyEstAxx} 
\sup_{t\in I}\int_{\Omega_{\zeta_\kappa}} \vert  \bu^N_\kappa \vert^2
 \dx
 &+ 
 \sup_{t\in I}
\int_\omega \big( \vert\partial_t \eta^N_\kappa\vert^2+ \vert\naby^2\eta^N_\kappa\vert^2\big) \dy
+
\int_{I}
 \int_{\Omega_{\zeta_\kappa}}  \vert \nabx \bu^N_\kappa \vert^2
  \dx\dt
  +
\int_{I}\int_\omega \vert\partial_t\naby \eta^N_\kappa\vert^2\dy \dt
\\&
\lesssim   
c(R)\mathcal{E}(\bd_{\kappa,0} )^4\big(1+T\exp( c T) \big) ^4
\\&\quad+
\int_{\Omega_{\zeta_\kappa(0)}} \vert  \bu_{\kappa,0} \vert^2
 \dx
 +
\int_\omega \big( \vert  \eta_{\kappa,*}\vert^2+ \vert\naby^2\eta_{\kappa,0}\vert^2\big) \dy .
\end{aligned}
\end{equation}
By using the finiteness of the right-hand side, we obtain a solution pair 
\begin{align*}
(\bu^N_\kappa,\eta^N_\kappa)\in X^{I}  
\end{align*}
where
\begin{align*}
 X^{I}:=&L^\infty \big(I; L^2(\Omega_{\zeta_\kappa}) \big)\cap  L^2 \big(I; W^{1,2}_{\Div}(\Omega_{\zeta_\kappa}) \big)
 \\&\otimes W^{1,\infty} \big(I; L^2(\omega) \big)\cap W^{1,2} \big(I; W^{1,2}(\omega) \big)\cap  L^\infty \big(I; W^{2,2}(\omega) \big).
\end{align*}
At this point, on the one hand, we have obtained a solution $\bd^N_\kappa$ of the director field for any given fluid-structure pair $(\overline{\eta^N_\kappa},\overline{\bu^N_\kappa})$, say. On the other hand, we have also constructed a fluid-structure pair $(\eta^N_\kappa,\bu^N_\kappa)$ given a director field $\bd^N_\kappa$. We can now use a fixed point argument to get a local solution $(\eta^N_\kappa,\bu^N_\kappa,\bd^N_\kappa)$ for the mutually coupled system.
To do this, for a time $T_*\in(0,T]$, $I_*:=(0,T_*)$ to be determined soon, we consider the solution map $\mathtt{T}=\mathtt{T}_1\circ \mathtt{T}_2: X^{I_*} \rightarrow  X^{I_*}$ where
\begin{align*}
\mathtt{T}(\overline{\eta^N_\kappa},\overline{\bu^N_\kappa})=(\eta^N_\kappa,\bu^N_\kappa), \quad \mathtt{T}_2(\overline{\eta^N_\kappa},\overline{\bu^N_\kappa})=\bd^N_\kappa, \quad \mathtt{T}_1(\bd^N_\kappa)=(\eta^N_\kappa,\bu^N_\kappa).
\end{align*}
Associated with this map is the set
\begin{align*}
B_R^{I_*}:=\Big\{ (\overline{\eta^N_\kappa},\overline{\bu^N_\kappa}) \in X^{\overline{I}_*}&\text{ with } \overline{\bu^N_\kappa}:=\sum_{n=1}^N\overline{\alpha}^N_n\bm{\psi}_n,\, \overline{\eta^N_\kappa}:=\sum_{n=1}^N\int_0^t\overline{\alpha}^N_n\psi_n\ds+\eta^N_{\kappa,0}, \, 
\\&
\text{ such that } \Vert (\overline{\eta^N_\kappa},\overline{\bu^N_\kappa}) \Vert_{X^{I_*}}^2  \leq R^2\text{ for }t\in \overline{I}_* \Big\}
\end{align*}
for $R>0$ large enough and for fixed $\kappa>0$. From  \eqref{energyEstAxx} and \eqref{finiteDirector}, it follows that for $T_*>0$ small enough, $\mathtt{T}:B_R^{I_*}  \subset X^{I_*}  \rightarrow  B_R^{I_*}$.  
Our goal now is to show the contraction property leading to the existence of a unique local solution for the fully coupled system. For this, we consider any two solution pair $(\eta^{N,i}_\kappa,\bu^{N,i}_\kappa)$, $i=1,2$ of the fluid-structure subsystem with datasets $( \eta^N_{\kappa}(0), \partial_t\eta^N_{\kappa}(0), \bu^N_{\kappa,0},\bd^{N,i}_\kappa)$, $i=1,2$, respectively.
Thus, the difference 
\begin{align*}
(\eta^{N,12}_\kappa\,,\, \bu^{N,12}_\kappa)
:=
(\eta^{N,1}_\kappa-\eta^{N,2}_\kappa\, ,\,\bu^{N,1}_\kappa - \bu^{N,2}_\kappa)
\end{align*}
solves 
\begin{align}
\partial_t^2\eta^{N,12}_\kappa - \partial_t\naby^2 \eta^{N,12}_\kappa + \naby^4\eta^{N,12}_\kappa
=
f^{N,12}(\zeta_\kappa,\bu^{N,12}_\kappa,\pi^{N,12}_\kappa,\bd^{N,12}_\kappa)
,
\end{align}
on $I_* \times \omega$
and
\begin{equation}
\begin{aligned}
\partial_t \bu^{N,12}_\kappa  + (\bv_\kappa\cdot \nabx)\bu^{N,12}_\kappa
=&
\divx\mathbb{T}(\bu^{N,12}_\kappa,\pi^{N,12}_\kappa,\bd^{N,12}_\kappa)
\end{aligned}
\end{equation}
in $I_* \times  \Omega_{\zeta_\kappa}$  with zero initial conditions and
where
\begin{align*}
&f^{N,12}(\zeta_\kappa,\bu^{N,12}_\kappa,\pi^{N,12}_\kappa,\bd^{N,12}_\kappa)
=
-(\mathbb{T}(\bu^{N,12}_\kappa, \pi^{N,12}_\kappa,\bd^{N,12}_\kappa )\bn_{\zeta_\kappa })\circ\bm{\varphi}_{\zeta_\kappa }\cdot\bn     
|\partial_{y } \bm{\varphi}_{\zeta_\kappa } |,
\end{align*}
and
\begin{align*}
\mathbb{T}(\bu^{N,12}_\kappa, \pi^{N,12}_\kappa,\bd^{N,12}_\kappa )=&(\nabx\bu^{N,12}_\kappa + \nabx(\bu^{N,12}_\kappa)^\top)-\pi^{N,12}_\kappa\mathbb{I}_{2\times2} 
-
\nabx \bd^{N,12}_\kappa\odot \nabx\bd^{N,1}_\kappa
\\&-
\nabx \bd^{N,2}_\kappa\odot \nabx\bd^{N,12}_\kappa.
\end{align*} 
Similar to \eqref{energyEstAx}, we obtain
\begin{equation}
\begin{aligned} 
\nonumber
\frac{1}{2}
\frac{\dd}{\dt}
\int_{\Omega_{\zeta_\kappa}} \vert  \bu^{N,12}_\kappa \vert^2
 \dx
 &+
\frac{1}{2}
\frac{\dd}{\dt}
\int_\omega \big( \vert\partial_t \eta^{N,12}_\kappa\vert^2+ \vert\naby^2\eta^{N,12}_\kappa\vert^2\big) \dy
\\&+
 \int_{\Omega_{\zeta_\kappa}}  \vert \nabx \bu^{N,12}_\kappa \vert^2
  \dx
  +
\int_\omega \vert\partial_t\naby \eta^{N,12}_\kappa\vert^2\dy 
\\
= & 
\int_{\Omega_{\zeta_\kappa}}   \nabx \bd^{N,1}_\kappa\odot \nabx\bd^{N,12}_\kappa:\nabx \bu^{N,12}_\kappa
\dx
\\&+
\int_{\Omega_{\zeta_\kappa}}   \nabx \bd^{N,2}_\kappa\odot \nabx\bd^{N,12}_\kappa:\nabx \bu^{N,12}_\kappa
\dx
\end{aligned}
\end{equation}
where using Ladyzhenskaya's inequality result in  
\begin{align*}
\int_{I_*}&
\int_{\Omega_{\zeta_\kappa}} (\nabx \bd^{N,1}_\kappa\odot \nabx\bd^{N,12}_\kappa+  \nabx \bd^{N,2}_\kappa\odot \nabx\bd^{N,12}_\kappa):\nabx \bu^{N,12}_\kappa
\dx\dt
\leq
\frac{1}{2}
\int_{I_*}
\Vert\nabx \bu^{N,12}_\kappa\Vert_{L^2(\Omega_{\zeta_\kappa})}^2
\dt
\\&+c
\sup_{t\in I_*}\Big(\Vert\nabx \bd^{N,1}_\kappa\Vert_{L^2(\Omega_{\zeta_\kappa})}^2
+
\Vert\nabx \bd^{N,2}_\kappa\Vert_{L^2(\Omega_{\zeta_\kappa})}^2\Big)
\int_{I_*}
\Vert\Delx \bd^{N,12}_\kappa\Vert_{L^2(\Omega_{\zeta_\kappa})}^2
\dt
\\&+c
\sup_{t\in I_*} \Vert\nabx \bd^{N,12}_\kappa\Vert_{L^2(\Omega_{\zeta_\kappa})}^2 
\int_{I_*}
\Big(
\Vert\Delx \bd^{N,1}_\kappa\Vert_{L^2(\Omega_{\zeta_\kappa})}^2
+
\Vert\Delx \bd^{N,2}_\kappa\Vert_{L^2(\Omega_{\zeta_\kappa})}^2
\Big)
\dt.
\end{align*}
Therefore, since
\begin{align*}
\bd^{N,1}_\kappa,\bd^{N,2}_\kappa\in L^\infty\big(I_*; W^{1,2}(\Omega_{\zeta_\kappa}\big)
\cap 
L^2\big(I_*; W^{2,2}(\Omega_{\zeta_\kappa})\big),
\end{align*}
it follows that
\begin{equation}
\begin{aligned} 
\label{energyEstAxxY} 
\sup_{t\in I_*}\int_{\Omega_{\zeta_\kappa}} \vert  \bu^{N,12}_\kappa  \vert^2
 \dx
 &+ 
 \sup_{t\in I_*}
\int_\omega \big( \vert\partial_t \eta^{N,12}_\kappa\vert^2+ \vert\naby^2\eta^{N,12}_\kappa\vert^2\big) \dy
\\&+
\int_{I_*}
 \int_{\Omega_{\zeta_\kappa}}  \vert \nabx \bu^{N,12}_\kappa \vert^2
  \dx\dt
  +
\int_{I_*}\int_\omega \vert\partial_t\naby \eta^{N,12}_\kappa\vert^2\dy \dt
\\
\lesssim& 
\sup_{t\in I_*} \Vert\nabx \bd^{N,12}_\kappa\Vert_{L^2(\Omega_{\zeta_\kappa})}^2
+
\int_{I_*}
\Vert\Delx \bd^{N,12}_\kappa\Vert_{L^2(\Omega_{\zeta_\kappa})}^2
\dt.
\end{aligned}
\end{equation}
To obtain a contraction estimate for $\bd^{N}_\kappa$,
let us consider any two solutions $(\bd^{N,i}_\kappa)$, $i=1,2$ of  \eqref{approxDAlone} with datasets $( \bd^N_{\kappa,0},  \overline{\bu^{N,i}_\kappa} )$, $i=1,2$, respectively. Then $\bd^{N,12}_\kappa $ solve
\begin{equation}
\begin{aligned} 
\label{bdDiff}
\partial_t \bd^{N,12}_\kappa 
=&
 \Delx \bd^{N,12}_\kappa
    -
    \tfrac{1}{\epsilon^2}( \vert \bd^{N,1}_\kappa\vert^2 - 1)\bd^{N,12}_\kappa
-
   \tfrac{1}{\epsilon^2}\big(\vert \bd^{N,1}_\kappa\vert^2 
   - \vert \bd^{N,2}_\kappa\vert^2\big)\bd^{N,2}_\kappa
   \\& -
  ( \overline{\bu^{N,12}_\kappa}\cdot \nabx) \bd^{N,1}_\kappa
  -
   ( \overline{\bu^{N,2}_\kappa}\cdot \nabx) \bd^{N,12}_\kappa  
\end{aligned}
\end{equation}
in $I_* \times  \Omega_{\zeta_\kappa}$ with zero initial and boundary conditions. 
 By testing \eqref{bdDiff} with $\Delx\bd^{N,12}_\kappa$,   we obtain
\begin{equation}
\begin{aligned}
\label{finiteDirectorx}  
\int_{\Omega_{\zeta_\kappa}}&  
\vert\nabx\bd^{N,12}_\kappa (t)\vert^2 \dx
  +
 \int_0^t\int_ {\Omega_{\zeta_\kappa}} \vert\Delx\bd^{N,12}_\kappa \vert^2  \dx\dt'
 \\&
 \lesssim 
\int_0^t
\int_{ \Omega_{\zeta_\kappa} }    
\frac{1}{\epsilon^2}(\vert \bd^{N,1}_\kappa\vert^2 - 1) \bd^{N,12}_\kappa \cdot \Delx\bd^{N,12}_\kappa
\dx\dt'
\\&
+
\int_0^t
\int_{ \Omega_{\zeta_\kappa} }
\frac{1}{\epsilon^2}\big(\vert \bd^{N,1}_\kappa\vert^2 
   - \vert \bd^{N,2}_\kappa\vert^2\big)\bd^{N,2}_\kappa 
 \cdot
 \Delx\bd^{N,12}_\kappa 
 \dx
\dt'
\\&
+
\int_0^t
\int_{ \Omega_{\zeta_\kappa} }
  ( \overline{\bu^{N,12}_\kappa}\cdot \nabx)  \bd^{N,1}_\kappa \cdot
 \Delx\bd^{N,12}_\kappa
 \dx
\dt'
\\&
+
\int_0^t
\int_{ \Omega_{\zeta_\kappa} }
  ( \overline{\bu^{N,2}_\kappa}\cdot \nabx)  \bd^{N,12}_\kappa \cdot
 \Delx\bd^{N,12}_\kappa
 \dx
\dt'
\end{aligned}
\end{equation}
for all $t\in I_*$.
Firstly, by using  the embedding $L^\infty(I_*;W^{1,2}(\Omega_{\zeta_\kappa}))\hookrightarrow L^\infty(I_*;L^6(\Omega_{\zeta_\kappa}))$, we have that
\begin{align*}
\int_0^t
\int_{ \Omega_{\zeta_\kappa} }    
\frac{1}{\epsilon^2}(\vert \bd^{N,1}_\kappa\vert^2 - 1)&\bd^{N,12}_\kappa \cdot \Delx\bd^{N,12}_\kappa
\dx\dt'
\lesssim
\delta
\int_{I_*}
\Vert \Delx\bd^{N,12}_\kappa\Vert_{L^2(\Omega_{\zeta_\kappa})}^2
\dt 
\\&
+
\big(1+
\sup_{t\in I_*}\Vert  \bd^{N,1}_\kappa\Vert_{W^{1,2}(\Omega_{\zeta_\kappa})}^4
\big)
\int_{I_*}
\Vert  \nabx\bd^{N,12}_\kappa\Vert_{L^2(\Omega_{\zeta_\kappa})}^2
\dt
\end{align*}
for any $\delta>0$. Similarly,  
\begin{align*}
\int_0^t
\int_{ \Omega_{\zeta_\kappa} }
\frac{1}{\epsilon^2}\big(\vert \bd^{N,1}_\kappa\vert^2 
   &- \vert \bd^{N,2}_\kappa\vert^2\big)
   \bd^{N,2}_\kappa \cdot \Delx\bd^{N,12}_\kappa
 \dx
\dt'
\\&
\lesssim
\int_0^t
\int_{ \Omega_{\zeta_\kappa} }
\frac{1}{\epsilon^2}\big(\vert \bd^{N,1}_\kappa\vert 
   + \vert \bd^{N,2}_\kappa\vert\big)\vert \bd^{N,12}_\kappa\vert
   \bd^{N,2}_\kappa \cdot \Delx\bd^{N,12}_\kappa
 \dx
\dt'
\\&
\lesssim
\delta
\int_{I_*}
\Vert \Delx\bd^{N,12}_\kappa\Vert_{L^2(\Omega_{\zeta_\kappa})}^2
\dt 
\\&
+
\big(\sup_{t\in I_*}\Vert  \bd^{N,1}_\kappa\Vert_{W^{1,2}(\Omega_{\zeta_\kappa})}^4
+
\sup_{t\in I_*} \Vert  \bd^{N,2}_\kappa\Vert_{W^{1,2}(\Omega_{\zeta_\kappa})}^4
\big)
\int_{I_*}
\Vert  \nabx\bd^{N,12}_\kappa\Vert_{L^2(\Omega_{\zeta_\kappa})}^2
\dt.
\end{align*} 
For the convective terms, we have
\begin{align*}
\int_0^t
\int_{ \Omega_{\zeta_\kappa} }
  ( \overline{\bu^{N,12}_\kappa}\cdot \nabx)  \bd^{N,1}_\kappa \cdot
 \Delx\bd^{N,12}_\kappa
 \dx
\dt'
\lesssim&
\delta
\int_{I_*}
\Vert \Delx\bd^{N,12}_\kappa\Vert_{L^2(\Omega_{\zeta_\kappa})}^2
\dt 
\\&
+
\int_{I_*}
\Vert \overline{\bu^{N,12}_\kappa}\Vert_{L^\infty(\Omega_{\zeta_\kappa})}^2
\Vert \nabx\bd^{N,1}_\kappa\Vert_{L^2(\Omega_{\zeta_\kappa})}^2
\dt 
\\
\lesssim&
\delta
\int_{I_*}
\Vert \Delx\bd^{N,12}_\kappa\Vert_{L^2(\Omega_{\zeta_\kappa})}^2
\dt 
\\&
+
\sup_{I_*}
\Vert \nabx\bd^{N,1}_\kappa\Vert_{L^2(\Omega_{\zeta_\kappa})}^2
\int_{I_*}
\Vert \overline{\bu^{N,12}_\kappa}\Vert_{L^2(\Omega_{\zeta_\kappa})}^2
\dt 
\end{align*}
by using the equivalence of all norms on $\mathrm{span}\{\bm{\psi}_1, \ldots\bm{\psi}_N\}.$
On the other hand, by using the definition $\overline{\bu^{N,2}_\kappa}:=\sum_{n=1}^N\overline{\alpha^{N,2}_n}\bm{\psi}_n$, we also have
\begin{align*}
\int_0^t
\int_{ \Omega_{\zeta_\kappa} }
  ( \overline{\bu^{N,2}_\kappa}\cdot \nabx)  \bd^{N,12}_\kappa \cdot
 \Delx\bd^{N,12}_\kappa
 \dx
\dt'
\lesssim&
\delta
\int_{I_*}
\Vert \Delx\bd^{N,12}_\kappa\Vert_{L^2(\Omega_{\zeta_\kappa})}^2
\dt 
\\&
+
R^2\sup_{1\leq n\leq N}
\Vert\bm{\psi}_n
\Vert_{L^\infty(\Omega_{\zeta_\kappa})}^2
\int_{I_*}
\Vert \nabx\bd^{N,12}_\kappa\Vert_{L^2(\Omega_{\zeta_\kappa})}^2
\dt .
\end{align*}
Now since
\begin{align}
\bd^{N,1}_\kappa,\bd^{N,2}_\kappa\in L^\infty\big(I_*; W^{1,2}(\Omega_{\zeta_\kappa}\big)
\cap 
L^2\big(I_*; W^{2,2}(\Omega_{\zeta_\kappa})\big),
\end{align}
if we now absorb the small $\delta$-terms into the corresponding term on the left-hand side and use Gr\"onwall's lemma, it follows that
\begin{equation}
\begin{aligned}
\label{finiteDirectorxX} 
\sup_{t\in I_*}
\int_{\Omega_{\zeta_\kappa}}   
\vert\nabx\bd^{N,12}_\kappa  \vert^2 \dx
  &+
 \int_{I_*}\int_{\Omega_{\zeta_\kappa}} \vert\Delx\bd^{N,12}_\kappa \vert^2  \dx\dt 
\\&\lesssim(1+T_*\exp(cT_*)) 
\int_{I_*} 
\Vert \overline{\bu^{N,12}_\kappa}\Vert_{L^2(\Omega_{\zeta_\kappa})}^2 
\dt  
\\&\lesssim(1+T_*\exp(cT_*))T_*
\bigg( 
\sup_{t\in I_*}
\Vert \overline{\bu^{N,12}_\kappa}\Vert_{L^2(\Omega_{\zeta_\kappa})}^2 
+
\int_{I_*} 
\Vert \nabx\overline{\bu^{N,12}_\kappa}\Vert_{L^2(\Omega_{\zeta_\kappa})}^2 
\dt 
\Bigg)
\end{aligned}
\end{equation}
If we now substitute \eqref{finiteDirectorxX} into \eqref{energyEstAxxY}, we obtain 
\begin{equation}
\begin{aligned}
\label{finiteDirectorxXX} 
\Vert ( \eta^N_\kappa , \bu^N_\kappa ) \Vert_{X^{I_*}}^2 
\leq 
\frac{1}{2}\Vert (\overline{\eta^N_\kappa},\overline{\bu^N_\kappa}) \Vert_{X^{I_*}}^2
\end{aligned}
\end{equation}
by choosing $T_*>0$ small enough.
This contraction property completes the proof of the existence of a unique local weak solution $(\eta^N_\kappa,\bu^N_\kappa,\bd^N_\kappa)$ for the fully coupled finite-dimensional system. 
The fact that this solution is global follows from the energy estimate. Similar to \eqref{energyEst}, we obtain
\begin{equation}
\begin{aligned}
\label{energyEstD1a}
\sup_{t\in I} 
&\int_\omega \big( \vert\partial_t \eta^N_\kappa \vert^2+ \vert\naby^2\eta^N_\kappa \vert^2\big)(t) \dy
+
\sup_{t\in I}
\int_{\Omega_{\zeta_\kappa}} \Big(\vert   \bu^N_\kappa   \vert^2
+
\vert\nabx\bd^N_\kappa  \vert^2
+
\tfrac{1}{2\epsilon^2}(\vert\bd^N_\kappa \vert^2 - 1)^2\Big)(t)
 \dx
\\& 
+
\int_I
\int_\omega \vert\partial_t\naby \eta^N_\kappa\vert^2\dy \dt
+
\int_I
 \int_{\Omega_{\zeta_\kappa}} \Big( \vert \nabx \bu^N_\kappa \vert^2
 +
 \big\vert\Delx\bd^N_\kappa - \tfrac{1}{\epsilon^2}(\vert \bd^N_\kappa \vert^2 - 1)\bd^N_\kappa\big\vert^2 
 \Big)
  \dx\dt
\\&
\lesssim 
\int_\omega \big( \vert\partial_t\eta^N_\kappa(0)\vert^2+ \vert\naby^2\eta^N_\kappa(0)\vert^2\big) \dy 
+ 
\int_{\Omega_{\zeta_\kappa(0)}} \Big(\vert   \bu^N_\kappa(0)  \vert^2
+
\vert\nabx\bd^N_\kappa(0)  \vert^2
+
\tfrac{1}{2\epsilon^2}(\vert\bd^N_\kappa(0) \vert^2-1)^2\Big) 
 \dx .
\end{aligned}
\end{equation} 
If we use the strong convergence of the data, 
then we obtain
\begin{align*}
\eta^N_\kappa \rightarrow \eta_\kappa 
&\qquad\text{in}\qquad \big(L^\infty(I;W^{2,2}(\omega )),w^* \big),
\\
\partial_t\eta^N_\kappa \rightarrow \partial_t\eta_\kappa 
&\qquad\text{in}\qquad \big(L^\infty(I;L^2(\omega)),w^*\big) \cap
\big( L^2(I;W^{1,2}(\omega)), w\big),
\\
\bu^N_\kappa \rightarrow\bu_\kappa 
&\qquad\text{in}\qquad \big(L^\infty(I;L^2(\Omega_{\zeta_\kappa})),w^*\big)
\cap
\big( L^2(I;W^{1,2}(\Omega_{\zeta_\kappa})), w\big), 
\\
\bd^N_\kappa \rightarrow \bd_\kappa &\qquad\text{in}\qquad \big(L^\infty(I;W^{1,2}(\Omega_{\zeta_\kappa})),w^*\big)
\cap
\big( L^2(I;W^{2,2}(\Omega_{\zeta_\kappa})),w\big). 
\end{align*}
Furthermore, by using lower semi-continuity of norms,we can use the convergences above to pass to limit in \eqref{galerkinweak1}, \eqref{approxDAloneX}  and
\eqref{energyEstD1a} to complete the proof of Theorem \ref{thm:linReg}. 
 \end{proof}

\subsection{The regularized nonlinear system} 
In the previous section, we passed to the limit $N\rightarrow\infty$ in the Galerkin parameter to get a   weak solution $(\eta_\kappa,\bu_\kappa,\bd_\kappa)$ of the linearized system posed on the given regularized spacetime geometry $I \times  \Omega_{\zeta_\kappa}$. In this section, we use a fixed-point argument to show that $(\eta_\kappa,\bu_\kappa,\bd_\kappa)$ is actually a  weak solution of the nonlinear system posed on the unknown regularized geometry, i.e.  
\begin{align}
\divx \bu_\kappa=0, \label{divfreekNon} 
\\
 \partial_t^2 \eta_\kappa -\partial_t\partial_y^2 \eta_\kappa +  \partial_y^4 \eta_\kappa =   f(\eta_\kappa,\bu_\kappa,\pi_\kappa,\bd_\kappa), \label{shellEqukNon}
\\
\partial_t \bu_\kappa  + (\bu_\kappa\cdot \nabx)\bu_\kappa
= 
  \divx\bT_\kappa(\bu_\kappa,\pi_\kappa,\bd_\kappa) , \label{momEqukNon}
\\
\partial_t \bd_\kappa + (\mathbf{u}_\kappa\cdot \nabx) \bd_\kappa
=
  \Delx \bd_\kappa -\tfrac{1}{\epsilon^2}(\vert \bd_\kappa\vert^2 - 1)\bd_\kappa 
\label{solutekNon}
\end{align}
on $I\times\Omega_{\eta_\kappa}\subset \mathbb R^{1+2}$ where
\begin{align*}
f(\eta_\kappa,\bu_\kappa,\pi_\kappa,\bd_\kappa)=-(\mathbb{T}(\bu_\kappa,\pi_\kappa,\bd_\kappa) \bn_{\eta_\kappa})\circ \bm{\varphi}_{\eta_\kappa}  \cdot \bn\,|\partial_{y } \bm{\varphi}_{\eta_\kappa} |,
\\
\mathbb{T}(\bu_\kappa,\pi_\kappa,\bd_\kappa)= \nabx \bu_\kappa +(\nabx \bu_\kappa)^\top 
-\pi_\kappa\mathbb{I}_{2\times2}
-  \nabx \bd_\kappa\odot \nabx\bd_\kappa
\end{align*}
A weak solution of \eqref{divfreekNon}-\eqref{solutekNon} is defined in analogy with Definition \ref{def:weakSolution}. Our main result now reads. 
 \begin{theorem}
\label{thm:linRegNon}
Let $\kappa>0$ be arbitrary.
For a dataset $( \eta_{\kappa,0}, \eta_{\kappa,\star}, \bu_{\kappa,0}, \bd_{\kappa,0} )$
that satisfies \eqref{datasetKappaLin}, there  exists a  
weak solution $(\eta_\kappa, \bu_\kappa, \bd_\kappa)$ of   \eqref{divfreekNon}--\eqref{solutekNon}.
\end{theorem}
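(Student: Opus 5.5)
We prove Theorem \ref{thm:linRegNon} by a set-valued fixed-point argument in the spirit of \cite{lengeler2014weak}, with Theorem \ref{thm:linReg} as the building block. Fix $\kappa>0$ and consider pairs $(\zeta,\bv)$ in the convex set
\begin{align*}
D:=\Big\{(\zeta,\bv)\in C(\overline{I}\times\omega)\times L^2(I\times\R^2):\ \zeta(0)=\eta_{\kappa,0},\ \Vert\zeta\Vert_{L^\infty(I\times\omega)}\leq \alpha,\ \Vert\zeta\Vert_{C^{0,\theta}}+\Vert \bv\Vert_{L^2}\leq K\Big\},
\end{align*}
where $\bv$ is solenoidal. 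For $(\zeta,\bv)\in D$ we regularise to $(\zeta_\kappa,\bv_\kappa)$. Theorem \ref{thm:linReg} then gives a weak solution $(\eta_\kappa,\bu_\kappa,\bd_\kappa)$ on $I\times\Omega_{\zeta_\kappa}$. We define the set-valued map
\begin{align*}
\mathtt{F}(\zeta,\bv):=\big\{(\eta_\kappa,\ \mathcal{E}_{\zeta_\kappa}\bu_\kappa)\big\},
\end{align*}
where $\mathcal{E}_{\zeta_\kappa}$ is a solenoidal extension to $\R^2$ (for example through the construction of \cite[Prop.~2.19]{lengeler2014weak} outside $\Omega_{\zeta_\kappa}$). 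A fixed point $\eta_\kappa=\zeta$ and $\bu_\kappa=\bv$ on $\Omega_{\eta_\kappa}$ is exactly a weak solution of \eqref{divfreekNon}--\eqref{solutekNon}. Two points need attention here. The convective term in the linear problem is the symmetrised form with $\bv_\kappa$, which coincides with $\bu_\kappa\cdot\nabx\bu_\kappa$ tested with $\bfphi$ once $\bv=\bu_\kappa$ and the interface condition holds. Also, the regularisation acts on the given $(\zeta,\bv)$, so the fixed point is naturally the regularised nonlinear system. This is the object in Theorem \ref{thm:linRegNon}, with $\kappa\to0$ reserved for later.

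The steps, in order, are as follows.

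\textbf{Step 1: a priori bounds independent of $(\zeta,\bv)$.} The energy inequality \eqref{energyEstD1aX} bounds $\eta_\kappa$ in $W^{1,\infty}(I;L^2)\cap L^\infty(I;W^{2,2})$ and $\bu_\kappa$ in $L^\infty L^2\cap L^2W^{1,2}$. The constant depends only on the data and not on $(\zeta,\bv)$. Interpolation gives $\eta_\kappa\in C^{0,\theta}(\overline{I};C^{1,\theta}(\omega))$ with $\Vert\eta_\kappa(t)-\eta_{\kappa,0}\Vert_{L^\infty}\lesssim t^{\theta}$. Hence, after shrinking $T$ to some $T_0$ (depending only on the data and on $\alpha-\Vert\eta_{\kappa,0}\Vert_\infty$), we get $\Vert\eta_\kappa\Vert_{L^\infty}\le\alpha$. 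Choosing $K$ large, $\mathtt{F}$ maps $D$ into $D$ with compact range in $C(\overline{I}\times\omega)$. The compactness of the velocity in $L^2$ is what needs care.

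\textbf{Step 2: closed graph and convex values.} Take $(\zeta^n,\bv^n)\to(\zeta,\bv)$ and $(\eta^n,\bu^n)\in \mathtt{F}(\zeta^n,\bv^n)$. We must show that the limits solve the linear problem for $(\zeta,\bv)$. This requires strong convergence of $\bu^n$ in $L^2$ and of $\nabx\bd^n$ in $L^2$, because the stress term $\nabx\bd\odot\nabx\bd$ is quadratic. For $\bd^n$ I use the $L^2W^{2,2}$ bound together with a bound on $\partial_t\bd^n$ from the equation, of the form $L^{4/3}L^2$ or dual-type via Ladyzhenskaya. An Aubin--Lions argument on moving domains (transported to $\Omega$ by $\bm{\Psi}_{\zeta^n_\kappa}$) then yields $\nabx\bd^n\to\nabx\bd$ in $L^2$. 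For $\bu^n$ and $\partial_t\eta^n$ I use the compactness lemma for fluid–structure velocities of \cite{lengeler2014weak}, which tests with the coupled pair and controls the time regularity of $\int\bu^n\cdot\bfphi+\int\partial_t\eta^n\phi$. Convexity of values is the delicate part, since solutions may not be unique. I would instead either prove uniqueness of the linear problem for fixed $\kappa$, which is plausible since the geometry and advection are smooth, via the difference estimates \eqref{energyEstAxxY}--\eqref{finiteDirectorxX}, and apply Schauder; or replace $\mathtt{F}$ by its closed convex hull, as in \cite{lengeler2014weak}, and invoke Kakutani--Glicksberg--Fan. I would try the uniqueness route first.

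\textbf{Step 3: global extension and main obstacle.} The fixed point gives a solution on $(0,T_0)$. Since $T_0$ depends only on the energy and on the distance of $\Vert\eta\Vert_\infty$ from $L$, we restart from $T_0$ with the new data. The energy inequality stays uniformly bounded, so the iteration continues until $\Vert\eta_\kappa\Vert_\infty$ approaches $L$, i.e. a degeneracy, which is excluded in the statement. The main obstacle I expect is Step 2: obtaining strong $L^2$ compactness of $\nabx\bd^n$ on varying domains, which is needed for the Ericksen stress in the momentum equation and for the energy inequality via lower semicontinuity. The 2D Ladyzhenskaya inequality is essential there to control $\partial_t\bd^n$ and the coupling term.
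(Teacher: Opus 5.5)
Your strategy is the paper's: a set-valued fixed point in the spirit of \cite{lengeler2014weak} on $C(\overline{I}\times\omega)\times L^2$, built on Theorem~\ref{thm:linReg}. Compactness comes from the energy bound via Aubin--Lions and the compactness lemma for coupled fluid--structure velocities, and continuation in time comes from the a priori estimate. You are more careful than the paper, which asserts the fixed point right after the compactness step. It never addresses the closed graph, the strong compactness of $\nabx\bd^n$ needed for the Ericksen stress, or convexity of the values.

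The convexity issue you flag is real, and your fallback does not resolve it. Applying Kakutani--Glicksberg--Fan to the closed convex hull only gives $x\in\overline{\mathrm{co}}\,\mathtt{F}(x)$, which is a limit of convex combinations of solutions. Such a point need not solve anything, because the decoupled problem here is not linear in the unknowns. It keeps $\nabx\bd_\kappa\odot\nabx\bd_\kappa$, the transport $(\bu_\kappa\cdot\nabx)\bd_\kappa$ by the \emph{unknown} velocity, and the Ginzburg--Landau term. In \cite{lengeler2014weak} convexity of the values comes from the linearity of their decoupled problem, and that is not available here.

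So single-valuedness has to carry the argument. It cannot be read off from \eqref{energyEstAxxY}--\eqref{finiteDirectorxX}. Those estimates use norm equivalence on $\mathrm{span}\{\bm{\psi}_1,\ldots,\bm{\psi}_N\}$, namely the bound of $\Vert\overline{\bu^{N,12}_\kappa}\Vert_{L^\infty}$ by $\Vert\overline{\bu^{N,12}_\kappa}\Vert_{L^2}$ and the factor $\sup_n\Vert\bm{\psi}_n\Vert_{L^\infty}$, and these constants depend on $N$. The smoothness of $\zeta_\kappa$ and $\bv_\kappa$ does not help with the two quadratic couplings.

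What should work is a genuinely 2D uniqueness argument at the level of weak solutions:
\begin{enumerate}
\item By Ladyzhenskaya, $\bu_\kappa,\nabx\bd_\kappa\in L^4(I;L^4)$, so $\partial_t\bd_\kappa\in L^2(I;L^2)$. This is better than the $L^{4/3}$ bound you mention, and it is what justifies testing the director difference with $\bd^{12}$ and $-\Delx\bd^{12}$.
\item Test the fluid--shell difference with $(\bu^{12},\partial_t\eta^{12})$. The cross terms are controlled by $\Vert\bu^{12}\Vert_{L^4}\Vert\nabx\bd^{1}\Vert_{L^4}\Vert\Delx\bd^{12}\Vert_{L^2}$, by $\Vert\bu^{2}\Vert_{L^4}\Vert\nabx\bd^{12}\Vert_{L^4}\Vert\Delx\bd^{12}\Vert_{L^2}$, and by $\Vert\nabx\bd^{12}\Vert_{L^4}\Vert\nabx\bd^{i}\Vert_{L^4}\Vert\nabx\bu^{12}\Vert_{L^2}$.
\item Ladyzhenskaya together with Neumann elliptic regularity on the smooth domain $\Omega_{\zeta_\kappa}$ closes a Gr\"onwall argument with the integrable weight $\Vert\nabx\bd^1\Vert_{L^4}^4+\Vert\nabx\bd^2\Vert_{L^4}^4+\Vert\bu^2\Vert_{L^4}^4$. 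The boundary terms from Reynolds' theorem are lower order, since $\partial_t\zeta_\kappa$ is smooth.
\end{enumerate}
With uniqueness, closed graph plus compactness gives continuity, and Schauder's theorem applies.

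Finally, your Step 3 only yields existence up to the first time $\Vert\eta_\kappa\Vert_{L^\infty}$ reaches $L$. The statement as posed requires $\Vert\eta_\kappa\Vert_{L^\infty(I\times\omega)}<L$ on all of $I$, which the energy bound does not provide. The paper's appeal to the a priori estimate glosses over the same point.
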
 
\begin{proof}
We note that the only differences between \eqref{contEqKappaLin}-\eqref{directorEqKappaLin} and the anticipated system \eqref{divfreekNon}-\eqref{solutekNon} consists of the linearisation
by the given velocity $\bv_\kappa$ (rather than $\bu_\kappa$) in the advection term, the stress tensor term on the right-hand side of \eqref{shellEqKappaLin} being transformed by a coordinate transform with respect of $\zeta_\kappa$ (rather than $\eta_\kappa$), and finally the full system posed on $\Omega_{\zeta_\kappa}$ (rather than $\Omega_{\eta_\kappa}$). The proof of Theorem \ref{thm:linRegNon} therefore follows from the construction of a fixed point $\mathtt{T}( \zeta_\epsilon, \bv_\epsilon)=(\eta_\epsilon,\bu_\epsilon)$ of a certain solution map $\mathtt{T}$. Unfortunately, since we are dealing with weak solutions and the anticipated system \eqref{divfreekNon}-\eqref{solutekNon} is nonlinear, we are unable to use a Banach fixed-point-type argument as we did in the previous section. Consequently, we resort to a fixed-point theorem for set-valued mappings which gives the existence (but not uniqueness) of a fixed-point  \cite[Theorem A.4]{lengeler2014weak}. 
For this, with a slight abuse of notation, we (still) consider the interval $I_*:=(0,T_*)$ where $T_*$ is to be chosen later. For a tubular neighbourhood $S_\alpha$ of $\partial\Omega$ with $\alpha\leq L$, we set
\begin{align*}
X:=C(\overline{I}_* \times \omega ) \otimes L^2(I_*;L^2(\Omega\cup S_\alpha))
\end{align*}
and define the ball
\begin{align*}
B_R^X=\big\{ ( {\zeta}_\kappa, { {\bv}}_\kappa)\in X \,:\, {\zeta}_\kappa(0)= {\eta}_{0,\kappa}, \quad   \Vert ( {\zeta}_\kappa, { {\bv}}_\kappa)\Vert_{X}\leq R \big\}
\end{align*}
for $R>0$ large enough and for fixed $\kappa>0$. Now let us consider the solution map $\mathtt{T}:B_R^X \subset X  \rightarrow 2^{B_R^X}$
defined by $\mathtt{T}( {\zeta}_\kappa, { {\bv}}_\kappa)=( {\eta}_\kappa, { {\bu}}_\kappa)$. The critical requirement for a fixed point is to show compactness of the map $\mathtt{T}$. Thus, for a sequence $(\bd^n_\kappa)$ solving  \eqref{approxDAlone}, we consider any sequence $( {\zeta}_\kappa^n, { {\bv}}_\kappa^n)_{n\in \mathbb{N}}\in B_R^X$ with $\mathtt{T}( {\zeta}_\kappa^n, { {\bv}}_\kappa^n)=( {\eta}_\kappa^n, { {\bu}}_\kappa^n)$ (the existence of such a solution map is guaranteed by  \eqref{galerkinweak1} and \eqref{energyEstAxx}). Consequently, due to the regularity of the dataset, we have in particular that 
\begin{equation}
\begin{aligned} 
\label{eneryEstSolvStruAlonesmallN}
\sup_{t\in I}\Big(
\Vert\partial_t \eta^n_\kappa\Vert_{L^2(\omega)}^2 
+  
\Vert\partial_y^2\eta^n_\kappa\Vert_{L^2(\omega)}^2
\Big) 
\lesssim 1
\end{aligned}
\end{equation}
uniformly in $n\in \mathbb{N}$. Given that in dimension $d=1$, the embedding $W^{2,2}(\omega)\hookrightarrow C^{1,\beta}(\omega)$ with $\beta<\frac{1}{2}$ is compact and the embedding $C^{1,\beta}(\omega) \hookrightarrow L^2(\omega)$ is continuous, it follows from Aubin-Lions lemma that
\begin{align}
\label{strongCon0}
\eta^n_\kappa \rightarrow \eta_\kappa
\qquad\text{in}\qquad C(\overline{I}_*;C^{1,\beta} (\omega)).
\end{align}
Also, just as in  \cite[Lemma 6.3]{muha2019existence},
we can use a reformulated Aubin-Lions lemma \cite[Theorem 5.1]{muha2019existence} and the existence of a smooth solenoidal extension operator \cite[Corollary 3.4]{muha2019existence} to also obtain
\begin{align}
 \partial_t\eta^n_\kappa \rightarrow  \partial_t\eta_\kappa
&\qquad\text{in}\qquad L^2(I_*;L^2( \omega)),\label{strongCon1}
\\
  \mathbb{I}_{\Omega_{\zeta^n_\kappa}} \bu^n_\kappa \rightarrow 
   \mathbb{I}_{\Omega_{\zeta_\kappa}}  \bu_\kappa
&\qquad\text{in}\qquad L^2(I_*;L^2(\Omega\cup S_\alpha))
\label{strongCon2}
\end{align}
which finishes the proof of compactness of $\mathtt{T}$. Consequently,  the map
 $\mathtt{T}$ posses a fixed point, i.e., there exists $({\eta}_\kappa, { {\bu}}_\kappa) \in B_R^X$ with  $\mathtt{T}( {\eta}_\kappa, { {\bu}}_\kappa)=( {\eta}_\kappa, { {\bu}}_\kappa)$. The fact that the solution is global follows from  Section \ref{sec:apriori}.
\end{proof}

\subsection{Limits of the regularised system}
\label{sec:limit}
We are now going to pass to the limit $\kappa\rightarrow\infty$ in the regularisation parameter to complete the proof of Theorem \ref{thm:main}. Due to Theorem \ref{thm:linRegNon} (and Section \ref{sec:apriori}), it follows that
\begin{align*}
\eta_\kappa \rightarrow \eta
&\qquad\text{in}\qquad  \big(L^{\infty}(I;W^{2,2}(\omega)), w^*  \big),
\\
\partial_t\eta_\kappa \rightarrow \partial_t\eta
&\qquad\text{in}\qquad
\big(L^{\infty}(I;L^{2}(\omega)),w^*  \big)  \cap \big(L^{2}(I;W^{1,2}(\omega)),w  \big),
\\
\mathbb{I}_{\Omega_{\eta_\kappa}} \bu_\kappa \rightarrow \mathbb{I}_{\Omega_{\eta}} \bu
&\qquad\text{in}\qquad
\big(L^{\infty} \big(I; L^{2}(\Omega\cup S_\alpha)\big),w^*\big)\cap \big(L^2\big(I;W^{1,2}_{\divx}(\Omega\cup S_\alpha) \big), w\big),
\\
\mathbb{I}_{\Omega_{\eta_\kappa}} \bd_\kappa \rightarrow \mathbb{I}_{\Omega_{\eta}} \bd
&\qquad\text{in}\qquad
\big(L^{\infty} \big(I; W^{1,2}(\Omega\cup S_\alpha)\big),w^*\big)\cap \big(L^2\big(I;W^{2,2}(\Omega\cup S_\alpha) \big), w\big).
\end{align*}
Furthermore, just as in \eqref{strongCon0}-\eqref{strongCon2}, we also obtain
\begin{align}
\eta^n_\kappa \rightarrow \eta_\kappa
&\qquad\text{in}\qquad C(\overline{I} ;C^{1,\beta} (\omega))
\\
 \partial_t\eta_\kappa \rightarrow  \partial_t\eta
&\qquad\text{in}\qquad L^2(I ;L^2( \omega)),\label{strongCon1x}
\\
  \mathbb{I}_{\Omega_{\eta_\kappa}} \bu_\kappa \rightarrow 
   \mathbb{I}_{\Omega_{\eta}}  \bu
&\qquad\text{in}\qquad L^2(I ;L^2(\Omega\cup S_\alpha)).
\label{strongCon2x}
\end{align} 
For the director field, since its regularity is strong enough for the equation to hold almost every in space, we can use the equation to conclude that 
\begin{align*} 
\mathbb{I}_{\Omega_{\eta_\kappa}} \partial_t\bd_\kappa \rightarrow \mathbb{I}_{\Omega_{\eta}} \partial_t\bd
&\qquad\text{in}\qquad
  \big(L^2\big(I;L^{2}(\Omega\cup S_\alpha) \big), w\big).
\end{align*}
Thus,  by Aubin--Lions lemma,  the following strong convergence also hold 
\begin{align} 
\label{strongDirecLim0}
\mathbb{I}_{\Omega_{\eta_\kappa}} \bd_\kappa \rightarrow \mathbb{I}_{\Omega_{\eta}} \bd
&\qquad\text{in}\qquad
 C\big(\overline{I};L^{2}(\Omega\cup S_\alpha) \big) 
\cap  L^2\big(I;W^{1,2}(\Omega\cup S_\alpha) \big). 
\end{align} 
The above convergence results allow us to pass to the limit in the weak formulation, the interface condition, and the energy inequality. The proof of Theorem \ref{thm:main} is now done.

\section{Convergence of the Ginzburg--Landau approximation}
As shown in \cite{guillen2002global} for the fixed domain, we wish to now show that the Ginzburg--Landau approximation of the simplified Ericksen--Leslie model by Lin \cite{lin1989nonlinear}  converges to the actual simplified Ericksen--Leslie model in our flexible geometry. More precisely, we want to show that if $(\eta_\epsilon,\bu_\epsilon,\bd_\epsilon)$ is the weak solution constructed earlier, then it converges, as $\epsilon\rightarrow0$, to some $(\eta,\bu,\bd)$ that solves the system
\begin{align} 
\divx \bu=0
&\qquad \text{in }I \times  \Omega_\eta,
\label{contEqX}
\\
 \partial_t^2\eta - \partial_t\naby^2 \eta + \naby^4 \eta = f(\eta,\bu,p,\bd) 
&\qquad \text{on }I \times \omega,
\label{shellEqX}
\\
 \partial_t \bu  + (\bu\cdot \nabx)\bu  
= 
\divx\mathbb{T}(\bu,\pi,\bd) 
&\qquad \text{in }I \times  \Omega_\eta,
\label{momEqX}
\\
\partial_t \bd + (\bu\cdot \nabx) \bd
=
  \Delx \bd +\vert \nabx\bd\vert^2  \bd 
&\qquad \text{in }I \times  \Omega_\eta,
\label{directorEqX}
\end{align}
with initial conditions 
\begin{align}
&\eta(0, \by) = \eta_0(\by), \quad \partial_t\eta(0, \by) = \eta_*(\by)
&\quad \text{on }  \omega,
\label{initialStructureX}
\\
&\bd(0, \bx)=\bd_0(\bx), \qquad\bu(0, \cdot) = \bu_0(\bx)
&\quad \text{in }  \Omega_{\eta_0}.
\label{initialDensityVeloX}
\end{align}
and boundary conditions   
\begin{align}
\label{noSlipX}
& \bu \circ \bm{\varphi}_\eta =\partial_t\eta\bn
&\quad \text{on }I \times \omega, 
\\&\partial_{\bn_\eta}\bd=0  
&\quad \text{on }I \times \partial\Omega_\eta.\label{dstarX}
\end{align} 
By a weak solution, we mean the following precise definition
\begin{definition}[Weak solution] \label{def:weakSolutionX}
Let $(\eta_0, \eta_*, \bu_0,\bd_0 )$ be a dataset such that
\begin{equation}
\begin{aligned}
\label{datasetX}
&
\eta_0 \in W^{2,2}(\omega) \text{ with } \Vert \eta_0\Vert_{L^\infty(\omega)}<L , 
\\
& 
\eta_* \in L^2(\omega),
\quad 
\bu_0\in L^2_{\mathrm{\Div}}(\Omega_{\eta_0}),
\\&
\text{and }\bu_0\circ\bm{\varphi}_{\eta_0} =\eta_*\bn\text{ on $\omega$}, 
\\&
\bd_0 \in W^{1,2}(\Omega_{\eta_0}),
\quad
\vert \bd_0\vert=1 \text{ a.e.  in } \Omega_{\eta_0}.
\end{aligned}
\end{equation} 
We call the triple
$(\eta,\bu,\bd)$
a weak solution to the system \eqref{contEqX}--\eqref{dstarX} with data $(  \eta_0,  \eta_*,\bu_0,\bd_0)$ provided that the following holds:
\begin{itemize}
\item[(a)] The structure displacement $\eta$ satisfies
\begin{align*}
\eta \in W^{1,\infty} \big(I; L^2(\omega) \big)\cap W^{1,2} \big(I; W^{1,2}(\omega) \big)\cap  L^\infty \big(I; W^{2,2}(\omega) \big) 
\quad \text{with} \quad \Vert \eta \Vert_{L^\infty(I \times \omega)} <L,
\end{align*}
as well as $\eta(0,\by)=\eta_0(\by)$ and $\partial_t\eta(0,\by)=\eta_*(\by)$.
\item[(b)] The velocity field $\bu$ satisfies
\begin{align*}
 \bu \in L^\infty \big(I; L^2(\Omega_{\eta}) \big)\cap  L^2 \big(I; W^{1,2}_{\Div}(\Omega_{\eta}) \big) \quad \text{with} \quad 
\bu \circ\bm{\varphi}_\eta = \partial_t\eta\bn\quad\text{on}\quad I\times\omega,
\end{align*}
as well as $\bu(0,\bx)=\bu_0(\bx)$. 
\item[(c)] The director field $\bd$ satisfies
\begin{align*}
 \bd \in L^\infty \big(I; W^{1,2}(\Omega_{\eta}) \big)\cap  L^2 \big(I; W^{2,2}(\Omega_{\eta}) \big)
 \quad \text{with} \quad 
\vert \bd\vert=1 \quad\text{a.e. in}\quad I\times \Omega_\eta,
\end{align*}
as well as $\bd(0,\bx)=\bd_0(\bx)$.
\item[(d)] For all  $(\phi, {\bfphi}, \bm{\psi}) \in C^\infty(\overline{I}\times\omega) \otimes C^\infty(\overline{I} ;C^\infty_{\Div}(\R^2)) \otimes C^\infty(\overline{I}\times\R^2) $ with $\phi(T,\cdot)=0$, ${\bfphi}(T,\cdot)=0$ and $\bfphi\circ\bfvarphi_\eta =\phi {\bn}$ on $I\times\omega$, we have
\begin{align*}
\int_I  \frac{\mathrm{d}}{\dt}&\bigg(\int_\omega \partial_t \eta \, \phi \dy
+
\int_{\Omega_\eta}\bu  \cdot {\bfphi}\dx
+
\int_{\Omega_\eta}\bd \cdot \bm{\psi} \dx
\bigg)\dt 
\\
&=
\int_I \int_\omega \big(\partial_t \eta\, \partial_t\phi-\partial_t\naby\eta\,\naby \phi
-
\naby^2\eta\,\naby^2 \phi \big)\dy\dt
\\&+\int_I  \int_{\Omega_\eta}\big(  \bu\cdot \partial_t  {\bfphi} + \bu \otimes \bu: \nabx{\bfphi} 
 -  
\nabx\bu:\nabx {\bfphi} 
+
\nabx \bd\odot \nabx\bd:\nabx \bfphi \big) \dx\dt
\\&
+\int_I  \int_{\Omega_\eta}\big( \bd\cdot \partial_t\bm{\psi} 
+
\bu\otimes \bd:\nabx \bm{\psi}
+
\Delx \bd\cdot \bm{\psi} +  \vert\nabx \bd\vert^2 \bd\cdot  \bm{\psi}
\big) \dx\dt.
\end{align*}
\item[(e)]  
the estimate 
\begin{equation}
\begin{aligned}
\label{energyEstX}
 \frac{1}{2}\int_\omega& \big( \vert\partial_t \eta \vert^2+ \vert\naby^2\eta \vert^2\big)(t) \dy
+ 
 \frac{1}{2}\int_{\Omega_\eta} \Big(\vert   \bu  \vert^2
+
\vert\nabx\bd  \vert^2 \Big)(t)
 \dx
\\& 
+
\int_0^t
\int_\omega \vert\partial_{t'}\naby \eta\vert^2\dy \dt'
+
\int_0^t
 \int_{\Omega_{\eta(t')}} \Big( \vert \nabx \bu \vert^2
 +
 \big\vert\Delx\bd+ \vert \nabx\bd \vert^2 \bd\big\vert^2 
 \Big)
  \dx\dt' 
\\&\leq
 \frac{1}{2}\int_\omega \big( \vert\eta_*\vert^2+ \vert\naby^2\eta_0\vert^2\big) \dy 
+ 
 \frac{1}{2}
\int_{\Omega_{\eta_0}} \big(\vert   \bu_0  \vert^2
+
\vert\nabx\bd_0  \vert^2 \big) 
 \dx 
\end{aligned}
\end{equation}
holds for all $t\in I$. 
\end{itemize}
\end{definition} 
\begin{remark}
Similarly to Section \ref{sec:apriori}, the energy inequality above is obtained by testing the director field equation \eqref{directorEqX} with $\Delx \bd +\vert \nabx\bd\vert^2  \bd $. However, by using the relation $\vert\bd\vert=1$, it follows that $\partial_t\vert\bd\vert^2=\nabx\vert\bd\vert^2=0$ and so a couple of terms vanishes when the left-hand side of \eqref{directorEqX} is tested.
\end{remark} 
Our second main result reads as follows.
\begin{theorem}
\label{thm:mainX}
For a dataset $( \eta_0, \eta_*, \bu_0,\bd_0 )$ satisfying \eqref{datasetX}, a weak solution $(\eta,\bu,\bd)$ exists.
\end{theorem}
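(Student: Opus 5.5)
We obtain the weak solution of Theorem \ref{thm:mainX} as the limit $\epsilon\rightarrow0$ of the solutions $(\eta_\epsilon,\bu_\epsilon,\bd_\epsilon)$ given by Theorem \ref{thm:main}, all built from the same dataset \eqref{datasetX}. Since $\vert\bd_0\vert=1$, the Ginzburg--Landau term vanishes initially, so the right-hand side of \eqref{energyEst} equals the right-hand side of \eqref{energyEstX} and is independent of $\epsilon$.

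\textbf{Uniform bounds.} From \eqref{energyEst} we get, uniformly in $\epsilon$:
\begin{itemize}
\item $\eta_\epsilon$ bounded in $W^{1,\infty}(I;L^2(\omega))\cap W^{1,2}(I;W^{1,2}(\omega))\cap L^\infty(I;W^{2,2}(\omega))$;
\item $\bu_\epsilon$ bounded in $L^\infty(I;L^2(\Omega_{\eta_\epsilon}))\cap L^2(I;W^{1,2}(\Omega_{\eta_\epsilon}))$;
\item $\bd_\epsilon$ bounded in $L^\infty(I;W^{1,2}(\Omega_{\eta_\epsilon}))$, with $\vert\bd_\epsilon\vert\le1$ by the maximum principle of Section \ref{sec:maxPrin};
\item $\sup_t\int_{\Omega_{\eta_\epsilon}}(\vert\bd_\epsilon\vert^2-1)^2\dx\lesssim\epsilon^2$;
\item $\bm{h}_\epsilon:=\Delx\bd_\epsilon-\tfrac{1}{\epsilon^2}(\vert\bd_\epsilon\vert^2-1)\bd_\epsilon$ bounded in $L^2(I;L^2)$.
\end{itemize}

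There is no uniform $L^2(I;W^{2,2})$ bound on $\bd_\epsilon$. The Agmon argument used for \eqref{finiteDirector1} degenerates like $\epsilon^{-4}$. To avoid this I would take the cross product of $\bm h_\epsilon$ with $\bd_\epsilon$ in 2D, i.e.\ test against $\bd^\perp_\epsilon$. The penalty term drops out, giving
\[
\Delx\bd_\epsilon\cdot\bd_\epsilon^\perp=\divx((\nabx\bd_\epsilon)\bd_\epsilon^\perp)=\bm h_\epsilon\cdot\bd_\epsilon^\perp,
\]
which is bounded in $L^2(I;L^2)$. This is the $\epsilon$-version of Lemma \ref{appen:lemma}(1); the second identity there only used the product rule, not $\vert\bd\vert=1$.

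\textbf{Compactness.} The argument of Section \ref{sec:limit} gives:
\begin{itemize}
\item strong convergence of $\eta_\epsilon$ in $C(\overline I;C^{1,\beta}(\omega))$;
\item strong convergence of $\partial_t\eta_\epsilon$ in $L^2(I;L^2(\omega))$;
\item strong convergence of $\mathbb I_{\Omega_{\eta_\epsilon}}\bu_\epsilon$ in $L^2(I;L^2)$.
\end{itemize}
For the director, the equation gives $\partial_t\bd_\epsilon=\bm h_\epsilon-(\bu_\epsilon\cdot\nabx)\bd_\epsilon$. This is bounded in $L^2(I;L^1)$, hence in $L^2(I;(W^{k,2})^*)$ after extension. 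Aubin--Lions then yields $\bd_\epsilon\to\bd$ strongly in $L^2(I;L^2)$. The bound $\Vert\vert\bd_\epsilon\vert^2-1\Vert_{L^2}\lesssim\epsilon$ forces $\vert\bd\vert=1$ a.e. The remaining bounds give weak-$*$ convergence of $\nabx\bd_\epsilon$ and weak convergence $\bm h_\epsilon\rightharpoonup\bm h$.

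\textbf{Passing to the limit.} The director equation has the equivalent weak form
\[
\partial_t\bd_\epsilon\cdot\bd_\epsilon^\perp+(\bu_\epsilon\cdot\nabx)\bd_\epsilon\cdot\bd_\epsilon^\perp=\divx((\nabx\bd_\epsilon)\bd_\epsilon^\perp).
\]
Tested against $\bm\psi$ after integrating by parts, every term is a product of a weakly convergent factor ($\nabx\bd_\epsilon$, $\partial_t\bd_\epsilon$) and a strongly convergent factor ($\bd_\epsilon^\perp\bm\psi$, $\bu_\epsilon$). It therefore passes to the limit. By Lemma \ref{appen:lemma}(1)--(3) and $\vert\bd\vert=1$, the limit form is equivalent to \eqref{directorEqX}.

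For the momentum equation the term $\nabx\bd_\epsilon\odot\nabx\bd_\epsilon$ needs strong convergence of $\nabx\bd_\epsilon$ in $L^2(I;L^2)$; I expect this to be the main obstacle. My first attempt is an energy-identity argument: show that $\int\vert\nabx\bd_\epsilon\vert^2$ converges to $\int\vert\nabx\bd\vert^2$ by testing with $\bd_\epsilon-\bd$ and using the bound on $\bm h_\epsilon$. One has
\[
\int\!\!\int\nabx\bd_\epsilon:\nabx(\bd_\epsilon-\bd)=-\int\!\!\int\bm h_\epsilon\cdot(\bd_\epsilon-\bd)-\tfrac{1}{\epsilon^2}\int\!\!\int(\vert\bd_\epsilon\vert^2-1)\bd_\epsilon\cdot(\bd_\epsilon-\bd).
\]
The first term vanishes because $\bm h_\epsilon$ is bounded in $L^2$ and $\bd_\epsilon\to\bd$ strongly. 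For the penalty term, since $\vert\bd\vert=1$ and $\vert\bd_\epsilon\vert\le1$,
\[
\bd_\epsilon\cdot(\bd_\epsilon-\bd)\ge\vert\bd_\epsilon\vert^2-\vert\bd_\epsilon\vert\ge-\bigl(1-\vert\bd_\epsilon\vert^2\bigr),
\]
so $-(\vert\bd_\epsilon\vert^2-1)\bd_\epsilon\cdot(\bd_\epsilon-\bd)\ge -\bigl(1-\vert\bd_\epsilon\vert^2\bigr)^2\cdot\tfrac{1}{\epsilon^2}$ per point. This term is controlled but not obviously small, because $\epsilon^{-2}\Vert\vert\bd_\epsilon\vert^2-1\Vert^2_{L^2}$ is only bounded.

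If this sign argument fails, I would fall back on two options:
\begin{itemize}
\item the 2D Struwe/Lin--Liu-type small-energy regularity, or
\item rewriting $\divx(\nabx\bd_\epsilon\odot\nabx\bd_\epsilon)$ via \eqref{dRewritten} as $\tfrac12\nabx\vert\nabx\bd_\epsilon\vert^2+\nabx\bd_\epsilon\Delx\bd_\epsilon$. The gradient part is absorbed into the pressure, and $\nabx\bd_\epsilon\Delx\bd_\epsilon=\nabx\bd_\epsilon\bm h_\epsilon+\tfrac{1}{2\epsilon^2}\nabx(\vert\bd_\epsilon\vert^2-1)^2$, whose last term is again a pressure gradient. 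It then remains to pass to the limit in $\nabx\bd_\epsilon\,\bm h_\epsilon$, a weak--weak product. Here I would use the 2D div--curl / compensated-compactness structure, together with the identity $\bm h_\epsilon\parallel\bd_\epsilon^\perp$ up to an $O(\epsilon)$ error, to identify its limit.
\end{itemize}

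Finally, the interface condition passes to the limit as in Section \ref{sec:limit}. The energy inequality \eqref{energyEstX} follows by dropping the nonnegative penalty term and using weak lower semicontinuity, with $\bm h=\Delx\bd+\vert\nabx\bd\vert^2\bd$ identified via Lemma \ref{appen:lemma}.
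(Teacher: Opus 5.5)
Your proposal has a genuine gap at the point you flag. Nothing in it gives compactness of $\nabx\bd_\epsilon$ in $L^2(I;L^2)$, so you cannot pass to the limit in the Ericksen stress $\int_I\int_{\Omega_{\eta_\epsilon}}\nabx\bd_\epsilon\odot\nabx\bd_\epsilon:\nabx\bfphi\dx\dt$. For the same reason you never reach the regularity $\bd\in L^2(I;W^{2,2}(\Omega_\eta))$ required in (c) of Definition \ref{def:weakSolutionX}: knowing $\Delx\bd+\vert\nabx\bd\vert^2\bd\in L^2(I;L^2)$ gives $\Delx\bd\in L^2(I;L^2)$ only if also $\nabx\bd\in L^4(I\times\Omega_\eta)$.

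Your first attempt points the wrong way. Given weak convergence, strong convergence is equivalent to $\limsup_{\epsilon\to0}\int_I\int\nabx\bd_\epsilon:\nabx(\bd_\epsilon-\bd)\le 0$. So you need an \emph{upper} bound on the penalty contribution, but you derived a lower one. Since $\vert\bd\vert=1$, one has exactly $\bd_\epsilon\cdot(\bd_\epsilon-\bd)=\tfrac12(\vert\bd_\epsilon\vert^2-1)+\tfrac12\vert\bd_\epsilon-\bd\vert^2$. Hence the penalty contribution equals
\[
-\frac{1}{2\epsilon^2}\int_I\int(1-\vert\bd_\epsilon\vert^2)^2\dx\dt+\frac{1}{2\epsilon^2}\int_I\int(1-\vert\bd_\epsilon\vert^2)\,\vert\bd_\epsilon-\bd\vert^2\dx\dt .
\]
The second, nonnegative term is where a concentration of $\vert\nabx\bd_\epsilon\vert^2$ would hide. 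The energy gives only $\Vert\epsilon^{-2}(1-\vert\bd_\epsilon\vert^2)\Vert_{L^\infty(I;L^2)}=O(\epsilon^{-1})$, which does not make it small.

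The two fallbacks are only named. In the second, $\nabx\bd_\epsilon\,\bm h_\epsilon$ is a product of two merely weakly convergent factors. Moreover, $\bm h_\epsilon\cdot\bd_\epsilon=\tfrac12(\partial_t+\bu_\epsilon\cdot\nabx)\vert\bd_\epsilon\vert^2$ is small only in a negative norm, which does not help with such a product. The missing ingredient is an $\epsilon$-uniform mechanism that rules out concentration. One expects none, since in 2D there are no nonconstant finite-energy $S^1$-valued bubbles. Turning that into a uniform small-energy second-order estimate, interior and at the moving Neumann boundary, or into a concentration--cancellation argument for the defect measure, is exactly the work not done.

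The paper avoids this obstacle by asserting the uniform bound \eqref{energyEstZ3} on $\Delx\bd^\epsilon$ in $L^2(I;L^2)$. From it, it gets \eqref{strongDirecLim} by Aubin--Lions and identifies $\nabx\bd\odot\nabx\bd$ directly. However, \eqref{energyEstZ3} comes from $\tfrac{x^2}{2}\le y^2+(x-y)^2$ after discarding $y^2=\epsilon^{-4}(\vert\bd^\epsilon\vert^2-1)^2\vert\bd^\epsilon\vert^2$. The bound $\vert\bd^\epsilon\vert\le1$ does not remove this term, and the energy controls it only by $O(\epsilon^{-2})$ in $L^1(I\times\Omega_{\eta^\epsilon})$. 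Your remark that no uniform $L^2(I;W^{2,2})$ bound is available is therefore correct, and that route cannot be borrowed.

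Similarly, \eqref{almostEv2} applies Lemma \ref{appen:lemma} at $\epsilon>0$ as if $\vert\bd^\epsilon\vert=1$, while only $\vert\bd^\epsilon\vert\le1$ holds. Your $\bd_\epsilon^\perp$-formulation is the correct treatment of the director equation. It uses only the constraint-free identity $\divx((\nabx\bd)\bd^\perp)=\Delx\bd\cdot\bd^\perp$ and imposes $\vert\bd\vert=1$ only in the limit. So the paper does not supply the missing ingredient either; compactness of $\nabx\bd_\epsilon$ is still the step that must be proved.
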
 
\begin{proof}
From Theorem \ref{thm:main},  for a data $(  \eta_0,  \eta_*,\bu_0,\bd_0)$ satisfying \eqref{dataset}, we have found a weak solution $(\eta^\epsilon,\bu^\epsilon,\bd^\epsilon)$ to the system \eqref{contEq}--\eqref{momEq}. In particular, this solution satisfy
\begin{equation}
\begin{aligned}
\label{energyEstZ}
\frac{1}{2}\int_\omega& \big( \vert\partial_t \eta^\epsilon \vert^2+ \vert\naby^2\eta^\epsilon \vert^2\big)(t) \dy
+ 
 \frac{1}{2}
 \int_{\Omega_{\eta^\epsilon}} \Big(\vert   \bu^\epsilon  \vert^2
+
\vert\nabx\bd^\epsilon  \vert^2
+
\tfrac{1}{2\epsilon^2}( \vert\bd^\epsilon \vert^2 - 1)^2\Big)(t)
 \dx
\\& 
+
\int_0^t
\int_\omega \vert\partial_{t'}\naby \eta^\epsilon\vert^2\dy \dt'
+
\int_0^t
 \int_{\Omega_{\eta^\epsilon}} \Big( \vert \nabx \bu^\epsilon \vert^2
 +
 \big\vert\Delx\bd^\epsilon - \tfrac{1}{\epsilon^2}(\vert \bd^\epsilon \vert^2 - 1)\bd^\epsilon\big\vert^2 
 \Big)
  \dx\dt' 
\\&\leq
 \frac{1}{2}\int_\omega \big( \vert\eta_*\vert^2+ \vert\naby^2\eta_0\vert^2\big) \dy 
+ 
 \frac{1}{2}
\int_{\Omega_{\eta_0}} \Big(\vert   \bu_0  \vert^2
+
\vert\nabx\bd_0  \vert^2
+
\tfrac{1}{2\epsilon^2}(\vert\bd_0 \vert^2 - 1)^2\Big) 
 \dx 
\end{aligned}
\end{equation}
for all $t\in I$. Since $\vert \bd^\epsilon_0\vert \leq1$ in $\Omega_{\eta_0}$, it follows that
 \begin{align}
 \label{energyEstZ1}
\frac{1}{\epsilon^2}(\vert \bd^\epsilon_0\vert^2-1)\leq 0
\end{align}
holds uniformly in $\epsilon$  and from the maximum principle, Section \ref{sec:maxPrin}, we also that
\begin{align}
\label{energyEstZ2}
\vert \bd^\epsilon\vert \leq1\qquad \text{in}\qquad I\times\Omega_{\eta^\epsilon}.
\end{align}
Furthermore, by the  relation $\frac{x^2}{2}\leq y^2+(x-y)^2$ (where $y:=\tfrac{1}{\epsilon^2}(\vert \bd^\epsilon \vert^2 - 1)\bd^\epsilon$) and \eqref{energyEstZ2}, the estimate 
\begin{equation}
\begin{aligned}
\label{energyEstZ3}
\frac{1}{2}\int_0^t\int_{\Omega_{\zeta^\epsilon}}\vert \Delx \bd^\epsilon\vert^2\dx\dt'
&\leq 
\int_0^t
\int_{\Omega_{\eta^\epsilon}}\vert\Delx\bd^\epsilon -  \tfrac{1}{\epsilon^2}(\vert \bd^\epsilon \vert^2 - 1)\bd^\epsilon\big\vert^2\dx\dt'
\end{aligned}
\end{equation}
hold.
Consequently, if follows from the estimates \eqref{energyEstZ}-\eqref{energyEstZ3} that the shell displacement satisfy
\begin{align*}
\eta^\epsilon \rightarrow \eta
&\qquad\text{in}\qquad  \big(L^{\infty}(I;W^{2,2}(\omega)), w^*  \big),
\\
\partial_t\eta^\epsilon \rightarrow \partial_t\eta
&\qquad\text{in}\qquad
\big(L^{\infty}(I;L^{2}(\omega)),w^*  \big)  \cap \big(L^{2}(I;W^{1,2}(\omega)),w  \big),
\end{align*}
and by Aubin--Lions lemma, the following strong convergence hold
\begin{align}
\label{strongetaLim}
\eta^\epsilon \rightarrow \eta
&\qquad\text{in}\qquad  C(\overline{I};C^{1,\beta} (\omega)), \qquad \beta<\frac{1}{2}.
\end{align}
For the fluid's velocity, the energy inequality \eqref{energyEstZ} give
\begin{align*} 
\mathbb{I}_{\Omega_{\eta^\epsilon}} \bu^\epsilon \rightarrow \mathbb{I}_{\Omega_{\eta}} \bu
&\qquad\text{in}\qquad
\big(L^{\infty} \big(I; L^{2}(\Omega\cup S_\alpha)\big),w^*\big)\cap \big(L^2\big(I;W^{1,2}_{\divx}(\Omega\cup S_\alpha) \big), w\big).
\end{align*}
Furthermore, by making $\partial_t \bu^\epsilon$ the subject in the momentum equation, we are able to bound the other terms uniformly in $\epsilon>0$ in a suitable negative Sobolev space. More precisely, with the help of  the energy inequality  and Ladyzhenskaya's inequality, we obtain
\begin{align*} 
\mathbb{I}_{\Omega_{\eta^\epsilon}} \partial_t\bu^\epsilon \rightarrow \mathbb{I}_{\Omega_{\eta}} \partial_t\bu
&\qquad\text{in}\qquad
  \big(L^2\big(I;W^{-1,2}(\Omega\cup S_\alpha) \big), w\big), 
\end{align*}
and thus, by Aubin--Lions lemma,
\begin{align*} 
\mathbb{I}_{\Omega_{\eta^\epsilon}} \bu^\epsilon \rightarrow \mathbb{I}_{\Omega_{\eta}} \bu
&\qquad\text{in}\qquad
 L^2\big(I;L^{2}(\Omega\cup S_\alpha) \big). 
\end{align*}
For the director field, the estimates \eqref{energyEstZ}-\eqref{energyEstZ3} give
\begin{align}  \label{dweakEp}
\mathbb{I}_{\Omega_{\eta^\epsilon}} \bd^\epsilon \rightarrow \mathbb{I}_{\Omega_{\eta}} \bd
&\qquad\text{in}\qquad
\big(L^{\infty} \big(I; W^{1,2}(\Omega\cup S_\alpha)\big),w^*\big)
\cap \big(L^2\big(I;W^{2,2}(\Omega\cup S_\alpha) \big), w\big).
\end{align}
so that similar to the velocity field, we can make $\partial_t \bd^\epsilon$ the subject in director field equation and obtain
\begin{align*} 
\mathbb{I}_{\Omega_{\eta^\epsilon}} \partial_t\bd^\epsilon \rightarrow \mathbb{I}_{\Omega_{\eta}} \partial_t\bd
&\qquad\text{in}\qquad
  \big(L^2\big(I;L^{2}(\Omega\cup S_\alpha) \big), w\big).
\end{align*}
Thus,  by Aubin--Lions lemma,  the following strong convergence hold 
\begin{align} 
\label{strongDirecLim}
\mathbb{I}_{\Omega_{\eta^\epsilon}} \bd^\epsilon \rightarrow \mathbb{I}_{\Omega_{\eta}} \bd
&\qquad\text{in}\qquad
 C\big(\overline{I};L^{2}(\Omega\cup S_\alpha) \big) 
\cap  L^2\big(I;W^{1,2}(\Omega\cup S_\alpha) \big). 
\end{align}
We also note  that the following can  be deduced from the  estimates \eqref{energyEstZ}-\eqref{energyEstZ3}
\begin{align*}   
\mathbb{I}_{\Omega_{\eta^\epsilon}}\Big( \Delx\bd^\epsilon - \frac{1}{\epsilon^2}(\vert \bd^\epsilon \vert^2 - 1)\bd^\epsilon\Big)
\rightarrow \mathbb{I}_{\Omega_{\eta}} \be
&\qquad\text{in}\qquad
\big(L^{2} \big(I; L^{2}(\Omega\cup S_\alpha)\big),w\big),  
\\
\mathbb{I}_{\Omega_{\eta^\epsilon}}  \nabx\bd^\epsilon \odot  \nabx\bd^\epsilon 
\rightarrow \mathbb{I}_{\Omega_{\eta}}\mathbb{M}
&\qquad\text{in}\qquad
\text{measure}.
\end{align*}
\subsection*{Identifying the quadratic forcing term}
By making use of the earlier convergences for the director field, we can appropriately identify the limits $\be$ and $\mathbb{M}$. To identify the latter one, we first define the following with respect to the Hazawa transform:
\begin{equation}
 \begin{aligned}\label{transHan}
\overline{\bd}^\epsilon:= \bd^\epsilon\circ\bm{\Psi}_{\eta^\epsilon-\eta}, \qquad  
\bm{\phi}^\epsilon=\overline{\bm{\phi}} \circ\bm{\Psi}_{\eta^\epsilon-\eta}^{-1},\qquad J_{\eta^\epsilon-\eta}=\mathrm{det}(\nabx \bm{\Psi}_{\eta^\epsilon-\eta}),
\\
\mathbb{A}_{\eta^\epsilon-\eta}:=( \nabx\bm{\Psi}_{\eta^\epsilon-\eta}^{-1}\circ\bm{\Psi}_{\eta^\epsilon-\eta})^\top\mathbb{B}_{\eta^\epsilon-\eta},\qquad
\mathbb{B}_{\eta^\epsilon-\eta}:=J_{\eta^\epsilon-\eta}\nabx\bm{\Psi}_{\eta^\epsilon-\eta}^{-1}\circ\bm{\Psi}_{\eta^\epsilon-\eta},
\end{aligned}
\end{equation}
where $\overline{\bm{\phi}}\in C^\infty(\overline{I}\times\R^2)$. Then by using the Hanzawa transform to map  $\Omega_{\eta^\epsilon}$ into $\Omega_{\eta}$, we obtain
\begin{align*}
\int_0^t  \int_{\Omega_{\eta^\epsilon}}
\nabx\bd^\epsilon \odot  \nabx\bd^\epsilon :\nabx \bm{\phi}^\epsilon  \dx\dt'
=&
\int_0^t  \int_{\Omega_{\eta }}
\mathbb{A}_{\eta^\epsilon-\eta}
\nabx\overline{\bd}^\epsilon \odot 
 J_{\eta^\epsilon-\eta}^{-1} \mathbb{B}_{\eta^\epsilon-\eta}\nabx\overline{\bd}^\epsilon :\nabx \overline{\bm{\phi}}   \dx\dt'
\\=&
\int_0^t  \int_{\Omega_{\eta }}
\big( 
\mathbb{A}_{\eta^\epsilon-\eta}
\nabx\overline{\bd}^\epsilon 
-
\nabx\bd\big)\odot  J_{\eta^\epsilon-\eta}^{-1} \mathbb{B}_{\eta^\epsilon-\eta}\nabx\overline{\bd}^\epsilon :\nabx \overline{\bm{\phi}}   \dx\dt'
\\&+
\int_0^t  \int_{\Omega_{\eta }}
\nabx\bd \odot \big( J_{\eta^\epsilon-\eta}^{-1} \mathbb{B}_{\eta^\epsilon-\eta}\nabx\overline{\bd}^\epsilon 
-
\nabx\bd\big):\nabx \overline{\bm{\phi}}  \dx\dt'
\\&+
\int_0^t  \int_{\Omega_{\eta }}
\nabx\bd \odot  
\nabx\bd :\big(\nabx \overline{\bm{\phi}}  -\nabx  \bm{\phi} \big) \dx\dt'
\\&+
\int_0^t  \int_{\Omega_{\eta }}
\nabx\bd \odot  
\nabx\bd : \nabx  \bm\phi  \dx\dt'
\\&=:I_1+\ldots+I_4.
\end{align*} 
where $\bm{\phi}$ is taken to be the uniform limit of $\bm{\phi}^\epsilon$ as $\epsilon\rightarrow0$. Notice that the Hanzawa transform $\bm{\Psi}_\zeta$, c.f. \eqref{Hanz}, converges to the identity map as $\zeta$ convergences to zero uniformly. Consequently, with \eqref{strongetaLim} in hand, the uniform limit $\bm{\phi}$ can be identified with $\overline{\bm{\phi}}$.

With the decomposition made above, we have
\begin{align*}
I_1&\lesssim
\int_0^t
\Vert 
\mathbb{A}_{\eta^\epsilon-\eta}
\nabx\overline{\bd}^\epsilon 
-
\nabx\bd\Vert_{L^2(\Oeta)}  \Vert \eta^\epsilon-\eta\Vert_{W^{1,4}(\omega)}
\Vert\nabx\overline{\bd}^\epsilon\Vert_{L^4(\Oeta)} 
\Vert\nabx \overline{\bm\phi}^\epsilon \Vert_{L^\infty(\Oeta)} \dt'
\\
&\lesssim 
\sup_{t\in I}  \Vert \eta^\epsilon-\eta\Vert_{W^{5/2,2}(\omega)}
\bigg(\int_I\Vert \overline{\bd}^\epsilon\Vert_{W^{2,2}(\Oeta)}^2\dt\bigg)^{1/2} 
\\
& \quad\times\bigg(\int_I
\Vert 
\mathbb{A}_{\eta^\epsilon-\eta}
\nabx\overline{\bd}^\epsilon 
-
\nabx\bd\Vert_{L^2(\Oeta)}^2
\Vert\nabx \overline{\bm\phi}^\epsilon \Vert_{W^{3,2}(\Oeta)}^2 \dt\bigg)^{1/2}.
\end{align*} 
By using \eqref{energyEstZ}-\eqref{strongetaLim}, \eqref{strongDirecLim} and the smoothness of the test function, it follows that
\begin{align*}
I_1\rightarrow0 \qquad\text{as}\qquad\epsilon\rightarrow0.
\end{align*}
Similarly, we can show that
\begin{align*}
I_2,I_3\rightarrow0 \qquad\text{as}\qquad\epsilon\rightarrow0 
\end{align*}
so that
\begin{align*}
\lim_{\epsilon\rightarrow0}
\int_0^t  \int_{\Omega_{\eta^\epsilon}}
\nabx\bd^\epsilon \odot  \nabx\bd^\epsilon :\nabx \bm{\phi}^\epsilon  \dx\dt'
=
\int_0^t  \int_{\Omega_{\eta }}
\nabx\bd \odot  
\nabx\bd : \nabx  \bm\phi  \dx\dt'. 
\end{align*}
This completes the identification of the quadratic nonlinear forcing term with respect to the director field.
\subsection*{Identifying the right-hand side of the director field.}
To identify $\be$ with $\Delx\bd+|\nabx \bd|^2\bd$,
we first note that due to the (strong) regularity of the director field obtained from the estimates \eqref{energyEstZ}-\eqref{energyEstZ3}, the equation
\begin{align}
\label{almostEv}
\partial_t \bd^\epsilon + (\mathbf{u}^\epsilon\cdot \nabx) \bd^\epsilon
=
 \Delx \bd^\epsilon -\frac{1}{\epsilon^2}(\vert \bd^\epsilon\vert^2 - 1)\bd^\epsilon
\end{align}
hold a.e. in $I\times\Omega_{\eta^\epsilon}$. Since $\bd\cdot\bd^\perp=0$, it follows that
\begin{align}
\label{almostEv1}
 {\bd^\epsilon}^\perp\big(\partial_t \bd^\epsilon\cdot{\bd^\epsilon}^\perp + (\mathbf{u}^\epsilon\cdot \nabx) \bd^\epsilon\cdot{\bd^\epsilon}^\perp
\big)
=
 {\bd^\epsilon}^\perp(\Delx \bd^\epsilon \cdot{\bd^\epsilon}^\perp)
\end{align}
a.e. $I\times\Omega_{\eta^\epsilon}$. Moreover, by Lemmas \ref{appen:lemma}, the equation \eqref{almostEv1} is equivalent to 
\begin{align}
\label{almostEv2}
\partial_t \bd^\epsilon + (\mathbf{u}^\epsilon\cdot \nabx) \bd^\epsilon
=
 \Delx \bd^\epsilon + \vert \nabx\bd^\epsilon\vert^2 \bd^\epsilon,
 \qquad
 | \bd^\epsilon|=1
\end{align}
a.e. $I\times\Omega_{\eta^\epsilon}$. Beside the term $\Delx \bd^\epsilon$, we can now use the strong convergence results to identify all other terms in \eqref{almostEv2} almost everywhere. The limit of this last term can be identified by making use of an analogy of the transform in \eqref{transHan}.  We get
\begin{align*}
\int_0^t  \int_{\Omega_{\eta^\epsilon}}
\Delx\bd^\epsilon \cdot \bm{\psi}^\epsilon  \dx\dt'
=&
-
\int_0^t  \int_{\Omega_{\eta^\epsilon}}
\nabx\bd^\epsilon :\nabx \bm{\psi}^\epsilon  \dx\dt'
\\=& -
\int_0^t  \int_{\Omega_{\eta }} 
\mathbb{A}_{\eta^\epsilon-\eta}
\nabx\overline{\bd}^\epsilon:\nabx\overline{\bm{\psi}}   \dx\dt' 
\\=& -
\int_0^t  \int_{\Omega_{\eta }} 
(\mathbb{A}_{\eta^\epsilon-\eta}
\nabx\overline{\bd}^\epsilon-\nabx\bd):\nabx\overline{\bm{\psi}}  \dx\dt' 
\\&\quad -
\int_0^t  \int_{\Omega_{\eta }} 
\nabx\bd :(\nabx\overline{\bm{\psi}} -\nabx \bm{\psi} ) \dx\dt' 
\\&\quad +
\int_0^t  \int_{\Omega_{\eta }} 
\Delx\bd \cdot \bm{\psi}  \dx\dt' 
\\=&J_1+J_2+J_3
\end{align*} 
The  strong convergence \eqref{strongDirecLim} and \eqref{strongetaLim} are now enough to pass to the limit $J_1,J_2\rightarrow0$ so that $J_3$ is identified as the limit of the Laplacian term. 
\subsection*{Conclusion}
By using the converges above, we obtain for all  $(\phi, {\bfphi}, \bm{\psi}) \in C^\infty(\overline{I}\times\omega) \otimes C^\infty(\overline{I} ;C^\infty_{\Div}(\R^2)) \otimes C^\infty(\overline{I}\times\R^2) $ with $\phi(T,\cdot)=0$, ${\bfphi}(T,\cdot)=0$ and $\bfphi\circ\bfvarphi_\eta =\phi {\bn}$ on $I\times\omega$, 
\begin{align*}
\int_I  \frac{\mathrm{d}}{\dt}&\bigg(\int_\omega \partial_t \eta \, \phi \dy
+
\int_{\Oeta}\bu  \cdot {\bfphi}\dx
+
\int_{\Oeta}\bd \cdot \bm{\psi} \dx
\bigg)\dt 
\\
&=
\int_I \int_\omega \big(\partial_t \eta\, \partial_t\phi-\partial_t\naby\eta\,\naby \phi
-
\naby^2\eta\,\naby^2 \phi \big)\dy\dt
\\&+\int_I  \int_{\Oeta}\big(  \bu\cdot \partial_t  {\bfphi} + \bu \otimes \bu: \nabx{\bfphi} 
 -  
\nabx\bu:\nabx {\bfphi} 
+
\nabx\bd \odot  
\nabx\bd:\nabx \bfphi \big) \dx\dt
\\&
+\int_I  \int_{\Oeta}\big( \bd\cdot \partial_t\bm{\psi} 
+
\bu\otimes \bd:\nabx \bm{\psi}
+
\Delx \bd\cdot \bm{\psi} +  \vert\nabx \bd\vert^2 \bd\cdot  \bm{\psi}
\big) \dx\dt.
\end{align*}
This finishes the proof.
\end{proof} 
\section*{Statements and Declarations} 
\subsection*{Funding}
This work has been partly supported by Grant ME 6391/1-1 (543675748) by the German Research
Foundation (DFG).
\subsection*{Author Contribution}
The author wrote and reviewed the manuscript.
\subsection*{Conflict of Interest}
The author declares that they have no conflict of interest.
\subsection*{Data Availability Statement}
Data sharing is not applicable to this article as no datasets were generated
or analyzed during the current study.
\subsection*{Competing Interests}
The author have no competing interests to declare that are relevant to the content of this article.


\end{document}